\newtheorem{theorem}{Theorem}
\newtheorem{lemma}[theorem]{Lemma}
\newtheorem{proposition}[theorem]{Proposition}
\newtheorem{corollary}[theorem]{Corollary}
\theoremstyle{definition}
\newtheorem{definition}[theorem]{Definition}
\theoremstyle{remark}
\newtheorem{remark}[theorem]{Remark}
\newtheorem{example}[theorem]{Example}
\newtheorem{notation}[theorem]{Notation}
\newcommand{\bl}[1]{\overline{#1}}
\newcommand{\mfb}{\mathfrak{b}}
\newcommand{\bj}{{\boldsymbol{j}}}
\newcommand{\balpha}{\boldsymbol{\alpha}}
\newcommand{\cU}{\mathcal{U}}
\newcommand{\cB}{\mathcal{B}}
\newcommand{\cC}{\mathcal{C}}
\newcommand{\tr}{\operatorname{tr}}
\newcommand{\E}{\operatorname{E}}
\newcommand{\bC}{\mathbb{C}}
\newcommand{\bR}{\mathbb{R}}
\newcommand{\cO}{\mathcal{O}}
\let\phi=\varphi
\newcommand{\centre}{\mathaccent"7017}
\newcommand{\Tr}{\operatorname{Tr}}
\newcommand{\cov}{\mathrm{cov}}
\newcommand{\Wg}{\operatorname{Wg}}
\newcommand{\moeb}{\operatorname{M\ddot{o}b}}
\def\CC{\mathbb{C}}
\newcommand{\cA}{\mathcal{A}}
\newcommand{\cP}{\mathcal{P}}
\theoremstyle{definition}
\newcommand{\ab}{\allowbreak}
\def\bR{\mathbb{R}}
\newcommand{\cc}{k}         
\newcommand{\bi}{{\boldsymbol{i}}}
\newcommand{\lra}{\longrightarrow}
\newcommand{\fU}{\mathfrak{U}}
\newcommand{\pn}{ P_2 ( n ) }
\newcommand{\pnn}{ P_2( \pm  n ) }
\newcommand{\cE}{\mathcal{E}}
\title[{Freeness and the Transpose}] {Freeness and The Transposes
  of Unitarily Invariant Random Matrices}
\author[mingo]{James A. Mingo$^{(*)}$} \address{Department
  of Mathematics and Statistics, Queen's University, Jeffery
  Hall, Kingston, Ontario, K7L 3N6, Canada}
\email{mingo@mast.queensu.ca} 
\thanks{$^*$ Research supported by a Discovery Grant from
  the Natural Sciences and Engineering Research Council of
  Canada}
\author[popa]{Mihai Popa$^{( * )(* *)}$ } \address{The
  University of Texas at San Antonio, Department of
  Mathematics, One UTSA Circle, San Antonio, Texas 78249,
  and \newline ${}\hspace{.4cm}{}$ Institute of Mathematics
  ``Simion Stoilow'' of the Romanian Academy, P.O. Box
  1-764, Bucharest, RO-70700, Romania }
\email{Mihai.Popa@utsa.edu} 
\thanks{$^{ * * }$ Research supported by the Natural Science
  Foundation of China Grant No. 11150110456, and the
  Romanian National Authority for Scientific Research, CNCS
  UEFISCDI, Project Number PN-II-ID-PCE-2011-3-0119}
\begin{document}

\begin{abstract}
 We show that real second order freeness appears in the
 study of Haar unitary and unitarily invariant random
 matrices when transposes are also considered. In particular
 we obtain the unexpected result that a unitarily invariant
 random matrix will be asymptotically free from its
 transpose.
\end{abstract}

\maketitle

\section{Introduction}

Free independence, introduced by D.-V. Voiculescu, is an
analogue of classical independence suited for non-commuting
random variables. If two random variables $ a $ and $ b $
are freely independent, there is a universal rule, independent
of $ a $ and $ b $, for computing the mixed moments of $ a $
and $ b $ from just the moments of $ a $ and the moments of
$ b $.  This has been a very useful tool in random
matrix theory as independent ensembles are asymptotically
free, provided that at least one of them is unitarily, or
orthogonally, invariant; see \cite{v1}, \cite{v2},
\cite{agz}, \cite{mss}.

The study of fluctuations of the distribution of eigenvalues
around their mean, for various ensembles of random matrices,
goes back to the work of Dyson (see \cite{d1},
\cite{d2}). These fluctuations can be often described by the
use of fluctuation moments. Second order freeness was
developed by Mingo and Speicher (see \cite{ms}, \cite{mst})
to give a universal rule for computing mixed fluctuation
moments in terms of the individual fluctuation
moments. Random variables are second order free when this
universal rule holds. It was shown, in \cite{mss} that, as
in the case of Voiculescu's freeness, independent ensembles
are asymptotically second order free provided that
at least one of them is unitarily invariant.

In the case of second order freeness differences in the
symmetries of the matrices become apparent.  Emily
Redelmeier \cite{r} showed that for the {\sc goe}, real
Wishart matrices, and real Ginibre matrices a different
universal rule was needed. In the first order case,
i.e.~Voiculescu's freeness, no difference between real and
complex matrices is observed. In the second order case new
terms appeared. Redelmeier called this new universal rule
\textit{real second order freeness}. In \cite{mp} the
authors showed that independent and orthogonally invariant
matrices are asymptotically real second order free. Since
the orthogonal group is a subgroup of the unitary group,
both rules apply in the case of unitarily invariant
ensembles.

In order to resolve the discrepancy between the two rules
the authors were led to examine the role played by the
transpose in random matrix theory.  This lead to the
surprising fact that many ensembles of random matrices are
asymptotically free from their transposes. The goal of this
paper is to show that for unitarily invariant ensembles the
transpose does produce asymptotic freeness (see Corollary
\ref{cor:t-free} and Proposition \ref{prop:31}).

Let us illustrate this with a simple example, see Figure
\ref{fig-1}. Let $U$ be a $N \times N$ Haar distributed
random unitary matrix. Let $U^*$ denote the conjugate
transpose of $U$, and $U^t$ the transpose of $U$. The
eigenvalue distribution of $U + U^*$ converges to the
arcsine law, $\mu_1$, on $[2, -2]$ (i.e. with density
$1/(\pi\sqrt{ 4 - t^2})$). If we take the free additive
convolution of $\mu_1$ with itself we get $\mu_2 = \mu_1
\boxplus \mu_1$, the Kesten-McKay law with two degrees of
freedom. Since the eigenvalue distribution of $U + U^* + (U
+ U^*)^t$ appears to converge to $\mu_2$ are led to expect
that $U + U^*$ is asymptotically free from its transpose. We
shall prove a much more general result, namely one needs the
ensemble to be unitarily invariant.
\begin{figure}
\hfill
\includegraphics[scale=0.5]{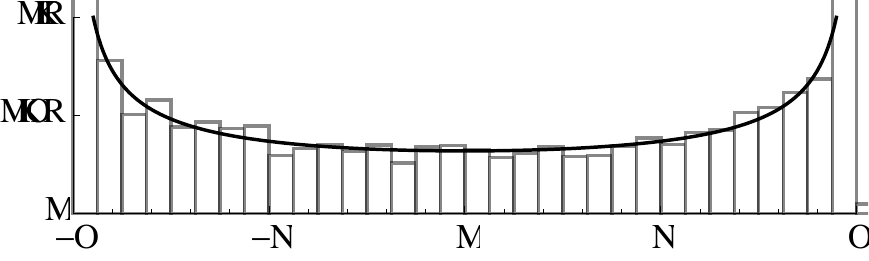}\hfill
\includegraphics[scale=0.4]{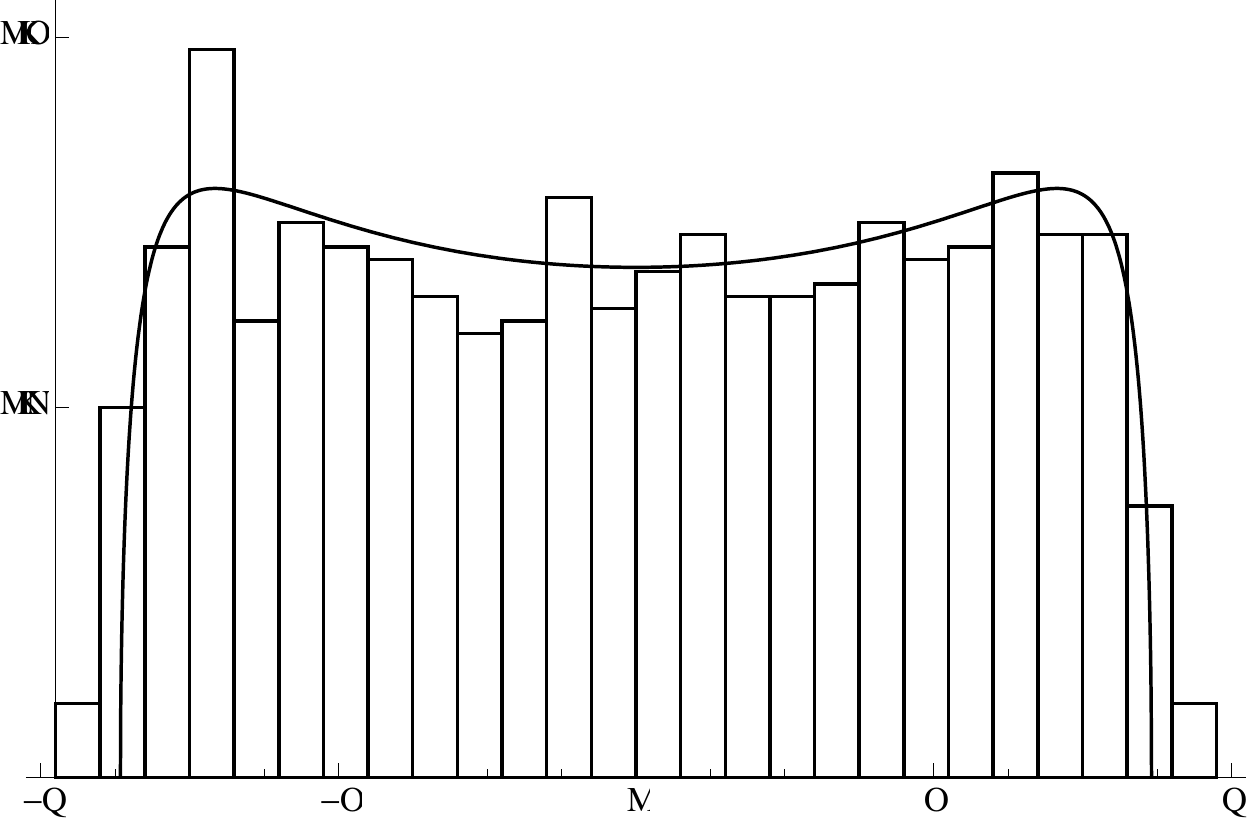}\hfill\hbox{}
\caption{\small \label{fig-1}
 On the left we have sampled $U + U^*$ with
  $U$ a Haar distributed random unitary matrix. One can see
 that the eigenvalue distribution converges to the arcsine
 law (solid line) on the interval $[2, -2]$. On the right
 we have the distribution of the eigenvalues of $U + U^* +
 (U + U^*)^t$. converging to the Kesten-Mckay law with two
  degrees of freedom (solid line).}
\end{figure}
Besides this introduction, the paper contains 3 more
sections: Section 2 states definitions and some preliminary
results, Section 3 addresses the relation between Haar
unitary random matrices and their transposes and Section 4
contains the main results, regarding ensembles of unitarily
invariant random matrices.

\section{Preliminaries}

\subsection{Notations}\label{notations} 
Let $ A $ be a $ N \times N $ matrix, where $ N \geq 2 $. We
will denote its non-normalized trace by $ \Tr(A) $, and its
normalized trace by $ \tr( A ) $, i.e. $ \tr( A) =
\frac{1}{N} \Tr( A) $.  Also we will denote by by $ A^{t} $
the transpose of the matrix $ A $.  We will use the notation
$ A^{( - 1 ) } = A^{ t } $ and $ A ^{ ( 1 )} = A $.  We need
to extend this notation to allow taking entry-wise complex
conjugates. Let $\bl{A} = (A^*)^t$ and for $(\epsilon, \eta)
\in \{-1, 1\}^2$ let $A^{(\epsilon, \eta)}$ be defined as
follows:

\medskip
$\hfill
A^{(1,1)} = A, \hfill
A^{(-1,1)} = A^t, \hfill
A^{(1,-1)} = \bl{A}, \hfill
A^{(-1,-1)} = A^*.
\hfill\hbox{}$
\medskip

\noindent 
The first variable indicates whether or not a transpose is
taken and the second whether or not the complex conjugate of
each entry is taken.  We shall use the notation that
$-(\epsilon, \eta) = (-\epsilon, -\eta)$.  Note that then
$(A^{(\epsilon, \eta)})^* = A^{-(\epsilon, \eta)}$. For a
complex number $z$ let $z^{(1)} = z$ and $z^{(-1)} =
\bl{z}$. Note that this is a slight abuse of notation
because a complex number can also be regarded as a $1 \times
1 $ matrix. However as we are mainly interested large
matrices this will not cause any confusion. Then the $(i,j)$
entry of $A^{(\epsilon, \eta)}$ is $a_{ij}^{(\eta)}$ if
$\epsilon = 1$ and $a_{ji}^{(\eta)}$ if $\epsilon = -1$.

For a positive integer $n$, let $ [ n ] $ be the ordered set
$ \{ 1, 2, \dots, n \} $ and $ \cP ( n ) $ be the set of all
partitions of $ [ n ] $.  By $k_r(X_1, \dots, X_r)$ we mean
the $r^{th}$ classical cumulant of the random variables
$X_1, X_2, \dots, X_r$. Recall from [{\sc ns}, p. 191] that
these are defined implicitly by the moment cumulant relation
\begin{equation}\label{eq:moment-cumulant}
\E(X_1 \cdots X_r) = \sum_{\cU \in \cP(r)} k_\cU(X_1, \dots, X_r)
\end{equation}
where $ \cU \in \cP ( r ) $ and  
\[
  \cc_\cU ( X_1, \dots, X_r ) = \prod_{\substack{B\in
      \cU\\ B = \{ i_1, \dots, i_p\} } } \cc_p ( X_{ i_1 },
  \dots, X_{ i_p } ).
\]

If $\cA$ is a unital algebra and $\cA_1, \cA_2, \dots,
\cA_s$ are unital subalgebras of $\cA$, we will say that a
$n$-tuple $ (a_1, \dots, a_n) $ of elements from $\cA_1 \cup
\cdots \cup \cA_s$ is \emph{alternating} if, for $1 \leq i
\leq n -1 $ we have that $ a_i \in\cA_{j_i} $ with $j_i \neq
j_{i+ 1}$.  If in addition $j_n \not = j_1$, we say that the
$n$-tuple is \emph{cyclically alternating}. If $\phi: \cA
\rightarrow \bC$ is a state, i.e. $\phi$ is linear and
$\phi(1) = 1$, we say $a \in \cA$ is \textit{centered} if
$\phi(a) = 0$.
  
Finally, by $ \mathbb{C}\langle x_1, \dots, x_r \rangle $ we
will denote the algebra of polynomials with complex
coefficients in the non-commuting variables $ x_1, \dots,
x_r $.

\subsection{Second order distributions and second order freeness}${}$

We consider a \textit{non-commutative probability space}
$(\cA, \phi)$, that is $\cA$ is a unital algebra over $\bC$
and a state $\phi: \cA \rightarrow \bC$. For $x_1, \dots,
x_n \in \cA$ the mixed moments of $x_1, \dots, x_n$ are the
values $\phi(x_{i_1} \cdots x_{i_k})$ where $1 \leq i_l \leq
n$ for $1 \leq l \leq k$. The elements $x_1, \dots, x_n$ are
\textit{free} if the the unital subalgebras $\cA_1, \dots,
\cA_n$ are free where $\cA_i$ is the unital subalgebra
generated by $x_i$. Unital subalgebras $\cA_1, \dots, \cA_s$
of $\cA$ are \textit{free} if whenever we have centered and
alternating elements $a_1, \dots, a_n \in \cA_1 \cup \cdots
\cup \cA_s$, we have $\phi(a_1 \cdots a_n) = 0$. This gives
a universal rule for computing the mixed moments
$\phi(x_{i_1} \cdots x_{i_k})$ in terms of the moments of
each $x_i$. See \cite[Examples 5.15]{ns}.  We shall often
call this freeness property (introduced by of Voiculescu,
\cite{v1}) \textit{first order freeness} to distinguish it
from that of second order freeness given below.

A \textit{second order non-commutative probability space},
$(\cA, \phi, \phi_2)$, is a non-commuta\-tive probability
space $(\cA, \phi)$ with extra structure. We first require
that $\phi$ be a tracial state, i.e. $\phi(ab) = \phi(ba)$
and second the existence of a bilinear functional $\phi_2$
that gives the fluctuation moments. This means $\phi_2: \cA
\times \cA \rightarrow \bC$ is a bilinear functional,
tracial in each variable, is such that $\phi_2(1, a) =
\phi_2(a, 1) = 0$ for all $a \in \cA$. For $x_1, \dots, x_s
\in \cA$ the mixed fluctuation moments of $x_1, \dots, x_s$
are the values $\phi_2(x_{i_1}\cdots x_{i_m}, x_{j_1} \cdots
x_{j_n})$ with $1 \leq i_k \leq s$ and $1 \leq j_l \leq
s$. Second order freeness gives a universal rule for
computing the mixed fluctuation moments in terms of the
moments and the fluctuation moments of each $x_i$. See
\cite[Def. 6.3]{ms}, where $\phi_2$ is denoted by $\rho$.

As with first order freeness, we say that random variables
$x_1, \dots, x_s$ are second order free if the unital
subalgebras $\cA_1, \dots, \cA_s$ they generate are second
order free. Unital subalgebras $\cA_1, \dots, \cA_s$ are
second order free if they are free of first order, and
satisfy the following property. Given centered
elements $a_1, \dots a_m, b_1, \dots b_n \in \cA_1 \cup
\cdots \cup \cA_s$ such that both $a_1, \dots, a_m$ and
$b_1, \dots, b_n$ are cyclically alternating we have a rule
for computing the fluctuation moments $\phi_2(a_1 \cdots
a_m, b_1 \cdots b_n)$ in terms of the moments
$\phi(a_ib_j)$. Namely for $m,n \geq 1$ with $m + n > 2$
\begin{equation}\label{eq:secondorderfreeness}
\phi_2(a_1 \cdots a_m, b_1 \cdots b_n) = \delta_{m,n}
\sum_{k=1}^n \prod_{i=1}^n \phi(a_i b_{k-i})
\end{equation}
where $k-i$ is interpreted modulo $n$. When $m = n = 1$ we
also require $\phi_2(a_1, b_1) = 0$ if $k_1 \not = l_1$. The
expression on the right hand side of
(\ref{eq:secondorderfreeness}) has a simple interpretation
in terms of \textit{spoke diagrams}. We put the points
$a_1$, $a_2, \dots, a_m$ around a circle in clockwise order
and the points $b_1$, $b_2, \dots, b_n$ in counter-clockwise
order around an inner circle (see
Fig. \ref{fig-2}). Equation (\ref{eq:secondorderfreeness})
says that the universal rule for computing $\phi_2(a_1a_2
\cdots a_m, b_1b_2\cdots b_n)$ is the sum over all possible
spoke diagrams. If $m \not = n$ then no spoke diagrams are
possible and the sum is $0$. If $m = n =1$ then there is no
reduction unless $a_1$ and $b_1$ come from different
subalgebras. When $m = n \geq 2$, the fluctuation moments
can be computed from certain first order moments.  Note that
equation (\ref{eq:secondorderfreeness}) also means that if a
spoke connects an $a_i$ to a $b_j$ and $a_i$ and $b_j$ do
not come from the same subalgebra then $\phi(a_ib_j) = 0$,
since we have assumed that $a_1, \dots, a_m$ and $b_1,
\dots, b_n$ are centered. Hence in
(\ref{eq:secondorderfreeness}) only spoke diagrams which
connect elements from the same subalgebra make a non-zero
contribution.  When $m = n = 3$ the three terms
corresponding to the three \textit{spoke} diagrams are shown
in Figure \ref{fig-2}.

%
\setbox1=\hbox{\small
$\phi(a_1b_1) \phi(a_2b_3) \phi(a_3 b_2)$}
\setbox2=\hbox{\small
$\phi(a_1b_2) \phi(a_2b_1) \phi(a_3 b_3)$}
\setbox3=\hbox{\small
$\phi(a_1b_3) \phi(a_2b_2) \phi(a_3 b_1)$}

\begin{figure}
\noindent 
\vbox{\hsize\wd1\parindent0pt%
\hfill\includegraphics{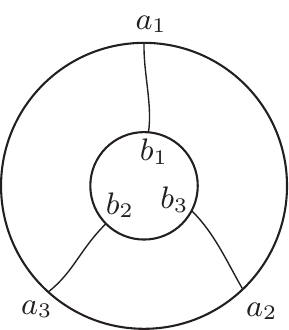}\hfill\hbox{}\\
\box1} \hfill
\vbox{\hsize\wd2\parindent0pt%
\hfill\includegraphics{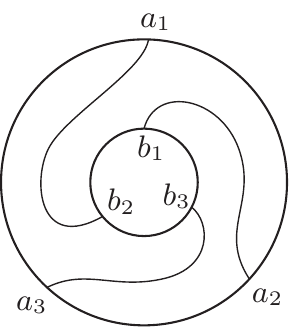}\hfill\hbox{}\\
\box2} \hfill
\vbox{\hsize\wd3\parindent0pt%
\hfill\includegraphics{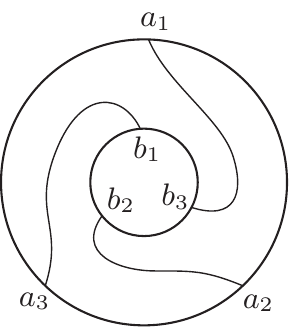}\hfill\hbox{}\\
\box3}
\caption{\small The three spoke diagrams needed by
 equation~(\ref{eq:secondorderfreeness}) to evaluate
  $\phi_2(a_1a_2a_3, b_1b_2b_3)$.}\label{fig-2}
\end{figure}

A \textit{real second order non-commutative probability
  space}, $(\cA, \phi, \phi_2, {}t)$, is a non-commuta\-tive
probability space $(\cA, \phi, \phi_2)$ with a transpose
$a\mapsto a^t$ (a linear map with $(ab)^t = b^t a^t$). We
assume that $\phi(a^t) = \phi(a)$ and $\phi_2(a^t, b) =
\phi_2(a, b^t) = \phi_2(a, b)$.

We say that random variables $x_1, \dots, x_s$ in a real
second order non-commutative probability space, $(\cA, \phi,
\phi_2, t)$, are \textit{real second order free} if the
unital subalgebras $\cA_1, \dots, \cA_s$ they generate are
real second order free. Unital subalgebras $\cA_1, \dots,
\cA_s$ are \textit{real second order free} if they are free
of first order and satisfy the following extension of
equation (\ref{eq:secondorderfreeness}). Given centered
elements $a_1, \dots a_m, b_1, \dots b_n \in \cA_1 \cup
\cdots \cup \cA_s$ such that both $a_1, \dots, a_m$ and
$b_1, \dots, b_n$ are cyclically alternating we have a rule
for computing the fluctuation moments $\phi_2(a_1 \cdots
a_m, b_1 \cdots b_n)$ in terms of the moments $\phi(a_ib_j)$
and $\phi(a_ib_j^t)$. Namely for $m,n \geq 1$ such that
 $m + n >2$
\begin{eqnarray}\label{eq:realsecondorderfreeness}\lefteqn{
\phi_2(a_1 \cdots a_m, b_1 \cdots b_n) } \\
& = & 
\delta_{m,n} \sum_{k=1}^n \prod_{i=1}^n \phi(a_i b_{k-i}) +
\delta_{m,n} \sum_{k=1}^n \prod_{i=1}^n \phi(a_i
b_{k+i}^t)\qquad \notag
\end{eqnarray}
where $k+i$ and $k-i$ are interpreted modulo $n$. When $m =
n = 1$ we also require $\phi_2(a_1, b_1) = 0$ if $k_1 \not =
l_1$. The right hand side of
(\ref{eq:realsecondorderfreeness}) also has an
interpretation in terms of spoke diagrams. The first term is
the same as for second order freeness. The second term on
the right hand side also has an interpretation in terms of
\textit{reversed spoke diagrams}.  We put the points $a_1$,
$a_2, \dots, a_m$ around a circle in clockwise order and the
points $b_1^t$, $b_2^t, \dots, b_n^t$ in clockwise order
around an inner circle (see Fig. \ref{fig-3}). Equation
(\ref{eq:realsecondorderfreeness}) says that the universal
rule for computing $\phi_2(a_1a_2\cdots a_m, b_1b_2\cdots
b_n)$ is the sum of terms corresponding to the
\textit{spoke} diagrams in Figure \ref{fig-2} \textit{plus}
the additional terms coming from the reversed spoke diagrams
in Fig. \ref{fig-3}.

When these universal rules only hold asymptotically we have
what is called \textit{asymptotic freeness}.

\setbox1=\hbox{\small
$\phi(a_1b_1^t) \phi(a_2b_2^t) \phi(a_3 b_3^t)$}
\setbox2=\hbox{\small
$\phi(a_1b_3^t) \phi(a_2b_1^t) \phi(a_3 b_2^t)$}
\setbox3=\hbox{\small
$\phi(a_1b_2^t) \phi(a_2b_3^t) \phi(a_3 b_1^t)$}

\begin{figure}
\noindent
\vbox{\hsize\wd1\parindent0pt%
\hfill\includegraphics{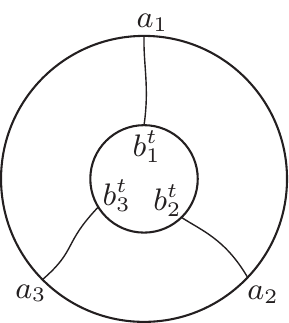}\hfill\hbox{}\\
\box1} \hfill
\vbox{\hsize\wd2\parindent0pt%
\hfill\includegraphics{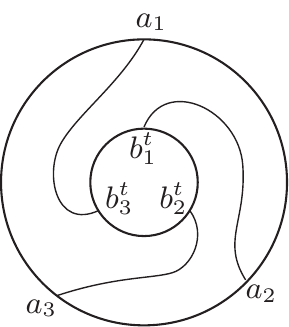}\hfill\hbox{}\\
\box2} \hfill
\vbox{\hsize\wd3\parindent0pt%
\hfill\includegraphics{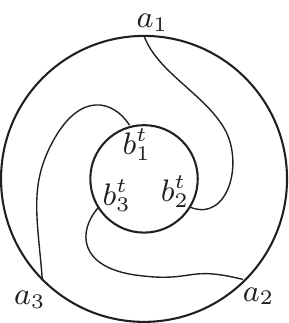}\hfill\hbox{}\\
\box3}
\caption{\small The three reversed spoke diagrams needed
  by equation~(\ref{eq:realsecondorderfreeness}) to evaluate
  $\phi_2(a_1a_2a_3, b_1b_2b_3)$ in the case of real second
 order freeness.}\label{fig-3}
\end{figure}

\begin{definition} \label{defn:1}

Suppose that for each $ N $ we have a set of $ s $ $ N
\times N $ random matrices $ \{ A_{ N, 1 }, A_{ N, 2 },
\dots A_{ N, s } \} $ and denote by $ \cA $ the ensemble $
\coprod_{ N \geq 1 } \{ A_{ N, 1 }, A_{ N, 2 }, \dots A_{ N,
  s } \} $ and by $\cA^t$ the ensemble $\coprod_{ N \geq 1 }
\{ A^t_{ N, 1 }, A^t_{ N, 2 },\ab \dots A^t_{ N, s } \} $.

\medskip\noindent(\textbf{1}) We say that the ensemble $ \cA
$ has the \emph{bounded cumulants property} if for all
polynomials $ p_1, p_2, p_3, \dots $ from the algebra $
\mathbb{C} \langle x_1, x_2, \dots, x_s \rangle $ and for
integers $ k \geq 1 $, denoting
\[  
W_{ N, k } = p_k ( A_{ N, 1 }, A_{ N, 2 }, \dots A_{ N, s }
)
\]
we have that:
\begin{enumerate}

\item[(\textit{i})]  
$\displaystyle \sup_{N}| \E( \tr( W_{N,i} )
  ) | < \infty $, for all $i$

\item[(\textit{ii})] $\displaystyle \sup_{N}| \cc_r ( \Tr (
  W_{N,1} ), \Tr ( W_{N,2} ), \dots, \Tr ( W_{N,r} ) ) | <
  \infty $ for all $ r \geq 2 $.
\end{enumerate}
If the ensemble $\cA \cup \cA^t$ has the bounded cumulants property,
we say that $\cA$ has  the \textit{$ t $-bounded cumulants property}.

\medskip\noindent
(\textbf{2}) We say that the ensemble $ \cA $ has a
\emph{second order limit distribution}, if, in the
definition above, conditions (\textit{i}) and (\textit{ii})
are replaced by
\begin{enumerate} 

\item[(\textit{i}$^\prime$)] 
$\displaystyle \lim_{ N \rightarrow \infty }| \E( \tr(
  W_{N,i} ) ) | $\ exists and it is finite for all $i$.

\item[(\textit{ii}$'$)]
$\displaystyle \lim_{N \rightarrow \infty} \cc_2 ( \Tr(
  W_{N,1}), \Tr(W_{N,2} )) $\ exists and it is finite.

\item[(\textit{iii}$'$)] 
$\displaystyle \lim_{ N \rightarrow \infty } \cc_r (
  \Tr(W_{N,1}), \ldots, \Tr(W_{N,r}) ) = 0 $\ for all $ r
  \geq 3 $.

\end{enumerate}
If $\cA \cup \cA^t$ has a second order limit distribution,
we say that $\cA$ has a \textit{second order limit
  $t$-distribution}. If only the condition ( $ i ^ \prime $ ) is satisfied, 
then  the ensemble $ \cA $ will be said to have a \emph{limit distribution}. 

\end{definition}

\begin{example}  
It is easy to construct ensembles of random matrices with
a second order limit distribution, but no second order limit
$t $-distribution (see also \cite[\S5]{cs}). Consider $ \cA
= \{ A_N \}_{ N \geq1} $, with

\[ 
A_N = [ i^{ k } \cdot \delta_{ l - k, 1 } ]_{ k, l =1 }^ N,
\]
i.e. all entries of $ A_N $ are zero, except for the $ ( k,
k+1) $-entries (on the diagonal above the main one), which
are equal to $ i^{ k } $ (here $ i $ is the complex number,
$ i^2 = -1 $ ).  Then $ \tr( A_N^p ) = 0$ for all positive
integers $ p$, but $ \lim_{ N \rightarrow \infty } \tr( A_N
\cdot A_N^t ) $ does not exist.
\end{example}

Throughout the rest of the paper, since all matrices
appearing in an expression will always have the same
dimension, we will omit the first index, $N$, when writing
matrices from an ensemble, i.e. we will write $ A_1, A_2,
A_2 $ for $ A_{N, 1}, A_{N, 2}, A_{N, 3} $.

\begin{definition}\label{defin:03}
Let $ \cA = \{ A_1, \dots, A_s \}_{ N \geq 1 } $ and $ \cB
=\{ B_1, \ldots, B_p \}_{ N \geq 1 } $ be two ensembles of
random matrices.

(\textit{i}) We say that $ \cA $ and $ \cB$ are\emph{
  asymptotically free} if $ \cA \cup \cB $ satisfies
condition (\textit{i}$'$) from Definition \ref{defn:1}, and
if for any positive integer $n $ and any centered
alternating $n$-tuple $ (D_1, D_2, \dots, D_n) $ from $ \cA
$ and $ \cB $, i.e. each $D_i$ is a polynomial in the
matrices from one of the ensembles; , adjacent elements come
from different ensembles, and $ \E(\Tr( D_l )) =0 $ for all
$ l $; we have that
\[ 
\lim_{  N \rightarrow \infty} \E(\tr ( D_1 D_2 \cdots D_n )) =
0.
\]

(\textit{ii}) We say that $ \cA $ and $ \cB$ are\emph{
  asymptotically complex second order free} if $ \cA $ and
$\cB$ are asymptotically free, $ \cA \cup \cB $ has a second
order limit distribution and if, for all $ m, n \geq 1$ with
$m + n > 2$ and all centered and cyclically alternating
tuples $ (C_1, \ldots, C_m )$, $ ( D_1, \ldots, D_n )$ of
polynomials from $ \cA $ and $ \cB $, i.e. cyclically
adjacent elements come from different ensembles, and $
\E(\Tr( C_j)) = \E(\Tr( D_l )) =0 $ for all $j$ and $l$, we
have that
\begin{eqnarray*}\lefteqn{
\lim_{N \rightarrow \infty} \{ \mathrm{cov}(\Tr( C_1 \cdots
C_m),\Tr( D_1 \cdots D_n ) )} \\ && \mbox{} -
\delta_{m, n }\cdot \sum_{ k
  = 1}^n \prod_{ i =1}^n \E(\tr( C_i \cdot D_{ k-i })) \} = 0
\end{eqnarray*}
where $k-i$ is interpreted modulo $n$.

\medskip
(\textit{iii}) We say that $ \cA $ and $ \cB$ are\emph{
  asymptotically real second order free} if $ \cA $ and $
\cB $ are asymptotically free, $ \cA \cup \cB $ has a second
order limit $t$-distribution and if, for all $ m, n \geq 1$
with $m + n > 2$ and all cyclically alternating tuples $ (
C_1, \ldots, C_m )$, $ ( D_1, \ldots, D_n )$ of polynomials
alternating between $ \cA $ and $ \cB $, i.e. cyclically
adjacent elements come from different ensembles, such that $
\E(\Tr( C_j)) = \E(\Tr( D_l )) =0 $ ( $ 1 \leq j \leq m $, $
1 \leq l \leq n $ ), we have that
\begin{eqnarray}\lefteqn{
\lim_{N \rightarrow \infty} 
\{ \cov(\Tr( C_1 \cdots C_m), 
        \Tr( D_1 \cdots D_n ) )\label{eq:02}} \\ 
&&\mbox{} - 
\delta_{m, n} \cdot 
\sum_{ k =1}^n [ \prod_{ i =1}^n \E(\tr( C_i \cdot
  D_{ k -i } )) + \prod_{ i =1}^n \E(\tr( C_i\cdot D_{ i - k }^t
  )) ] \} = 0 \notag
\end{eqnarray}
where again, $k-i$ and $i-k$ are interpreted modulo $n$. 
\end{definition}
It was shown in \cite{mp}, Proposition 29, that the
relations above are associative for the subalgebras
generated by the random variables.

\begin{remark}\label{rem:eqn_4}
In \cite{ms} the property (\textit{ii}) was called second
order freeness as only  the complex case was considered. To
emphasize the difference from real second order freeness we
shall in this paper call (\textit{ii}) complex second order
freeness.

If we have ensembles $\cA$ and $\cB$ such that for all $ m,
n \geq 1$ with $m + n > 2$ and all cyclically alternating
tuples $ ( C_1, \ldots, C_m )$, $ ( D_1, \ldots, D_n )$ of
polynomials from $ \cA $ and $ \cB $, i.e. cyclically
adjacent elements come from different ensembles, such that $
\E(\Tr( C_j)) = \E(\Tr( D_l )) =0 $ ( $ 1 \leq j \leq m $, $
1 \leq l \leq n $ ), we have that equation(\ref{eq:02})
holds we shall say that $\cA$ and $\cB$ \textit{satisfy
  equation} (\ref{eq:02}).
\end{remark}

\begin{proposition} \label{prop:ord}
Let $ \{ A_{1, N}, A_{2, N}, \dots, A_{ m, N },   B_{1, N}, \dots, B_{n, N} \}_N $ be an ensemble of 
 random matrices with the bounded cumulants property
and such that $ \E( \Tr ( B_ l ) ) = 0 $ for all $
1\leq l \leq n $. Then:
\begin{enumerate}

\item[(\textit{i})]
$\E( \Tr( A_1 ) \cdots \Tr( A_m ) ) = \E( \Tr( A_1 )) \cdots
  \E ( \Tr( A_m ) ) + O( N^{ m - 1 } ); $ \text{\emph{in
      particular}}, $ \E( \Tr( A_1 ) \cdots \Tr( A_m ) ) =
  O( N^{ m } ) $.

\item[(\textit{ii})] 
$\E( \Tr( A_1 ) \cdots \Tr( A_m ) \Tr( B_1) \cdots \Tr(B_n)
  ) $

\item[]
\hspace{1cm}$ = \E( \Tr( A_1 ) \cdots \Tr( A_m )) \cdot \E(
\Tr( B_1) \cdots \Tr(B_n) ) + O( N^{ m - 1 } ) $

\item[]
\hspace{7cm}$ = O( N^{m } ).$ \\
In particular, all  cumulants with entries both from 
 $ \{\Tr( A_1 ), \dots,\ab \Tr( A_m ) \} $ 
and from $ \{ \Tr(B_1), \dots, \Tr(B_n) \}$ do vanish asymptotically.
\end{enumerate}
\end{proposition}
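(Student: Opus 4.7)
The plan is to expand both expectations via the moment--cumulant formula \eqref{eq:moment-cumulant} and read off orders from the bounded cumulants hypothesis, which supplies $\E(\Tr(A_i)) = O(N)$ and $k_r(\Tr(W_1), \dots, \Tr(W_r)) = O(1)$ for $r \geq 2$ and every polynomial in the ensemble.

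For part \textit{(i)}, I would write
\[
\E(\Tr(A_1) \cdots \Tr(A_m)) = \sum_{\cU \in \cP(m)} k_\cU(\Tr(A_1), \dots, \Tr(A_m)),
\]
and isolate the partition consisting of all singletons, whose $k_\cU$ is exactly $\prod_{i=1}^m \E(\Tr(A_i))$. Any other partition possesses at least one block of size $\geq 2$, hence has at most $m - 2$ singletons; each singleton contributes an $O(N)$ factor and each non-singleton block an $O(1)$ cumulant factor. Summing over the finite family of such partitions yields an $O(N^{m-2}) \subseteq O(N^{m-1})$ error, which gives \textit{(i)}.

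For part \textit{(ii)}, apply the same expansion to the joint moment with $\cU \in \cP(m+n)$. The hypothesis $\E(\Tr(B_l)) = 0$ annihilates any $\cU$ having a singleton on the $B$-side, since $k_1(\Tr(B_l)) = 0$. The remaining partitions split into two types. The \emph{split-respecting} partitions $\cU = \cU_A \sqcup \cU_B$ (no block mixing $A$- with $B$-indices) contribute in total
\[
\Bigl(\sum_{\cU_A \in \cP([m])} k_{\cU_A}\Bigr) \Bigl(\sum_{\cU_B \in \cP([n])} k_{\cU_B}\Bigr) = \E(\Tr(A_1) \cdots \Tr(A_m)) \cdot \E(\Tr(B_1) \cdots \Tr(B_n)).
\]
A \emph{mixed} partition, by definition, possesses at least one block containing both an $A$- and a $B$-index. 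Each such block is non-singleton and consumes at least one $A$-index, so combined with the absence of $B$-singletons the total number of singletons (necessarily all among the $A$-indices) is at most $m - 1$. Each mixed partition thus contributes $O(N^{m-1})$, and the finite sum is $O(N^{m-1})$. This yields the first equality in \textit{(ii)}. The bound $O(N^m)$ then follows from part \textit{(i)} together with the observation that $\E(\Tr(B_1) \cdots \Tr(B_n)) = O(1)$: singleton blocks on the $B$-side vanish, leaving only partitions of $[n]$ with all blocks of size $\geq 2$, each producing a bounded product of cumulants.

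The closing sentence is a reinterpretation of the mixed case: cumulants with entries taken from both $\{\Tr(A_i)\}_i$ and $\{\Tr(B_l)\}_l$ correspond exactly to the mixed blocks in the expansion, whose combined contribution is of lower order than the leading $O(N^m)$ and in this sense vanishes asymptotically relative to the main term. The only real bookkeeping obstacle is the singleton-counting argument in the mixed case that pins down the exponent $m - 1$; once this is settled the remainder is a mechanical application of the moment--cumulant formula together with the boundedness estimates.
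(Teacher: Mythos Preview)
Your argument is correct and follows essentially the same route as the paper: both proofs expand via the moment--cumulant relation, separate the partitions that respect the $A$/$B$ split (the paper writes this as $\pi \leq \tau$ for the two-block partition $\tau$) from the mixed ones (the paper's $\pi \vee \tau = 1_{m+n}$), and bound each term by $N$ raised to its number of $A$-singletons. Your singleton count in the mixed case---at most $m-1$ because a mixing block consumes an $A$-index and $B$-singletons vanish---is exactly the paper's mechanism, and your $O(N^{m-2})$ in part~(\textit{i}) is in fact a hair sharper than what the paper records.
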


\begin{proof}
For (\textit{i}), using the moment-cumulant expansion from
\cite{ns}, we have that
\[
\E( \Tr( A_1 ) \cdots \Tr( A_m ) ) = \sum_{ \pi \in \cP(m) }
\cc_{\pi}( \Tr( A_1 ), \dots, \Tr( A_m ) ).
\]

Since all cumulants of order higher than 2 are bounded, it
follows that $ \cc_{\pi}( \Tr( A_1 ), \dots, \Tr( A_m ) )=
O(N^{ \# \text{of singletons of } \pi } ) $.  But elements
of $ \cP(m)$ have at most $ m $ singletons, with equality
for $ 0_m $, this proves (\textit{i}).

For (\textit{ii}), let $\tau \in \cP(m+n)$ be the partition
with two blocks $(1, \dots, m)$ and $(m+1, \dots,
m+n)$. Note first that the moment-cumulant expansion gives
\begin{eqnarray*}\lefteqn{
\E( \Tr( A_1 ) \cdots \Tr( A_m ) \Tr( B_1) \cdots \Tr(B_n) )} \\
&=& 
\sum_{ \pi \in P( m+n ) } \cc_\pi( \Tr( A_1 ), \dots,
  \Tr(B_n) ) \\
  & = &
\mathop{\sum_{ \pi \in P( m+n ) }}_{\pi < \tau}
\cc_\pi( \Tr( A_1 ), \dots,
  \Tr(B_n) ) \\   
&& \mbox{} + 
\mathop{\sum_{ \pi \in P( m+n ) }}_{\pi \vee \tau = 1_{m+n}}
\cc_\pi( \Tr( A_1 ), \dots,
  \Tr(B_n) ) \\  
  & = &
\E(\Tr(A_1) \cdots \Tr(A_m)) \E(\Tr(B_1) \cdots \Tr(B_n)) \\
&& \mbox{} +  
\mathop{\sum_{ \pi \in P( m+n ) }}_{\pi \vee \tau = 1_{m+n}}
\cc_\pi( \Tr( A_1 ), \dots,
  \Tr(B_n) ). \\ 
\end{eqnarray*}
All cumulants of order higher than 1 are bounded; moreover,
for all indices $ l $, we have that $ k_1( \Tr( B_l ) ) =
0$, since the $ B_l $'s are assumed to be centered.
Let $s$ be the number if singletons of $\pi|_{[m]}$. Then
$s \leq m$ and so
\[
 \cc_\pi( \Tr( A_1 ), \dots, \Tr(B_n))= O( N^s).
\]

But $s = m $ only when $ \pi|_{[m]} = 0_m = \{ (1), \dots,
(m ) \}$, which cannot happen when $\pi \vee \tau =
1_{m+n}$. Thus
\[
\mathop{\sum_{ \pi \in P( m+n ) }}_{\pi \vee \tau = 1_{m+n}}
\cc_\pi( \Tr( A_1 ), \dots,
  \Tr(B_n) ) = O(N^{m-1}).
\] 
This proves (\textit{ii}).
\end{proof}

\begin{remark}\label{rem:ord}
Suppose that $ \sigma \in S_m $ has the cycle decomposition
$ \sigma= c_1\cdot c_2 \cdots c_r $ with each $ c_k = (
i_{k, 1}, i_{ k, 2 }, \dots, i_{ k, s( k) } ) $. For $ A_j
$'s and $ B_l $'s as in Proposition \ref{prop:ord}, denote
\[
\Tr_\sigma (A_1, \dots, A_m) = \prod_{ k=1}^r \Tr( A_{ i_{
    k, 1} } \cdots A_{ i_{ k, s( k ) } } ).
\]
With this notation, Proposition \ref{prop:ord} implies that
\[
 \E( \Tr_\sigma ( A_1 A_2 \cdots A_m ) ) = O( N^r ),
\]
respectively
\begin{align*}
 \E( \Tr_\sigma ( A_1 A_2 \cdots A_m )& \Tr(B_1) \cdots
 \Tr(B_n) ) \\ 
& = 
\E( \Tr_\sigma ( A_1 A_2 \cdots A_m
 ))\cdot \E ( \Tr(B_1) \cdots \Tr(B_n) ).
\end{align*}

\end{remark}

\subsection{Combinatorial results}\label{s22}

For $ n $ a positive integer, let, as in Section
\ref{notations}, $ [ n ] =\{ 1, 2, \dots, n \}$: denote $[ -
  n ] = \{ - n, - n +1, \dots, -1 \} $ and $ [ \pm n ] = [ -
  n ] \cup [ n ]$.  As in \cite{mp}, we will regard a
permutation $ \sigma $ on $ [ n ]$ also as a permutation on
$[ \pm n ] $ where for $ 1 \leq k \leq n $ we let $ \sigma (
- k ) = - k $.

The sets of permutations on $ [ n ] $, respectively $ [ \pm
  n ] $ such that each one of their cycles has exactly 2
elements will be denoted by $ \pn$, respectively $ \pnn $ (
if $ n $ is odd, then by convention $ \pn = \emptyset $).
It was shown in \cite{mp} that if $ p, q \in \pnn $, then $
pq $ has a cycle decomposition $c_1 c_1' \dots c_l, c_l'$
with the property that for all $ k $, if $ c_k = ( i_1, i_2,
\dots, i_l )$, then $ c_k^\prime = ( q( i_l ), q ( i_{ l-1
}), \dots, q( i_1 ) ) $.

We will denote by $ \delta $ the element of $ \pnn $ given
by $ \delta( k ) = - k $.  If $ \sigma $ is a permutation on
some finite set, the number of cycles of $ \sigma $ will be
denoted by $ \#( \sigma ) $.

For a multi-index $\bi\in [ N ]^{ [ \pm n ] }$, and $ p \in
P_2 ( \pm n) $ we write $ \bi =\bi \circ p $ to mean that
for every $ (r, s) \in p$ we have $ i(r) = i(s) $.  Consider
now $ A = (a_{ i j } )_{ i, j =1 }^N $ to be a $ N \times N
$ random matrix; we will use the notation from Section
\ref{notations}, namely $A^{(1)} = A$ and $A^{(-1)} = A^t$.
For $\epsilon = (\epsilon_1, \epsilon_2, \dots, \epsilon_n)
\in \{ -1, 1 \}^n$ and $\pi \in S_n$, let $\Tr_{(\pi,
  \epsilon)}(A_1, \dots, A_n) = \Tr_\pi(A^{(\epsilon_1)}_1,
\dots, A^{(\epsilon_n)}_n)$

We saw that the cycle decomposition of $p\delta$ may be
written $c_1 {c_1}' \cdots\ab c_s {c_s}'$ where ${c_i}' =
\delta c_i^{-1} \delta$.  Since $c'' = c$, it is arbitrary
which of the pair $\{ c_i, {c_i}'\}$ is called $c_i$ and
which ${c_i}'$.

For each $i$, choose a representative of each pair $\{ c_i,
{c_i}'\}$, say $c_1, c_2, \dots,\ab c_s$.  We shall
construct a permutation $\pi( p ) \in S_n$ and $\epsilon \in
\{ -1, 1 \}^n$ which will depend on our choice of $\{c_1,
\dots, c_s\}$.  For each $i$ we construct a cycle $\tilde
c_i$ as follows. Suppose $c_i = ( l_1, \dots, l_r)$.  Let
$\tilde c_i = (j_1, j_2, \dots, j_r)$ where $ j_k = | l_k |
$ and $ \epsilon_{j_k} = { l_k }/{ | l_k | } $.
Define $\pi (p ) = \tilde c_1 \cdots \tilde c_s$ and
$\epsilon = (\epsilon_1, \dots, \epsilon_n)$.

The following result was proved in \cite{mp}, Lemma 5.
\begin{lemma}\label{lemma:pi_epsilon} 
With the notations above, we have that
\[
\mathop{\sum_{1_{\pm 1}, \dots, i_{\pm n}=1}}_{i = i \circ
  p}^N a^{(1)}_{i_1 i_{ -1 }} a^{(2)}_{i_2 i_{ - 2 } }
\cdots \cdots a^{(n)}_{i_{ n}i_{ - n}} = \Tr_{ \pi ( p ) }
(A^{(\epsilon_1)}_1, \dots, A^{(\epsilon_n)}_n).
\]
\end{lemma}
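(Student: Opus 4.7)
The plan is to show that both sides equal the same reorganized sum of matrix entries, with the reorganization dictated by the cycles $c_1,\ldots,c_s$ chosen in the construction of $\pi(p)$. The pointwise identity driving everything is that, for any $k\in[n]$ and any $l\in\{+k,-k\}$,
\[
(A_k)_{i_k,i_{-k}} \;=\; \bigl(A_k^{(\sgn(l))}\bigr)_{i_l,i_{-l}},
\]
which is a tautology when $l=+k$ and is the definition of transpose when $l=-k$. This is what lets me re-encode each factor of the LHS using whichever of $+k$ or $-k$ actually appears in the chosen representative cycle.

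Because scalars commute, I reorder the product $\prod_{k=1}^n (A_k)_{i_k,i_{-k}}$ so that factors indexed by a common $c_i$ appear consecutively. The supports of the representatives $c_1,\ldots,c_s$ contain exactly one of $\{+k,-k\}$ for each $k\in[n]$: the supports of $c_i$ and $c_i'=\delta c_i^{-1}\delta$ differ by negation, and together the pairs cover all of $[\pm n]$. So writing $c_i=(l_1,\ldots,l_{r_i})$ and $\epsilon_{|l_j|}=l_j/|l_j|$, the LHS becomes
\[
\sum_{i=i\circ p}\;\prod_{i=1}^{s}\prod_{j=1}^{r_i}\bigl(A_{|l_j|}^{(\epsilon_{|l_j|})}\bigr)_{i_{l_j},i_{-l_j}},
\]
and the pair $(\pi(p),\epsilon)$ constructed in the excerpt is precisely what appears here.

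To finish, I translate the constraint $i=i\circ p$ into cycle closure. For consecutive $l_j,l_{j+1}$ in $c_i$, the equation $p\delta(l_j)=l_{j+1}$ is exactly $(-l_j,l_{j+1})\in p$, which forces $i_{-l_j}=i_{l_{j+1}}$ (indices mod $r_i$). Thus within each cycle the column index of factor $j$ matches the row index of factor $j+1$, and summing over the free variables $i_{l_1},\ldots,i_{l_{r_i}}$ yields
\[
\Tr\bigl(A_{|l_1|}^{(\epsilon_{|l_1|})}\cdots A_{|l_{r_i}|}^{(\epsilon_{|l_{r_i}|})}\bigr),
\]
which is the factor of $\Tr_{\pi(p)}(A_1^{(\epsilon_1)},\ldots,A_n^{(\epsilon_n)})$ corresponding to the cycle $\tilde c_i$. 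Taking the product over $i=1,\ldots,s$ gives the claimed equality.

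The main bookkeeping hurdle is keeping track of how a single $p$-block $\{l_{j+1},-l_j\}$ straddles the pair $\{c_i,c_i'\}$: one member of the block lies in $\operatorname{supp}(c_i)$ and the other in $\operatorname{supp}(c_i')$, yet the equal-index constraint it imposes is precisely what closes the $c_i$-trace and, symmetrically, the $c_i'$-trace. As a consistency check, replacing $c_i$ by $c_i'$ reverses the cycle and negates all signs on its support, and the two choices give the same trace via $\Tr(B_1\cdots B_r)=\Tr(B_r^t\cdots B_1^t)$, so the identity is independent of the choice of representatives.
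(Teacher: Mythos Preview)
Your proof is correct. The paper does not actually prove this lemma; it simply cites \cite{mp}, Lemma~5, so there is no in-paper argument to compare against. Your direct verification---rewriting each factor $(A_k)_{i_k,i_{-k}}$ via the representative cycle element $l_j\in\{+k,-k\}$, then recognizing that the constraint $i=i\circ p$ decomposes into exactly the cyclic index-matching conditions $i_{-l_j}=i_{l_{j+1}}$ along each chosen cycle---is the natural route and is essentially what the proof in \cite{mp} does. One cosmetic point: you use $i$ both for the multi-index and for the cycle label in $\prod_{i=1}^{s}$; renaming the latter would avoid the clash.
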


\begin{remark}\label{rem:2}
In the framework above, if $ p([ n ]) = [ - n ] $, then $ p
\delta ( [ n ] ) = [ n ] $, so the cycles of $ p \delta $
have either only positive or only negative elements,
therefore $ \epsilon_j = 1 $ for all $ j = 1, 2, \dots, n $.
\end{remark}

\subsection{ Results on asymptotic second order freeness}

In this section we will recall some results from
\cite{mp} and \cite{cs}.

\begin{theorem}\label{thm:8}
Let $ \cA= \{ A_{ 1 }, A_{ 2 }, \dots, A_{ s } \}_{ N \geq 1
} $ and $ \cB = \{ B_{ 1 }, B_{ 2 }, \dots,\ab B_{ p } \}_{
  N \geq 1 } $ be two ensembles of $N \times N$ random
matrices such that the entries of $ \cA $ and the entries of
$ \cB $ form two independent families of random
variables. Let $ \mathcal{O} = \{ O \}_{ N \geq 1 } $ be a
family of $ N \times N $ Haar distributed orthogonal random
matrices with entries independent from the entries of $ \cA
$ and $ \cB $.

\emph{(\textit{i})} If the ensemble $ \cA $ has the $ t $-bounded
cumulants property, then $ \cA $ and $
\mathcal{O} $ satisfy equation \emph{(\ref{eq:02})} $($as in
Remark \ref{rem:eqn_4}$)$ and the ensemble $ \mathcal{ O }
\cup \cA $ has the $ t $-bounded cumulants property.
If $ \cA $ has a second order limit $ t $-distribution, then
$ \cA $ and $ \mathcal{O} $ asymptotically real second order
free.

\emph{(\textit{ii})} If the ensembles $ \cA $ and $ \cB $
both have the $t$-bounded cumulants property, then the
ensembles $ \widetilde{\cA} = \{ O A O^t : A\in \cA\}$ and $
\cB $ satisfy equation \emph{(\ref{eq:02})}, and $
\widetilde{\cA } \cup \cB $ has the  $ t $ -bounded cumulants property.
 If $ \cA $ and $ \cB $ have second order
limit $ t$-distributions, then $ \widetilde{\cA} $ and $ \cB
$ are asymptotically real second order free.

\end{theorem}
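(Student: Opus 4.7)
The plan is to apply the orthogonal Weingarten calculus together with the combinatorial identity of Lemma \ref{lemma:pi_epsilon} to compute covariances (and higher cumulants) of traces of alternating words in matrices from $\cA$ and in $O, O^t$, then identify the surviving terms as the two families of spoke/reversed-spoke products on the right-hand side of equation (\ref{eq:02}).

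For part (i), let $(C_1,\ldots,C_m)$ and $(D_1,\ldots,D_n)$ be centered cyclically alternating tuples of polynomials in $\cA \cup \mathcal{O}$. After merging each maximal $\cA$-block into one polynomial, we may assume each $\mathcal{O}$-letter is a single factor $O^{(\epsilon)}$ with $\epsilon \in \{\pm 1\}$. Expanding $\Tr(C_1\cdots C_m)\Tr(D_1\cdots D_n)$ in matrix indices and integrating the $O$-entries via the orthogonal Weingarten formula produces a sum over pairs $(p,q) \in \pnn \times \pnn$ of the weight $\Wg(pq\delta, N)$ times a constrained sum over the remaining $\cA$-indices. By Lemma \ref{lemma:pi_epsilon}, each such constrained sum collapses to a trace $\Tr_{\pi(p),\epsilon(p)}$ in the $\cA$-matrices, with transposes dictated by the sign vector $\epsilon(p)$.

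Next, combine the asymptotic $\Wg(\sigma, N) = O(N^{-n - |\sigma|})$ (where $|\sigma|$ is the hyperoctahedral length) with the size bounds from Remark \ref{rem:ord} for $\Tr_{\pi(p),\epsilon(p)}$ and the centering of the $C_i$, $D_j$. A counting argument singles out the surviving pairs $(p,q)$: only geodesic pairings contribute at the leading order, and centeredness kills those pairings which would pair two letters within the same tuple. The remaining pairings split into two classes according to whether $p([n]) = [-n]$ (Remark \ref{rem:2}, giving $\epsilon \equiv 1$ and the direct spoke terms $\prod_i \phi(C_i D_{k-i})$) or $p([n]) = [n]$ (forcing $\epsilon(p)$ to reverse the order on the inner circle, producing the reversed spoke terms $\prod_i \phi(C_i D_{i-k}^t)$). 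Running the same Weingarten order analysis on higher cumulants of traces establishes the $t$-bounded cumulants property for $\mathcal{O} \cup \cA$.

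Part (ii) follows by applying the same scheme to words built from $OAO^t$ and $B$: integrating out the $O$-entries by Weingarten produces constrained index sums which, by independence of the entries of $\cA$ and $\cB$ and Remark \ref{rem:ord}, factor into products of trace expressions in $\cA$-matrices and in $\cB$-matrices separately, each carrying transposes determined by $\epsilon(p)$; the geodesic analysis then identifies the leading pairings as those matching the real second order freeness formula (\ref{eq:02}) for $\widetilde{\cA}$ and $\cB$. The main technical obstacle is the combinatorial step isolating the surviving pairs: showing that the geodesic pairings in $\pnn \times \pnn$ that survive the centering are in bijection with the $2n$ (unreversed and reversed) spoke diagrams and come with Weingarten weight contributing exactly the cancelling power of $N$ to give each spoke diagram coefficient one in (\ref{eq:02}). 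The sign vector $\epsilon(p)$ produced by Lemma \ref{lemma:pi_epsilon} is the essential mechanism here, as it is what forces the transposed factor $D_{i-k}^t$ into the second sum, producing real (as opposed to complex) second order freeness.
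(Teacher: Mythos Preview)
Your proposal sketches a direct Weingarten computation, whereas the paper's proof of Theorem~\ref{thm:8} is essentially a citation: part~(\textit{i}) is deferred to Remark~39 and Theorem~50 of \cite{mp}, and part~(\textit{ii}) is reduced to part~(\textit{i}) via the expansion formula of \cite[Proposition~12]{mp} together with the parity-preservation result of \cite[Proposition~13]{mp}. So what you are outlining is not a different proof so much as a sketch of the underlying argument that \cite{mp} carries out in detail; in the present paper those details are imported wholesale.

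That said, your sketch has a genuine gap in part~(\textit{ii}). You write that after integrating out the $O$-entries, independence of the entries of $\cA$ and $\cB$ together with Remark~\ref{rem:ord} makes the constrained index sum ``factor into products of trace expressions in $\cA$-matrices and in $\cB$-matrices separately.'' Independence alone does not give this. After the Weingarten expansion the $\cA$- and $\cB$-letters are interleaved inside a single $\Tr_{\pi,\epsilon}$; to separate them one needs that every cycle of the permutation $\pi$ contains only $\cA$-letters or only $\cB$-letters. The paper obtains this from a structural fact specific to words of the form $OA_1O^tB_1OA_2O^t\cdots$: when the sign sequence on the $O$-factors is $\epsilon_k=(-1)^{k+1}$, the permutation $\pi_{p\cdot_\epsilon q}$ is parity preserving (\cite[Proposition~13]{mp}), and parity is exactly what distinguishes the $\cA$-positions from the $\cB$-positions. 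Without this observation the factorisation step fails, and with it the reduction of~(\textit{ii}) to~(\textit{i}).

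A smaller point on part~(\textit{i}): the dichotomy you state, that the surviving pairings have either $p([n])=[-n]$ or $p([n])=[n]$, is too clean as written. Remark~\ref{rem:2} tells you that $p([n])=[-n]$ forces $\epsilon\equiv 1$, but the complementary case for reversed spokes is not simply $p([n])=[n]$; the actual bijection between surviving geodesic pairings and the $2n$ spoke diagrams requires the more delicate analysis carried out in \cite{mp} (in particular the description of $\pi_{p\cdot_\epsilon p}$ as a pairing and its interaction with the cycle structure of $\gamma$). You have correctly flagged this as the main technical obstacle, but the resolution you propose does not quite match what is needed.
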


\begin{proof}
We begin with the proof of (\textit{i}). In Remark 39 of
\cite {mp} we gave two conditions sufficient for the
validity of equation (\ref{eq:02}). These were the
conditions that $\cA$ have the $ t$ -bounded cumulants property.
  If $\cA$ has a second order limit $ t $-distribution,
then the claim that $\cA$ and $\cO$ are
asymptotically free was proved in Theorem 50 of \cite{mp}.

Next we prove (\textit{ii}). We must show that $\widetilde
\cA \cup \cB$ has  the bounded cumulants  property. Let
us recall the formula
\begin{eqnarray}\label{eq:orthogonal_expansion}\lefteqn{
\E(\Tr_\gamma( O^{\epsilon_1} Y_1, \dots , 
    O^{\epsilon_n}Y_n))} \\ \notag
& = &
\sum_{p, q \in \cP_2(n)}
\langle \Wg(p), q \rangle
\E(\Tr_{\pi_{p \cdot_\epsilon q}, \eta_{p \cdot_\epsilon q}}
(Y_1, \dots, Y_n)
\end{eqnarray}
from \cite[Proposition 12]{mp}. Note that here $\Wg$ denotes
the orthogonal Weingarten function. When $\epsilon$ has the
special form $\epsilon_k = (-1)^{k+1}$ we have by
\cite[Proposition 13]{mp} that $\pi_{p \cdot_\epsilon q}$ is
parity preserving, i.e. all cycles consist of numbers with
the same parity. This means that on the right hand side of
(\ref{eq:orthogonal_expansion}) only words in elements all
from $\cA$ or all from $\cB$ appear. Thus as far as the
asymptotics of (\ref{eq:orthogonal_expansion}) are concerned
we may assume that $\cA \cup \cB$ has the $ t $-bounded cumulants property.
 By (\textit{i}) we then have that
$\widetilde \cA \cup \cB$ satisfies equation
(\ref{eq:02}). Finally if $\cA$ and $\cB$ have second order
limit distributions then as they satisfy equation
(\ref{eq:02}) there are asymptotically real second order
free.
\end{proof}

\noindent 
We will need to make use of the unitary Weingarten function
$\Wg$. For positive integers $N$ and $n$ we have a function
$\Wg_{N}: S_n \rightarrow \bC$ which can be used to evaluate
integrals with respect to Haar measure, $d\cU$, on the group
$\cU(N)$ of $N \times N$ unitary matrices. We will use the
notation $ u_{i,j} $ for the function $ u_{i,j}: \cU ( N )
\longrightarrow \mathbb{C} $ that gives the value of the
$(i,j)$-entry. We also need to use the M{\"o}bius function
$\mu$ of the partially ordered set of non-crossing
partitions and apply it to a permutation. Recall that $\mu:
NC(n) \times NC(n) \rightarrow \bR$ is multiplicative and
satisfies $\mu(0_n, 1_n) = (-1)^{n-1} C_{n-1}$ where $C_n =
\binom{2n}{n}/(n+1)$ is the $n^{th}$ Catalan number (see
\cite[Thm. 10.15]{ns}). Let $\sigma \in S_n$ be a
permutation with cycle decomposition $c_1 c_2 \cdots
c_k$. If we consider $\sigma$ to be a partition whose blocks
are the cycles $c_1, \dots, c_k$ then we set $\moeb(\sigma)
= \mu(0_n, \sigma)$. Corollaries 2.4 and 2.7 from \cite{cs}
can be formulated as follows.

\begin{theorem}\label{thm:unit1}
For all positive integers $ n, m$ and all multi-indices $
\bi, \bi^\prime \in [N]^{[n]} $, respectively $ \bj,
\bj^\prime \in [ N ]^{ [m] } $ we have that
\begin{align*}
\int_{\cU ( N ) }
  u_{i_1,j_1}&\cdots u_{i_n, j_n}
 \overline{u_{i^\prime_1, j^\prime_1}}\cdots 
\overline{u_{i^\prime_n, j^\prime_n}} 
\, d\mathcal{U} 
=\\
& \hspace{2.5cm}
\delta_{ m, n } \cdot \sum_{\sigma, \tau\in S_n}
\delta^{\bi}_{ \bi^\prime \circ \sigma} 
\delta^{ \bj }_{ \bj^\prime \circ \tau }
 \cdot \Wg_N(\tau\sigma^{-1}).
\end{align*}
Moreover
\[
\Wg_N( \sigma ) = N^{ -2 n + \#(\sigma)  } \moeb( \sigma ) + 
O( N^{-2 n + \#(\sigma)-2 } ).
\]
\end{theorem}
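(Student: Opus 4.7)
The plan is to prove the integral identity via Schur--Weyl duality and then obtain the asymptotic expansion by a Neumann series inversion of the associated Gram matrix.

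First I would dispose of the case $m \neq n$. The Haar measure on $\cU(N)$ is invariant under $\mathcal{U} \mapsto z\mathcal{U}$ for $|z|=1$, and this substitution multiplies the integrand by $z^{n-m}$; hence the integral vanishes unless $m=n$, producing the factor $\delta_{m,n}$.

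Setting $V = \bC^N$, the case $m = n$ is interpreted as computing, in the basis $\{e_{\bi} \otimes \overline{e_{\bj}}\}$, the matrix coefficients of the orthogonal projection $P$ of $V^{\otimes n} \otimes \overline{V^{\otimes n}} \cong \operatorname{End}(V^{\otimes n})$ onto its $\cU(N)$-invariant subspace. By classical Schur--Weyl duality, this invariant subspace is spanned (linearly independently for $N \geq n$) by the permutation operators $\{T_\sigma\}_{\sigma \in S_n}$, with $T_\sigma e_{\bi} = e_{\bi \circ \sigma^{-1}}$. Writing $P = \sum_{\sigma,\tau} c(\sigma,\tau)\, T_\sigma \otimes \overline{T_\tau}$ and imposing $P^2 = P$, the coefficients $c(\sigma,\tau)$ must give the inverse of the Gram matrix
\[
G_N(\sigma,\tau) = \langle T_\sigma, T_\tau \rangle_{\mathrm{HS}} = \Tr(T_{\sigma^{-1}\tau}) = N^{\#(\sigma^{-1}\tau)},
\]
which depends only on $\sigma^{-1}\tau$. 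One therefore sets $c(\sigma,\tau) = \Wg_N(\tau\sigma^{-1})$. Expanding the matrix coefficients of the operators $T_\sigma$ and $T_\tau$ against $e_{\bi}, e_{\bi'}, e_{\bj}, e_{\bj'}$ then produces exactly the Kronecker factors $\delta^{\bi}_{\bi'\circ\sigma}\delta^{\bj}_{\bj'\circ\tau}$ appearing in the theorem.

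For the asymptotic expansion, I would factor $G_N = N^{2n}(I - M_N)$ with
\[
M_N(\sigma,\tau) = \delta_{\sigma,\tau} - N^{\#(\sigma^{-1}\tau) - 2n}.
\]
Since $\#(\sigma^{-1}\tau) \leq n$ with equality precisely when $\sigma = \tau$, and since $\#(\sigma^{-1}\tau) \equiv n \pmod{2}$, the off-diagonal entries of $M_N$ are $O(N^{-2})$. The Neumann series
\[
\Wg_N = N^{-2n}(I - M_N)^{-1} = N^{-2n} \sum_{k \geq 0} M_N^k
\]
then has a leading contribution at $\sigma$ of order $N^{-2n + \#(\sigma)}$, whose coefficient is the alternating sum over saturated chains from the identity to $\sigma$ in the partial order on $S_n$ generated by transpositions. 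Via the Biane bijection between minimal transposition factorizations and non-crossing partitions, this alternating sum is precisely $\mu(0_n,\sigma) = \moeb(\sigma)$.

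The main obstacle is the combinatorial identification of the leading-order coefficient with $\moeb(\sigma)$, together with the verification that the first correction is truly $O(N^{-2n + \#(\sigma) - 2})$ rather than $O(N^{-2n + \#(\sigma) - 1})$. The sharper error rate comes from the parity observation $\#(\sigma) - n \in 2\bZ$, while the identification of the leading coefficient reduces to the standard Möbius computation on the lattice of non-crossing partitions.
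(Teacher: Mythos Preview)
The paper does not prove this theorem; it is quoted from Collins--\'Sniady \cite{cs} (their Corollaries 2.4 and 2.7). Your overall strategy---interpret the integral as the projection onto $\cU(N)$-invariants in $\operatorname{End}(V^{\otimes n})$, invoke Schur--Weyl, and invert the Gram matrix---is exactly the standard proof, so at the structural level you are reproducing the literature rather than diverging from the paper.

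That said, your asymptotic step contains two genuine errors. First, with the factorization $G_N = N^{2n}(I-M_N)$ you have $M_N(\sigma,\sigma) = 1 - N^{-n} \to 1$, so $\|M_N\|\to 1$ and the Neumann series $\sum_k M_N^k$ diverges. The correct normalisation is $G_N = N^{n}(I+B_N)$, since the diagonal of $G_N$ is $N^{\#(e)}=N^n$; then $B_N$ has zero diagonal and off-diagonal entries $N^{\#(\sigma^{-1}\tau)-n}=O(N^{-1})$, and the geometric series converges. Second, the claim ``$\#(\sigma^{-1}\tau)\equiv n \pmod 2$'' is false (take $\sigma^{-1}\tau$ a transposition: it has $n-1$ cycles). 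Individual off-diagonal entries are only $O(N^{-1})$. What \emph{is} true, and what actually yields the sharper $O(N^{-2n+\#(\sigma)-2})$ error, is that for any path $e=\sigma_0,\sigma_1,\dots,\sigma_k=\sigma$ the total weight $\sum_i\big(n-\#(\sigma_i^{-1}\sigma_{i+1})\big)$ has the same parity as $n-\#(\sigma)$, because each summand equals the Cayley distance and $(-1)^{n-\#(\rho)}=\sgn(\rho)$ is multiplicative. Hence any non-geodesic path overshoots by at least $2$, not $1$. Once these two points are repaired, the identification of the leading coefficient with $\moeb(\sigma)$ via geodesics in absolute order (or, equivalently, via the zeta/M\"obius pair on $NC$) goes through as you indicate.
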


\section{Haar Distributed Random Unitary Matrices }

We will use a slight reformulation of Theorem
\ref{thm:unit1} in the spirit of \cite{mp}.
First we define the map $ S_n \ni \sigma \mapsto \hat{
\sigma } \in \cP_2 ( 2 n) $ given by $ \hat{\sigma}( i ) =
2n + 1 - \sigma ( i ) $ for $ 1 \leq i \leq n $ .  Note that
$ \hat{\sigma}( 2n + 1 - i ) = \sigma^{-1}( i ) $ and that $
\sigma \mapsto \hat{\sigma} $ is a bijection from $ S_n $ to
the set $ \{ \pi \in \cP_2 ( 2n ) : \pi( [ n ] ) = [ 2n ]
\setminus [ n ] \}$. See Fig. \ref{fig-4}.

\begin{lemma}\label{prop:u1}
Let $ \sigma, \tau $ be permutations from $ S_n $ and $c_1
c_2 \cdots c_p$ be the cycle decomposition of $ \tau\sigma^{
  -1} $.  Then $ \hat{ \tau }\hat{ \sigma } $ has the cycle
decomposition $d_1 d_1' d_2 d_2' \cdots d_p d_p'$ where, if
$ c_k = ( i_1, i_2, \dots, i_l ) $ then $ d_k = ( 2n + 1 -
i_1, 2 n + 1 - i_2, \dots, 2 n + 1 - i_l ) $ and $
d_k^\prime = ( \sigma^{-1} ( i_l ) , \sigma^{-1} ( i_{ l - 1
}), \dots, \sigma^{-1} ( i_1 ) ) $ .
\end{lemma}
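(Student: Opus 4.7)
The plan is to verify the claim by a direct computation of the action of $\hat\tau\hat\sigma$ on $[2n]$, split according to whether the input lies in $[n]$ or in $[2n]\setminus[n]$.

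First I would record the two basic formulas for the pairing $\hat\sigma$. By definition $\hat\sigma(i)=2n+1-\sigma(i)$ for $i\in[n]$; since $\hat\sigma$ is an involution (an element of $\cP_2(2n)$), applying it to $2n+1-\sigma(i)$ must return $i$, which (renaming $k=\sigma(i)$) gives
\[
\hat\sigma(2n+1-k)=\sigma^{-1}(k),\qquad k\in[n].
\]
Thus $\hat\sigma$ sends $[n]$ bijectively onto $[2n]\setminus[n]$ and vice-versa, as already noted in the paper.

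Next I would compose. For $i\in[n]$, $\hat\sigma(i)=2n+1-\sigma(i)$ lies in $[2n]\setminus[n]$, so using the second formula applied to $\hat\tau$,
\[
\hat\tau\hat\sigma(i)=\hat\tau\bigl(2n+1-\sigma(i)\bigr)=\tau^{-1}\sigma(i).
\]
For $j=2n+1-k\in[2n]\setminus[n]$ with $k\in[n]$, the first formula gives $\hat\sigma(2n+1-k)=\sigma^{-1}(k)\in[n]$, whence
\[
\hat\tau\hat\sigma(2n+1-k)=\hat\tau(\sigma^{-1}(k))=2n+1-\tau\sigma^{-1}(k).
\]
So on $[2n]\setminus[n]$ the permutation $\hat\tau\hat\sigma$ is conjugate (via $k\mapsto 2n+1-k$) to $\tau\sigma^{-1}$, and on $[n]$ it equals $\tau^{-1}\sigma=(\tau\sigma^{-1})^{-1}$ up to relabelling.

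With these two formulas in hand the proof of the cycle structure is essentially a verification. For each cycle $c_k=(i_1,\dots,i_l)$ of $\tau\sigma^{-1}$ I would check:
\begin{itemize}
\item The sequence $d_k=(2n+1-i_1,\dots,2n+1-i_l)$ is a cycle of $\hat\tau\hat\sigma$: indeed, by the second formula, $\hat\tau\hat\sigma(2n+1-i_r)=2n+1-\tau\sigma^{-1}(i_r)=2n+1-i_{r+1}$ (indices mod $l$).
\item The sequence $d_k'=(\sigma^{-1}(i_l),\sigma^{-1}(i_{l-1}),\dots,\sigma^{-1}(i_1))$ is also a cycle, by the first formula: $\hat\tau\hat\sigma(\sigma^{-1}(i_r))=\tau^{-1}(i_r)$, and since $\tau\sigma^{-1}(i_{r-1})=i_r$ one has $\tau^{-1}(i_r)=\sigma^{-1}(i_{r-1})$, so the next element of the cycle is $\sigma^{-1}(i_{r-1})$, matching the prescribed order.
\end{itemize}
A length check then completes things: the cycles $d_k$ exhaust the orbits on $[2n]\setminus[n]$ and the cycles $d_k'$ exhaust the orbits on $[n]$, each of length $l=|c_k|$, so $\sum_k(|d_k|+|d_k'|)=2n$ and no elements are missed.

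The only real obstacle is bookkeeping: keeping straight which of $\sigma$, $\sigma^{-1}$, $\tau$, $\tau^{-1}$ appears after applying the two formulas for $\hat\sigma$, and getting the ordering in $d_k'$ reversed (as opposed to $d_k$) correct. This accounts for why the paired cycle $d_k'$ runs through $\sigma^{-1}(i_l),\sigma^{-1}(i_{l-1}),\dots$ rather than in increasing index order; once one traces a single element around the orbit using the two formulas above, the reversal becomes forced.
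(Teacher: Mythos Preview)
Your proof is correct and follows essentially the same direct-computation approach as the paper: you derive the two formulas for $\hat\sigma$ on $[n]$ and on $[2n]\setminus[n]$, then verify the action of $\hat\tau\hat\sigma$ on each proposed cycle, exactly as the paper does (the paper just does it more tersely, without separating out the formulas or the exhaustion check). Your added length/exhaustion remark is a nice touch that the paper leaves implicit.
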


\begin{proof}
Suppose $ ( i_1, \dots, i_l ) $ is a cycle of $ \tau
\sigma^{ -1} $ and denote $ j_k = \sigma^{-1} ( i_k ) $, $ 1
\leq k \leq l $.  To simplify the writing, we will consider
that $ i_{l + 1 } = i_1 $ and $ j_{l + 1 } = j_1 $.  Then
$
\hat{ \tau }\hat{ \sigma } ( 2 n + 1 - i_k ) = \hat{ \tau }
( \sigma^{ -1} ( i _k ) ) = \hat{\tau}( j_k ) = 2 n + 1 -
i_{ k + 1}
$,
while
\[
\hat{ \tau } \hat{ \sigma }( \sigma^{-1} ( i_k) ) =
\hat{\tau}\hat{\sigma}\hat{\sigma}( 2 n + 1 - i_k ) =
\hat{\tau}( 2 n + 1 - i_k ) = j_{k-1}=\sigma^{-1}(i_{k-1} ).
\]
\end{proof}

\setbox1=\hbox{\includegraphics{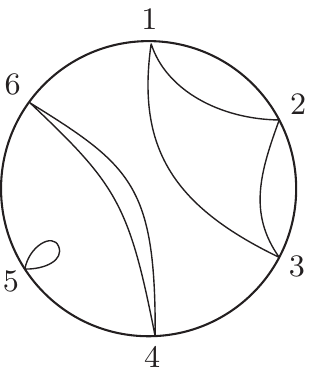}}
\setbox2=\hbox{\includegraphics{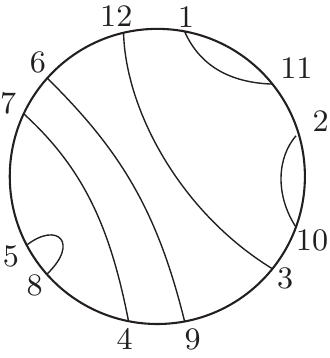}}

\begin{figure}
\noindent
\hfill\box1\hfill\box2\hfill\hbox{}
\caption{\small On the left we have a permutation $\sigma
 \in S_6$ with cycle decomposition $(1,2,3)(4,6)(5)$. On
  the right we have the corresponding pairing $\hat\sigma
  \in \cP_2(12)$ with pairs $(1,11)\ab(2, 10) (3,12) (4,
  7)(5, 8)(6, 9)$. Under $\hat\sigma$ each number in $[6]$
  is paired with a number in $[7, 12]$.}\label{fig-4}
\end{figure}

We introduce now the map $ \Phi_N : \cP_2(n) \times \cP_2(n)
\longrightarrow \mathbb{C} $ as follows. Let $ p, q \in
\cP_2(n) $ be pairings and write the cycle decomposition of
$ pq $ as $c_1 c_1'\cdots c_l c_l'$ with $ c_k^\prime =
qc_k^{-1}q$. Let $m = n/2$, the cycles $c_1 \cdots c_l$ form
a permutation of subset of $[n]$ with $m$ elements. Choose
$\sigma \in S_m$ so that $\sigma$ has the same cycle
structure as $c_1 \cdots c_l$. By this we mean that $\sigma$
has $l$ cycles and the lengths of the cycles of $\sigma$
match those of $c_1 \cdots c_l$. Let $\Phi_N(p,q) =
\Wg_N(\sigma)$. As shown in \cite{cs}, the value $
\text{Wg}_{N } ( \sigma ) $ depends only on the cycle
decomposition of $ \sigma $, so $ \Phi_N $ is well-defined.
Note that Theorem \ref{thm:unit1} then gives that
\begin{equation}\label{eq1}
\Phi_N ( p, q ) = N^{ - n + \#( pq )/2 } \cdot \moeb (
\sigma) + O( N^{ - n + \#( pq)/2 - 2 } ).
\end{equation}
With the above notation, Theorem \ref{thm:unit1} can be
reformulated using pairings. Let $\alpha_1, \dots , \alpha_n
\in \{-1, 1\}$. We shall, as usual, regard $\alpha$ as the
$n$-tuple $(\alpha_1, \dots, \alpha_n)$ and the function
$\alpha: [n] \rightarrow \{-1, 1\}$. Let
\[
\cP_2^\alpha ( n ) = \{ p \in \cP_2 ( n ) \mid \text{ if }
p( k ) = l \text{ then } \alpha_k = -\alpha_l \}.
\]
In the language of functions we can write the property as
$\alpha \circ p = - \alpha$. Theorem \ref{thm:unit1} has the
following reformulation which extends the version stated in
\cite{cm}. For the reader's convenience we provide a
proof. Here we are following our convention that
$u_{ij}^{(1)} = u_{ij}$ and $u_{ij}^{(-1)} = \overline{
  u_{ij}}$. Recall our convention regarding permutations
from \ref{s22}, namely that if a $\pi$ is a permutation of
$[n]$, we extend $\pi$ to be a permutation of $[\pm n]$ by
setting $\pi(-k) = -k$ form $k > 0$.
\begin{proposition}\label{prop:u2}
Then
\begin{equation}\label{eq:pairingweingarten}
\int_{ \mathcal{U}(N)} u_{ i_1, i_{ -1}}^{ (\alpha_1) }
\cdots u_{ i_n, i_{-n}}^{ (\alpha_n) } d \mathcal{U} =
\sum_{ p, q \in P_2^{\boldsymbol{\alpha}}( n ) } \delta_{
  \bi \circ p\delta q \delta} ^{ { \bi } } \cdot \Phi_N( p,
q ).
\end{equation}
\end{proposition}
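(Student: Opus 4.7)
The plan is to derive (\ref{eq:pairingweingarten}) from Theorem~\ref{thm:unit1} by recasting the sum over pairs of permutations as a sum over pairs of pairings. As a preliminary, if the number of factors with $\alpha_k=1$ differs from the number with $\alpha_k=-1$, then under $U\mapsto e^{i\theta}U$ the integrand picks up a nontrivial character of $U(1)$ and the integral vanishes; meanwhile $P_2^\alpha(n)=\emptyset$, so both sides are zero. Assume henceforth $n=2m$ with $m$ indices of each sign. Because the integrand is a commutative product of scalars, I may permute the factors freely, and both sides of the identity transform covariantly under a joint relabeling of $[n]$ and $[-n]$; it therefore suffices to establish the distinguished case $\alpha_1=\cdots=\alpha_m=1$, $\alpha_{m+1}=\cdots=\alpha_n=-1$.

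In this distinguished case Theorem~\ref{thm:unit1} yields
\[
\sum_{\sigma,\tau\in S_m}\Big(\prod_{k=1}^m \delta_{i_k,\,i_{m+\sigma(k)}}\Big)\Big(\prod_{k=1}^m \delta_{i_{-k},\,i_{-(m+\tau(k))}}\Big)\Wg_N(\tau\sigma^{-1}).
\]
Associate to each $\sigma\in S_m$ the pairing $p_\sigma\in P_2^\alpha(n)$ whose pairs are $\{k,\,m+\sigma(k)\}$ for $k\in[m]$; this is a bijection $S_m\to P_2^\alpha(n)$. Setting $p=p_\sigma$ and $q=p_\tau$, a direct computation using that $\delta$ swaps $[n]$ and $[-n]$ while $p,q$ fix $[-n]$ gives $(p\delta q\delta)(k)=p(k)$ for $k\in[n]$ and $(p\delta q\delta)(-k)=-q(k)$ for $k\in[n]$. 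Hence the condition $\bi=\bi\circ p\delta q\delta$ unfolds to $i_k=i_{m+\sigma(k)}$ and $i_{-k}=i_{-(m+\tau(k))}$, matching the two products of Kronecker deltas above.

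It remains to identify $\Phi_N(p_\sigma,p_\tau)$ with $\Wg_N(\tau\sigma^{-1})$. A calculation shows $(p_\sigma p_\tau)(k)=\sigma^{-1}\tau(k)$ for $k\in[m]$ and $(p_\sigma p_\tau)(m+l)=m+\sigma\tau^{-1}(l)$ for $l\in[m]$. Thus the cycles of $p_\sigma p_\tau$ split into cycles on $[m]$ carrying the cycle type of $\sigma^{-1}\tau$ together with companion cycles on $[m+1,n]$ of the same lengths, precisely in the $c_k c_k'$ form described in Section~\ref{s22}. Selecting the cycles on $[m]$ as representatives, the definition of $\Phi_N$ gives $\Phi_N(p_\sigma,p_\tau)=\Wg_N(\rho)$ for any $\rho\in S_m$ with the cycle type of $\sigma^{-1}\tau$; since $\tau\sigma^{-1}=\sigma(\sigma^{-1}\tau)\sigma^{-1}$ is conjugate to $\sigma^{-1}\tau$ and $\Wg_N$ is a class function, this equals $\Wg_N(\tau\sigma^{-1})$. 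Undoing the initial relabeling recovers the identity for general $\alpha$. The main obstacle is purely the bookkeeping of verifying that the delta-function conditions and the cycle-structure computation line up under the permutation-to-pairing correspondence; no fresh analytic input is needed beyond Theorem~\ref{thm:unit1}.
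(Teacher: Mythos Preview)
Your argument is correct and follows essentially the same route as the paper: reduce to the standard sign pattern $\alpha_1=\cdots=\alpha_m=1$, $\alpha_{m+1}=\cdots=\alpha_{2m}=-1$, identify $S_m$ bijectively with $\cP_2^\alpha(2m)$, and check that both the Kronecker-delta constraints and the Weingarten weights match. The only cosmetic differences are that the paper uses the bijection $\hat\sigma(i)=2m+1-\sigma(i)$ (and invokes Lemma~\ref{prop:u1} for the cycle structure of $\hat\tau\hat\sigma$) whereas you use the slightly simpler $p_\sigma(k)=m+\sigma(k)$ and compute the cycle structure of $p_\sigma p_\tau$ directly; and the paper carries out the reduction to standard $\alpha$ via an explicit involution $\pi$ rather than the covariance remark you make, which is correct but could be spelled out a bit more carefully.
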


\begin{proof}
If the cardinalities of $\alpha^{-1}(1)$ and
$\alpha^{-1}(-1)$ are different that both sides of
(\ref{eq:pairingweingarten}) are 0. That the left hand side
is 0 follows from Theorem \ref{thm:unit1} and that the right
hand side is 0 is because in this case $\cP^\alpha_2(n)$ is
empty. So we shall assume that $n = 2m$ and both of
$\alpha^{-1}(1)$ and $\alpha^{-1}(-1)$ have cardinality $m$.

Let $ A= \alpha^{-1}(-1) \cap [m]$ and $B = \alpha^{-1}(1)
\cap [m+1, n]$. Then $A$ and $B$ must have the same
cardinality, $k$. Write $ A = \{ s_1, s_2, \dots, s_k \} $
and $ B = \{ s_{ - 1 } , s_{ - 2 }, \dots, s_{ - k} \} $.
Define $ \pi \in S_{ 2 m } $ as follows
\[ 
\pi(k) = \left\{
\begin{array}{ cl}
k & \text{if}\ k \in [ 2m] \setminus (A \cup B )\\
s_{ - p } & \text{if} \ k = s_p \in A \cup B.
\end{array}
\right.
\]
Let $ \widetilde{\alpha} = \alpha \circ \pi$,  then
\[
 \widetilde{\alpha}_k = \alpha\circ \pi (k) =
\left\{
\begin{array}{c c }
1 & \text{if}\ k \leq m \\
-1 & \text{if}\ k > m.
\end{array}
\right.
\]
Note that $ \pi $ is an involution and for $p \in P^{
  \balpha }_2 (2m ) $, we have $\tilde{p} : = \pi \circ p
\circ \pi \in \cP^{ \widetilde{\alpha}}_2 (2 m ) $. Moreover
since $\tilde p \tilde q = \pi pq \pi$, $\tilde p \tilde q$
and $pq$ have the same cycle structure and so $\Phi_N ( p,
q) = \Phi_N ( \widetilde{p}, \widetilde{q} ) $.  Also $i = i
\circ p $ is equivalent to $ i \circ \pi = ( i \circ \pi )
\circ \tilde{p } .$
Therefore, without loss of generality, we can suppose that $
\alpha_1 = \cdots = \alpha_m = 1$ and $ \alpha_{ m +1 } =
\cdots = \alpha_{ 2m } = -1$.  With this assumption, the map
$ \sigma \mapsto \widehat { \sigma } $ is a bijection from $
S_m $ to $ \cP_2^{\alpha } (2m ) $.

For $ 1 \leq k \leq m $ let $ i_k^\prime = i_{2m+1 - k } $,
$ j_k = i_{ -k}$ and $ j_k^\prime = i_{-(2m + 1 - k)}$. Then
\[
\int_{\cU(N)} u_{i_1i_{-1}}^{(\alpha_1)} \cdots
u_{i_ni_{-n}}^{(\alpha_n)} \, d\cU = \int_{\cU(N)}
u_{i_1j_1} \cdots u_{i_mj_m} \overline{u_{i_1'j_1'}} \cdots
\overline{u_{i_m'j_m'}} \, d\cU.
\]
Also the equality $i = i' \circ \sigma$ means that for $k >
0$, $i(k) = i(\hat \sigma(k))$. Likewise the equality $j =
j' \circ \tau$ means that for $k > 0$, $i(-k) = j(k) = i
(-\hat\tau(k))$. Together they are equivalent to $i = i
\circ \hat\sigma \delta\hat\tau\delta$.  Theorem
\ref{thm:unit1} gives
\begin{eqnarray*}\lefteqn{
\int_{\cU(N)} 
u_{ i_1, i_{ -1}}^{(\alpha_1)}  \cdots 
u_{ i_n, i_{-n}}^{ \alpha_n } d\mathcal{U} } \\
& = &
\sum_{ \sigma, \tau \in S_m }  
\delta^i_{i'\circ\sigma} 
\delta^j_{ j'\circ\tau} \Wg_N(\tau\sigma^{-1})\\
& = &
\sum_{ \sigma, \tau \in S_{m} } 
\delta^i_{ i \circ \hat\sigma \delta \hat\tau \delta}
\Phi_N ( \hat{ \sigma },  \hat{ \tau } )
=
\sum_ { p,q \in P_2^\alpha( 2m ) } \delta_{ \bi \circ p
  \delta q \delta }^{{ \bi } } \cdot \Phi_N ( p , q ).
\end{eqnarray*}
\end{proof}

\medskip 
For the rest of this section, $ U_N $ will denote a $ N
\times N $ Haar distributed unitary random matrix, and $
\mathfrak{U} $ will denote the ensemble $ \{ U_N, U_N^t,
U_N^\ast, \ab \overline{U_N} \}_N $, i.e., with the
notations from Section \ref{notations}, $ \mathfrak{U} = \{
U_N^{ ( \epsilon, \eta ) } : ( \epsilon, \eta) \in \{ -1, 1
\}^2 \} $.  Here for a matrix $A = (a_{ij})_{ij}$,
$\overline{A} = ({A^*})^t =(\overline{a_{ij}})$ denotes the
matrix obtained by taking the complex conjugate of each
entry. As before, we will omit the index $ N $ under the
assumption that we will only multiply matrices of the same
size.

\begin{definition}
An ensemble $ \cA = \{A_1, \dots, A_s\}$ of random matrices
is said to be \emph{unitarily invariant} if for any positive
integer $ N $, any $ N \times N $ unitary matrix $ U$, any
$r_1, \dots, r_k$, and any $i_1, \dots, i_n, i_{-1}, \dots,
i_{-n}$, we have that
\[
 \E( a^{ ( r_1 ) }_{ i_1, i_{ -1 } } a^{ (r_2 ) }_{ i_2, i_{ - 2
 } } \cdots a^{ ( r_k ) }_{ i_s, i_{ - s } } ) = \E( b^{ ( r_1 )
 }_{ i_1, i_{ -1 } } b^{ (r_2 ) }_{ i_2, i_{ - 2 } } \cdots
 b^{ ( r_k ) }_{ i_s, i_{ - s } } )
\]
where $ a^{( k )}_{ i j } $ is the ($i,j $)-entry of $ A_k $
and $b^{ ( k ) }_{i j } $ is the ($ i, j $)-entry of $ U A_k
U^{-1}$.

If this holds for $U$ replaced by any orthogonal matrix we
say the ensemble $\cA$ is \textit{orthogonally invariant}.
\end{definition}

Note that by the invariance of Haar measure for any unitary
matrix $V$ and any Haar distributed random unitary matrix
$U$, we have that $VUV^{-1}$ is another Haar distributed
random unitary matrix. Since any orthogonal matrix is a
unitary matrix we get the same conclusion if we replace $V$
by an orthogonal matrix. Now consider $U$ and $U^*$
together. Let $A_1 = U$, $A_2 = U^*$, $B_1 = VA_1V^{-1}$,
and $B_2 = VA_2V^{-1}$. Then by the unitary invariance of
the ensemble $\{ U, U^*\}$ we have
$\E(a^{(1)}_{11}a^{(2)}_{22}) =
\E(b^{(1)}_{11}b^{(2)}_{22})$ for any unitary $V$. In the
lemma below we shall show that when we set $A_2 =
\overline{U}$ and for a particular $V$, we have
$\E(a^{(1)}_{11} a^{(2)}_{22}) \not=
\E(b^{(1)}_{11}b^{(2)}_{22})$, thus demonstrating that the
ensemble $\{U, \overline{U} \}$ is not unitarily invariant.

\begin{notation}
Let $ \{ U_N \}_N $ be an ensemble of Haar unitary random
matrices, we shall denote by $ \mathfrak{U} $ the ensemble
$\{ U_N, U_N^t, U_N^\ast, \overline{U} \}_N $.
\end{notation}

\begin{lemma}\label{orthoginvar}
The ensemble $ \mathfrak{U} $ is orthogonally invariant, but
not unitarily invariant.
\end{lemma}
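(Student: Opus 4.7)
The plan is to split the lemma in half: orthogonal invariance will come almost immediately from the reality of orthogonal matrices together with the bi-invariance of Haar measure, and the failure of unitary invariance will be witnessed by a single cleverly chosen $V$, as signaled in the paragraph preceding the lemma.

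For orthogonal invariance, I would fix an orthogonal $N\times N$ matrix $O$ and set $\widetilde U = OUO^{-1}$. Because $O$ is real and $O^{-1}=O^t$, one has
\[
\widetilde U^{t} = OU^{t}O^{-1},\qquad \overline{\widetilde U} = O\,\overline{U}\,O^{-1},\qquad \widetilde U^{*} = OU^{*}O^{-1}.
\]
The left-right invariance of Haar measure on $\cU(N)$ makes $\widetilde U$ again Haar distributed, so the joint distribution of $(\widetilde U,\widetilde U^{t},\widetilde U^{*},\overline{\widetilde U})$ coincides with that of $(U,U^{t},U^{*},\overline U)$. This is precisely the definition of orthogonal invariance for $\mathfrak U$, and no combinatorics are needed.

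For the failure of unitary invariance I would follow the suggestion in the paragraph preceding the lemma and compare a single two-point function before and after conjugation by a unitary $V$. The $n=1$ specialization of Theorem~\ref{thm:unit1} gives the classical formula $\E(u_{ij}\overline{u_{kl}}) = \delta_{ik}\delta_{jl}/N$, whence $\E(U_{11}\overline{U_{22}})=0$. On the other hand, expanding
\[
(VUV^{-1})_{11}=\sum_{j,k}v_{1j}U_{jk}\overline{v_{1k}},\qquad (V\overline U V^{-1})_{22}=\sum_{l,m}v_{2l}\overline{U_{lm}}\,\overline{v_{2m}}
\]
and applying the same two-point formula collapses the expectation to $\frac{1}{N}\bigl|(VV^{t})_{12}\bigr|^{2}$. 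The problem therefore reduces to exhibiting a unitary $V$ with $(VV^{t})_{12}\neq 0$; the block choice
\[
V \;=\; \tfrac{1}{\sqrt{2}}\begin{pmatrix}1 & i\\ i & 1\end{pmatrix}\oplus I_{N-2}
\]
is unitary and a direct multiplication yields $(VV^{t})_{12}=i$, giving the desired inequality.

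The only step that demands a moment of thought is recognizing that $V\mapsto VV^{t}$ is the precise obstruction: an orthogonal $V$ satisfies $VV^{t}=I$ so the extra contribution vanishes, whereas a generic unitary has $VV^{t}\neq I$. This dichotomy is exactly what makes the conjugation-closed family $\{U,U^{t},U^{*},\overline U\}$ invariant under $\cO(N)$ but not under $\cU(N)$.
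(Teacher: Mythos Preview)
Your proof is correct and follows essentially the same approach as the paper. Both halves match: orthogonal invariance via $(OUO^{t})^{(\epsilon,\eta)}=OU^{(\epsilon,\eta)}O^{t}$ plus bi-invariance of Haar measure, and failure of unitary invariance witnessed by the moment $\E(U_{11}\overline{U_{22}})$ before and after conjugation by a $2\times 2$ unitary block (your $V$ is the paper's choice at $\theta=\pi/4$). Your intermediate reduction of the conjugated moment to $\tfrac{1}{N}|(VV^{t})_{12}|^{2}$ is a pleasant packaging that the paper does not make explicit, since it makes transparent why exactly orthogonal $V$ succeed and generic unitary $V$ fail.
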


\begin{proof}
That $\fU$ is orthogonally invariant follows from the remark
above and the identity
\[
 ( O U O^t )^{( \varepsilon, \eta )} = O U^{( \varepsilon,
  \eta )} O^t
\]
for all $ ( \varepsilon, \eta ) \in \{ -1, 1 \}^2 $ and $ O
$ an orthogonal matrix.

Indeed, let us denote by $ E_{ i, j } $ the $ N \times N $ matrix
with all entries $ 0 $ except for the $ ( i, j ) $-entry
which is 1.  Also, for $ A $ a random matrix, denote $ ( A
)_{ i, j } $ its $ (i, j ) $ entry.

For a positive integer $ r $, a multi-index $ \bi \in [ N
]^{ [ \pm s ] } $ and $\boldsymbol\varepsilon,
\boldsymbol\eta \in \{ -1, 1 \}^s$, define the function $
f_{ \bi, \boldsymbol\varepsilon, \boldsymbol\eta}:
\mathcal{U}(N) \lra \CC $ given by
\[
f_{ \bi, \boldsymbol\varepsilon, \boldsymbol\eta} ( U ) =
\prod_{ k=1 }^s ( U^{ ( \varepsilon_k, \eta_k )} )_{ i_k i_{
    - k } } = \Tr ( \prod_{ k=1 }^s E_{ 1 i_k } U^{ (
  \varepsilon_k, \eta_k )} E_{ i_{ -k} 1 } ).
\]

According to Theorem \ref{thm:unit1}, $ f_{ \bi,
  \boldsymbol\varepsilon, \boldsymbol\eta} \in L^\infty(
\mathcal{U}(N ), d\,\mathcal{ U } ) $ for all $ \bi,
\boldsymbol\varepsilon, \boldsymbol\eta $ as above, and the
translation invariance of Haar measure gives that
\[
 \int_{ \mathcal{U}(N) }
  f_{ \bi, \boldsymbol\varepsilon, \boldsymbol\eta} (U) d\,\mathcal{U}
   = \int_{ \mathcal{U}(N) }
    f_{ \bi, \boldsymbol\varepsilon, \boldsymbol\eta} ( O U O^t ) d\,\mathcal{U}.
\]
Hence
\begin{align*}
\E \Big(\prod_{ k=1 }^s  
( U^{ ( \varepsilon_k, \eta_k )} )_{ i_k i_{ - k } }  \Big)
=&
\int_{ \mathcal{U}(N) } f_{ \bi, \boldsymbol\varepsilon,
  \boldsymbol\eta} (U) d\,\mathcal{U}
=
\int_{ \mathcal{U}(N) }
 f_{ \bi, \boldsymbol\varepsilon, \boldsymbol\eta} ( O U O^t )
  d\,\mathcal{U}\\
=& 
\E \Big(
 \Tr \Big(
  \prod_{ k=1 }^s E_{ 1 i_k } ( O U O^t )^{ ( \varepsilon_k,
    \eta_k )} E_{ i_{ - k } 1 }
   \Big)
 \Big)
\\
=&
 \E \Bigg(
  \Tr \Big(
   \prod_{ k=1 }^s \E_{ 1 i_k } ( O U^{ ( \varepsilon_k, \eta_k )} O^t )
    \E_{ i_{ - k } 1 }
    \Big)
  \Big)
\\
=& 
\E \Big(
 \prod_{ k=1 }^s ( O U^{ ( \varepsilon_k, \eta_k )} O^t )_{i_k i_{-k } }
  \Big).
\end{align*}

For the second part, let $ V $ be the $N \times N$ unitary matrix
\[
V = 
\left(\begin{matrix}
\sin \theta & i\cos \theta & 0 & \cdots & 0 \\
i\cos \theta & \sin \theta & 0 & \cdots & 0 \\
0 & 0    & 1 & \cdots & 0 \\
\vdots & \vdots & \vdots  &  & \vdots \\
0 & 0 & 0 & \cdots & 1 
\end{matrix}\right)
\]
where $\theta$ is such that $\cos\theta \sin \theta \not =
0$. Let $A_1 = U$ and $A_2 = \overline{U}$ and let $B_1 =
VA_1V^*$ and $B_2 = VA_2V^*$. Then $a^{(1)}_{11} = u_{11}$
and $a^{(2)}_{22} = \overline{u_{22}}$ and
\begin{align*}
b^{(1)}_{11} & = \sin^2 \theta u_{11} + i \cos \theta \sin
\theta u_{21} - i \cos \theta \sin \theta u_{12} + \cos^2
\theta u_{22} \\ b^{(2)}_{22} & = \cos^2 \theta
\overline{u_{11}} - i \cos \theta \sin \theta
\overline{u_{21}} + i \cos \theta \sin \theta
\overline{u_{12}} + \sin^2 \theta \overline{u_{22}}.
\end{align*}
Recall from Proposition \ref{prop:u2} that $\E(u_{ij}u_{kl})
= 0$ for all $i, j, k, l$ and $\E(u_{ij}\overline{u_{kl}})
\not= 0$ only when $i=k$ and $j = l$, in which case
$\E(u_{ij}\overline{u_{ij}}) = N^{-1}$. So $\E(a^{(1)}_{11}
a^{(2)}_{22}) = 0$. On the other hand in $\E(b^{(1)}_{11}
b^{(2)}_{22})$ there are sixteen terms of which only four
are non-zero. So
\begin{eqnarray*}
\E(b^{(1)}_{11} b^{(2)}_{22}) & = & \cos^2 \theta \sin^2
\theta \E(u_{11} \overline{u_{11}} + u_{21}
\overline{u_{21}} + u_{12} \overline{u_{12}} + u_{22}
\overline{u_{22}}) \\ & = & 4 \cos^2 \theta \sin^2 \theta
N^{-1}.
\end{eqnarray*}
Thus $\E(a^{(1)}_{11} a^{(2)}_{22}) \not = \E(b^{(1)}_{11} b^{(2)}_{22})$. 
\end{proof}


\begin{definition}\label{defn:016}
Let $\cA $ be a set of $ N \times N $ random matrices. A
word $ W $ in elements from $\cA $ and $ N \times N $
unitary matrices from $ \mathfrak{U} $ will be called
\emph{simplified} if either $ W $ is a centered polynomial
in matrices from $ \cA $ with complex coefficients, or
\[
W = A_1 U^{( \epsilon_1, \eta_1)} A_2 U^{( \epsilon_2,
  \eta_2 )} \cdots A_m U^{ ( \epsilon_m, \eta_m )}
\]
with each $ \epsilon_i, \eta_i \in \{ -1, 1\} $ and each $
A_i $ either a centered polynomial in matrices from $ \cA $
with complex coefficients, or the identity matrix, but $ A_i
$ can be the identity matrix only if $ (\epsilon_{ i -1},
\eta_{ i-1 } ) \neq ( - \epsilon_i, - \eta_i) $, for $ i > 1
$, respectively only if $ ( \epsilon_1, \eta_1 ) \neq ( -
\epsilon_m, - \eta_m ) $, for $ i = 1 $.
\end{definition}

\begin{lemma}\label{lemma:017}
Let $ \cA $ be a set of $ N \times N $ random matrices and $
W $ be a polynomial with complex coefficients in elements
from $ \cA $ and $ N \times N $ unitaries from $
\mathfrak{U} $. Then there exist $ W_1, \dots, W_m $ some
simplified words in $ \cA $ and $ \mathfrak{U} $ and $c_0,
c_1, \dots, c_m $ some polynomials with complex coefficients
in expectations of normalized traces of products of elements
from $ \cA $ such that
\[ 
\Tr( W ) = c_0 + c_1 \Tr(W_1) + \dots + c_n \Tr(W_m ).
\]
\end{lemma}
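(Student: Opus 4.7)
The plan is to proceed by linearity and then induct on the number of $\mathfrak{U}$-factors appearing in a monomial. First, by writing $W$ as a linear combination of monomials and using the linearity of $\Tr$, it suffices to prove the lemma for a single monomial $W$. Any such monomial can be written, after grouping consecutive $\cA$-factors into a single product, in the form $A_1 U^{(\epsilon_1, \eta_1)} A_2 U^{(\epsilon_2, \eta_2)} \cdots A_k U^{(\epsilon_k, \eta_k)}$ where each $A_i$ is either a monomial in $\cA$ or the identity matrix (with $A_i = I$ inserted wherever two consecutive unitary factors appear in the original word). By the cyclic property of $\Tr$, I will treat the positions of the $A_i$'s modulo $k$.

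The base case $k = 0$ is handled directly: if $W$ is a monomial in $\cA$, write $W = \widetilde W + \E(\tr(W)) I$ with $\widetilde W := W - \E(\tr(W)) I$ centered. Then $\widetilde W$ is simplified by definition, and $\Tr(W) = \Tr(\widetilde W) + N \cdot \E(\tr(W))$, which is the required form with $c_0 = N\cdot\E(\tr(W))$ and $c_1 = 1$, $W_1 = \widetilde W$.

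For the inductive step $k \geq 1$, I split into two sub-cases. First, if there is some position $i$ (cyclically) with $A_i = I$ and $(\epsilon_i, \eta_i) = -(\epsilon_{i-1}, \eta_{i-1})$, then the identity $U^{(\epsilon, \eta)} U^{(-\epsilon, -\eta)} = I$ (which follows directly from the definitions of $U^{(\epsilon, \eta)}$) allows me to collapse the pair of unitaries, merging the neighbouring $A_{i-1}$ and $A_{i+1}$ into a single $\cA$-monomial, and obtain a new word with $k - 2$ unitaries. The induction hypothesis applies. Second, if no such collapsible configuration exists, then for each $A_i$ that is an $\cA$-monomial I write $A_i = \widetilde A_i + \E(\tr(A_i)) I$ with $\widetilde A_i$ centered. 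Expanding the resulting product produces $2^\ell$ monomials (where $\ell$ is the number of non-identity $A_i$'s), each with coefficient a product of factors $\E(\tr(A_i))$. The term where every $A_i$ is replaced by $\widetilde A_i$ is, by our non-collapsibility assumption, already a simplified word and contributes directly. Every other term has at least one new $I$ and may create a collapsible configuration, to which the first sub-case (hence the induction hypothesis) applies, decreasing the number of unitary factors by at least $2$.

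The strong induction on $k = n(W)$ gives termination and produces a finite expression of the required form; collecting the simplified words yields $W_1, \dots, W_m$ and collecting the scalar factors (each a product of expectations $\E(\tr(\cdot))$ of $\cA$-monomials, possibly multiplied by a power of $N$ coming from the base case) yields the coefficients $c_0, \dots, c_m$. The main thing to be careful about is the bookkeeping during step 5, specifically checking that in the purely-centered term no collapsibility is created (this is immediate from the hypothesis of the sub-case), so that this term is genuinely simplified and does not require further recursion; all other new branches provably strictly reduce $n(W)$, which is what keeps the induction honest.
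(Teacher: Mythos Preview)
Your overall strategy --- reduce to monomials, then induct by centering the $A_i$'s and collapsing adjacent inverse pairs of unitaries --- is the same as the paper's.  The paper organizes the induction slightly differently: it inducts on the pair $(m,n)$, where $m$ is the number of unitary factors and $n$ is the number of $A_i$'s that fail the conditions of Definition~\ref{defn:016}, and it centers \emph{one} offending $A_i$ at a time.  That way each of the two terms produced (the $\centre{A_i}$ term and the $\E(\tr(A_i))\cdot I$ term) visibly has either smaller $n$ or smaller $m$, and no further case analysis is needed.

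Your all-at-once expansion into $2^\ell$ terms is fine in principle, but your handling of the ``other'' terms has a gap.  You write that every term other than the fully centered one ``may create a collapsible configuration, to which the first sub-case \ldots\ applies, decreasing the number of unitary factors by at least~2.''  This is not always true: if a new $I$ appears at a position $i$ where $(\epsilon_{i-1},\eta_{i-1}) \neq -(\epsilon_i,\eta_i)$, then nothing collapses there.  Concretely, take $W = A_1 U A_2 U$ with both unitaries equal to $U^{(1,1)}$; none of the four expanded terms is collapsible.  Your induction on $k$ alone therefore does not progress on such terms.  The fix is immediate once you see it: if \emph{none} of the new $I$'s sits at a collapsible position, then every $I$ in the word (old or new) satisfies the ``allowed'' condition and every non-$I$ factor is a centered $\widetilde A_i$, so the term is already simplified and needs no further recursion.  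You should state this explicitly; as written, the argument does not terminate on those branches.  (Alternatively, adopt the paper's two-parameter induction $(m,n)$ and center one $A_i$ at a time, which sidesteps the issue entirely.)
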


\begin{proof}
If $ W $ does not contain elements from $ \mathfrak{U} $,
the result is trivial, since $ W = \centre{W} + \E(\tr(W) )
$, where, for a random matrix $ A $, we take $ \centre{A} =
A - \E(\tr(A)) $.  If $ W $ contains both elements from $
\cA $ and $ \mathfrak{U}$, due to linearity and traciality,
we may suppose that
\[
W = A_1 U^{( \epsilon_1, \eta_1)} A_2 U^{( \epsilon_2,
  \eta_2 )} \cdots A_m U^{ ( \epsilon_m, \eta_m )}
\]
with each $ \epsilon_i, \eta_i \in \{ -1, 1\} $ and each $
A_i $ a polynomial in matrices from $ \cA $ with complex
coefficients.
   
For $ W $ as above, let $ n $ the number of $ A_i $ not as
in Definition \ref{defn:016}. We will show the result by
induction on $ (m, n) $. If $ m = n = 1$, the result is
trivial, since
\[
     \Tr( A_1 U^{( \epsilon_1, \eta_1 )} ) = \Tr(
     \centre{A_1} U^{( \epsilon_1, \eta_1)}).
\]
For the inductive step, using again traciality, let us
suppose that $ A_m $ does not satisfy the conditions from
Definition \ref{defn:016}. Then
     \begin{align*}
      W = A_1 & U^{( \epsilon_1, \eta_1)}  A_2 U^{( \epsilon_2, \eta_2 )} \cdots
        A_{m-1} U^{ ( \epsilon_{ m -1}, \eta_{ m -1 } )}
        \centre{A_m } U^{ ( \epsilon_m, \eta_m )}  \\
         & + 
        \E( \tr(A_m) ) A_1 U^{( \epsilon_1, \eta_1)}  A_2 U^{( \epsilon_2, \eta_2 )} \cdots
                 A_{m-1} U^{ ( \epsilon_{ m -1}, \eta_{ m -1 } )}
                  U^{ ( \epsilon_m, \eta_m )}.
       \end{align*}
The number of $ A_i $'s from the first term of the
right-hand side not satisfying the conditions from
Definition \ref{defn:016} is strictly less than $n $. For
the second term, we distinguish two cases.  If $ ( \epsilon
_{ m -1}, \eta_{ m -1} ) = ( - \epsilon_m, -\eta_m ) $, then
       \begin{align*}
       \Tr(A_1 U^{( \epsilon_1, \eta_1)}  A_2 U^{( \epsilon_2, \eta_2 )}& \cdots
                        A_{m-1} U^{ ( \epsilon_{ m -1}, \eta_{ m -1 } )}
                         U^{ ( \epsilon_m, \eta_m )} )\\
                         &=
        \Tr(A_{ m -1} A_1 U^{( \epsilon_1, \eta_1)}\cdots A_{m-2} U^{ ( \epsilon_{ m -2}, \eta_{ m -2 } )})
                \end{align*}
and the word from the right-hand side has length strictly less than $ 2m $.
If 
 $ ( \epsilon _{ m -1}, \eta_{ m -1} ) \neq ( - \epsilon_m, -\eta_m ) $
then
   \begin{align*}
       A_1 U^{( \epsilon_1, \eta_1)}  & A_2 U^{( \epsilon_2, \eta_2 )} \cdots
                        A_{m-1} U^{ ( \epsilon_{ m -1}, \eta_{ m -1 } )}
                         U^{ ( \epsilon_m, \eta_m )} \\
                         &=
        A_1 U^{( \epsilon_1, \eta_1)}  A_2 U^{( \epsilon_2, \eta_2 )}\cdots
                        A_{m-1} U^{ ( \epsilon_{ m -1}, \eta_{ m -1 } )} I
                         U^{ ( \epsilon_m, \eta_m )}
                \end{align*}
and the number of $ A'i$'s not satisfying the conditions
from Definition \ref{defn:016} in word from the right-hand
side is $ n - 1 $.
\end{proof}

\begin{theorem}\label{thm:distrib}
If $ \cA $ is an ensemble of random matrices which is
independent from $ \mathfrak{U} $ and has the $ t $-bounded
cumulants property, then the ensemble $ \mathfrak{ U } \cup
\cA $ also has the $ t $-bounded cumulants property.

\end{theorem}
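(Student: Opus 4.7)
The plan is to reduce the problem to traces of simplified words via Lemma~\ref{lemma:017}, then handle those by the unitary Weingarten expansion (Proposition~\ref{prop:u2}) and the power-of-$N$ count coming from equation~(\ref{eq1}). Since $\mathfrak U$ is closed under transposition, the $t$-bounded cumulants property for $\mathfrak U \cup \cA$ requires bounding $\E(\tr(W))$ and $\cc_r(\Tr(W_1),\dots,\Tr(W_r))$ uniformly in $N$ for every polynomial $W$ in $\mathfrak U \cup \cA \cup \cA^t$. Applied with the ensemble $\cA \cup \cA^t$ (which has the bounded cumulants property by the $t$-bounded cumulants hypothesis on $\cA$), Lemma~\ref{lemma:017} expresses $\Tr(W)$ as $c_0 + \sum_k c_k \Tr(W_k)$, where each $W_k$ is simplified and each coefficient $c_k$ is a product of expectations of normalized traces of polynomials in $\cA \cup \cA^t$, hence uniformly bounded. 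Multilinearity of the classical cumulants reduces condition (\textit{ii}) of Definition~\ref{defn:1} to the corresponding condition on traces of simplified words.

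For a simplified word $W = A_1 U^{(\epsilon_1,\eta_1)} \cdots A_m U^{(\epsilon_m,\eta_m)}$, I would expand $\Tr(W)$ as a sum over matrix indices and, using independence of $\mathfrak U$ and $\cA$, apply Proposition~\ref{prop:u2} to the $U$-expectation to obtain
\[
\E(\Tr(W)) \;=\; \sum_{p,q \in \cP_2^{\boldsymbol\eta}(m)} \Phi_N(p,q)\,\E\bigl(\Tr_{\pi_{p,q}}(A_1^{(\epsilon'_1)},\dots,A_m^{(\epsilon'_m)})\bigr),
\]
where the permutation $\pi_{p,q} \in S_m$ and the signs $\epsilon'_k$ are read off from $(p,q,\epsilon,\eta)$ exactly as in Lemma~\ref{lemma:pi_epsilon}. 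Combining the Weingarten asymptotic $\Phi_N(p,q) = O(N^{-m + \#(pq)/2})$ from~(\ref{eq1}), the bound $\E(\Tr_{\pi_{p,q}}(\cdots)) = O(N^{\#(\pi_{p,q})})$ from Remark~\ref{rem:ord}, and the standard Euler inequality $\#(\pi_{p,q}) + \#(pq)/2 \leq m+1$ for the genus of the ribbon graph produced by a unitary integral, each term is at most $O(N)$, so $\E(\tr(W)) = O(1)$ and condition~(\textit{i}) holds.

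For classical cumulants $\cc_r(\Tr(W_1),\dots,\Tr(W_r))$ with $r\ge 2$, I would apply the same Weingarten expansion to each $\Tr(W_j)$ to rewrite the cumulant as a sum, indexed by tuples of pairings $(p_j,q_j)_{j=1}^r$, of $\prod_j \Phi_N(p_j,q_j)$ multiplied by a joint classical cumulant of $\cA$-traces $\Tr_{\pi_j}(\cdots)$. Proposition~\ref{prop:ord} and the centering of the $A_i$'s bound this $\cA$-cumulant, and crucially force it to vanish unless the supporting partition on the $r$ traces is connected, which saves a factor of $N^{r-1}$ compared to the naive product bound; a parallel improvement arises from the Weingarten pairings whenever they connect the $r$ index loops. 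A joint Euler inequality applied to the combined pairing-and-permutation data across all $r$ traces then yields $\cc_r(\Tr(W_1),\dots,\Tr(W_r)) = O(1)$, proving condition~(\textit{ii}).

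The principal obstacle is precisely this cumulant estimate: one must carefully track how the connectivity forced by the classical cumulant is distributed between the Weingarten pairings $(p_j,q_j)$ and the joint $\cA$-cumulant, and verify that the two sources of connectivity jointly saturate the genus bound to produce the crucial $N^{r-1}$ saving. The bookkeeping is most delicate when the $\cA$-cumulants link blocks that are not already linked by the pairings, and vice versa.
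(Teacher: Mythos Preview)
Your reduction via Lemma~\ref{lemma:017} is correct and matches the paper. After that, however, your strategy diverges from the paper's and contains a real gap.

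The paper does \emph{not} bound the cumulants $\cc_r(\Tr(W_1),\dots,\Tr(W_r))$ directly. Instead it proves the single moment estimate
\[
\E\bigl(\Tr(W_1)\cdots\Tr(W_R)\bigr)=O(1)
\]
for any collection of simplified words, and then invokes M\"obius inversion (equation~(\ref{eq:cumulant-moment})) to conclude that every cumulant is $O(1)$. This is much easier, because for a product one may legitimately write $\E=\E_{\cA}\E_U$ and apply Proposition~\ref{prop:u2} once, to a single \emph{global} pair of pairings $p,q\in\cP_2^{\boldsymbol\eta}(M)$ on the totality of $U$-letters. Your proposal to ``apply the same Weingarten expansion to each $\Tr(W_j)$'' and index by tuples $(p_j,q_j)_{j=1}^r$ is not valid: the same Haar matrix $U$ appears in every $W_j$, so $\E_U\bigl[\prod_j\Tr(W_j)\bigr]$ does not factor, and the Weingarten pairings genuinely mix $U$'s coming from different $W_j$'s. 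Consequently there is no decomposition of the cumulant as a sum over $r$-tuples of local pairings times an $\cA$-cumulant, and the ``joint Euler inequality'' you invoke has no obvious formulation.

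The second missing ingredient is the role of centering. Your single-trace estimate via $\#(\pi_{p,q})+\#(pq)/2\le m+1$ would give at best $\E(\Tr(W))=O(N)$; the paper needs and proves $\E(\Tr(W_1)\cdots\Tr(W_R))=O(1)$, which is sharper by a full factor of $N^R$. The mechanism is combinatorial, not a genus inequality: one checks that whenever $(i)$ is a singleton of $\pi(p,q)$, the simplified-word condition forces $(\epsilon_i,\eta_i)=-(\epsilon_{\gamma(i)},\eta_{\gamma(i)})$ and hence $B_i$ is centered, so $\E(\Tr(B_i))=0$. By Proposition~\ref{prop:ord} this kills the singleton contribution, yielding $\E(\Tr_{\pi(p,q)}(\cdots)\,\mathfrak b)=O(N^{\#(\pi(p,q))-s})$ with $s$ the number of singletons. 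Since a permutation of $[M]$ with $s$ fixed points has at most $s+(M-s)/2$ cycles, the non-singleton count is at most $M/2$; together with $\Phi_N(p,q)=O(N^{-M+\#(pq)/2})$ and the trivial $\#(pq)\le M$, each term is $O(1)$. This centering argument is the heart of the proof and is absent from your sketch.
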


\begin{proof}

Following Definition \ref{defn:1}, we must show that for $R
\geq 2$ and any polynomials with complex coefficients $ W_1,
\dots, W_R $ in elements from $ \cA \cup \cA^t $ and $
\mathfrak{U } $, we have that
\begin{enumerate}

\item[\raise0.12em\hbox{$\centerdot$}]
$\sup_N |\E(\tr(W_i))| < \infty$ for each $i$ and

\item[\raise0.12em\hbox{$\centerdot$}]
$\sup_N |k_R(\Tr(W_1), \dots, \Tr(W_R))| < \infty$.  

\end{enumerate}
From Lemma \ref{lemma:017}, we have that 
\[
 \Tr(W_i ) = c_{i, 0} + c_{i, 1} \Tr(W_{i, 1}) + \cdots  + c_{ i, m(i) } \Tr( W_{ i, m(i) } )
\]
where $ c_{i, k }$'s are polynomials in normalized traces of
products of elements from $ \cA $ and $ W_{i, k}$'s are
simplified words in $\cA \cup \cA^t $ and $ \mathfrak{U} $.
Since $ \cA $ has the $ t $-bounded cumulants property, each
$ c_{ i, k } $ is bounded as $ N \lra \infty $, therefore,
using the multilinearity of the cumulants, it suffices to
show the result for $ W_i $ simplified words in $
\cA\cup\cA^t $ and $ \mathfrak{U} $.  Therefore, we may
suppose that
\begin{align}
&W_i  = U^{(\epsilon_{M_{i-1}+1}, \eta_{M_{i-1}+1})}
B_{M_{i-1}+1} \cdots U^{(\epsilon_{M_i}, \eta_{M_i})}
B_{M_i},\label{eq:general-word} \text{ for\ } 1 \leq i \leq r \\
&W_ { r + k } = B_{ M + k }, \  \text{for } 1 \leq k \leq R - r, \nonumber
\end{align}
where $ M_0, M_1, \dots, M_r $ is a strictly increasing
sequence of positive integers with $ M_0 = 0 $, $ M_r = M $,
$ R $ is a positive integer greater than $ r $, $ (
\epsilon_i, \eta_i ) \in \{ - 1, 1 \}^2 $ and $ B_k $ are
centered polynomials in elements from $ \cA $ or identity
matrices, such that
\begin{itemize}
\item[$\centerdot$] for $ 1 \leq i \leq r $, if $ (
  \epsilon_{ M_{ i -1} + 1 }, \eta_{ M_{ i -1} + i } ) = ( -
  \epsilon_{ M_i }, - \eta_{ M_i } ) $, then\\ $ B_{ M_i }
  \neq I $
\item[$\centerdot$] for $ 1 \leq i \leq r $ and $ 0 \leq k
  \leq M_{ i + 1} - M_i -1$, if $ (\epsilon_{M_{i}+k},
  \eta_{M_{i}+k}) = ( - \epsilon_{ M_{i}+k + 1 } , - \eta_{
    M_{i} +k + 1 }) $ then $ B_{ M_i + k } \neq I $
\item[$\centerdot$] $ B_{ M + k } \neq I $,  for $ 1 \leq k \leq R-r $
\end{itemize}

We claim that it suffices to show that for $ W_1, \dots, W_R
$ as above we have
\begin{equation}\label{eq:moment-bound}
\E(\Tr(W_1) \cdots \Tr(W_R)) = O(1).
\end{equation}
Indeed, first note that (\textit{i}) is automatic because 
$\tr = N^{-1}\Tr$. 
For (\textit{ii}) let us recall some notation from [{\sc ns}].

Let $X_1, \dots, X_R$ be random variables and $\pi =
\{V_1, \dots, V_k\} \in \cP(R)$ be a partition of $[R]$. 
We let 
\[\E_\pi(X_1, \dots, X_R) = \prod_{V
  \in \pi}\E\big( \prod_{j\in V} X_j\big). \]
For example,
when $\pi = \{(1,3,5)(2,4)\}$, 
\[
\E_\pi(X_1, \ab \dots, X_5)
= \E(X_1X_3X_5)\ab \E(X_2 X_4).
\]
With this notation we can write the inverse of equation
(\ref{eq:moment-cumulant}) via the M\"obius function, $\mu$,
for the poset $\cP(R)$, see [{\sc ns}, Prop. 10.11 and
  Ex. 10.33]. Namely
\begin{equation}\label{eq:cumulant-moment}
k_R(X_1, \dots, X_R) =
\sum_{\pi \in \cP(R)} \mu(\pi, 1_R) \E_\pi(X_1, \dots, X_R).
\end{equation}
If (\ref{eq:moment-bound}) holds then for all $\pi$ we have that
\[
\E_\pi(\Tr(W_1) \cdots \Tr(W_R)) = O(1).
\]
Hence by equation (\ref{eq:cumulant-moment}) we get
(\textit{ii}).

Thus we must show that for every $\{l_i\}_i, R, \{
(\epsilon_i, \eta_i)\}_i$, $\{b_i\}_i$ satisfying
(\textit{a}), (\textit{b}), and (\textit{c}) we have
equation (\ref{eq:moment-bound}).

Let $(\epsilon_1, \eta_1), \dots, (\epsilon_r, \eta_r) \in
\{-1,1\}^2$.  Denote the $ ( i, j ) $-entry of $ B_k $ by $
b_{ i, j }^{ ( k ) } $. Let us find an expansion
for \[\Tr(U^{(\epsilon_1, \eta_1)} B_1 \cdots
U^{(\epsilon_k, \eta_k)} B_k).\] We shall let $\gamma = (1,
2, 3, \dots, k)$ be the permutation of $[k]$ which has just
one cycle. We have that
\begin{eqnarray*}\lefteqn{
\Tr(U^{(\epsilon_1, \eta_1)} B_1 \cdots U^{(\epsilon_k,
  \eta_k)} B_k)}\\ & = & \mathop{\sum_{i_1, \dots,
      i_k}}_{i_{-1}, \dots, i_{-k}} \big(U^{(\epsilon_1,
    \eta_1)}\big)_{i_1i_{-1}} (B_1)_{i_{-1}i_2} \cdots
  \big(U^{(\epsilon_k, \eta_k)}\big)_{i_ki_{-k}}
  (B_k)_{i_{-k}i_1}.
\end{eqnarray*}

Now $\big(U^{(\epsilon_l, \eta_l)}\big)_{i_li_{-l}} = u_{j_l
  j_{-l}}^{(\eta_l)}$ where $j_l = i_{\epsilon_l l}$. Also
$(B_l)_{i_{-l}i_{\gamma(l)}}= b^{(l)}_{j_{\phi(l)},
  j_{\phi(-l)}}$ where $\phi(l) = -\epsilon_l l$ and
$\phi(-l) = \epsilon_{\gamma(l)} \gamma(l)$, for $1 \leq l
\leq k$.  
It is easy to check that $\phi:[\pm M] \rightarrow [\pm M]$
is a bijection, so we shall regard $\phi$ as a permutation
of $[\pm M]$. Indeed, letting $\epsilon$ be the permutation
of $[\pm M]$ given by $\epsilon(k)= \epsilon_k k$, we may
write $\phi = \epsilon \gamma \delta$ using the notation of
\S\ref{s22}. Thus we may write
\begin{eqnarray*}\lefteqn{
\Tr(U^{(\epsilon_1, \eta_1)} B_1 \cdots U^{(\epsilon_k,
  \eta_k)} B_k)}\\ & = & \mathop{\sum_{j_1, \dots,
      j_k}}_{j_{-1}, \dots, j_{-k}} u^{(\eta_1)}_{j_1j_{-1}}
  b^{(1)}_{j_{\phi(1)}j_{\phi(-1)}} \cdots
  u^{(\eta_k)}_{j_kj_{-k}}
  b^{(k)}_{j_{\phi(k)}j_{\phi(-k)}}.
\end{eqnarray*}
Now let us return to $\E(\Tr(W_1) \cdots \Tr(W_R)) =
\E(\Tr(W_1) \cdots \Tr(W_M) \mfb)$ where $\mfb =
\Tr(B_{M+1})\cdots \Tr(B_{ M+R- r }) $. Now we let $\gamma$
be the permutation with cycle decomposition $(1, \dots,
M_1)(M_1+1, \dots, M_2) \cdots (M_{r-1}+1, \dots, M)$. Then
we have, using the same reasoning as above,
\begin{eqnarray*}\lefteqn{
\E(\Tr(W_1) \cdots \Tr(W_R)) }\\
& = &
\mathop{\sum_{j_1, \dots, j_M}}_{j_{-1}, \dots, j_{-M}} 
\kern-1em\E\big(
u^{(\eta_1)}_{j_1j_{-1}} b^{(1)}_{j_{\phi(1)}j_{\phi(-1)}} \cdots
u^{(\eta_M)}_{j_Mj_{-M}} b^{(M)}_{j_{\phi(M)}j_{\phi(-M)}}\mfb\big) \\
& = &
\mathop{\sum_{j_1, \dots, j_M}}_{j_{-1}, \dots, j_{-M}} 
\kern-1em\E\big(
u^{(\eta_1)}_{j_1j_{-1}} \cdots u^{(\eta_M)}_{j_Mj_{-M}}\big)
\E\big(
b^{(1)}_{j_{\phi(1)}j_{\phi(-1)}} \cdots 
b^{(M)}_{j_{\phi(M)}j_{\phi(-M)}}\mfb\big). 
\end{eqnarray*}
We let $\bj =(j_{-M}, \dots, j_{-1}, j_1, \dots,
j_M)$. Recall that we let $\delta$ be the permutation of
$[\pm M]$ given by $\delta(k) = - k$. Also if $\pi$ is a
permutation of $[M]$ we make $\pi$ a permutation of $[\pm
  M]$ by setting $\pi(-k) = -k$ for $k > 0$. Then for
pairings $p, q \in \cP_2(M)$, $p\delta q \delta$ is a
permutation of $[\pm M]$. By Proposition \ref{prop:u2} we
can write
\[
\E\big( u^{(\eta_1)}_{j_1j_{-1}} \cdots
u^{(\eta_M)}_{j_Mj_{-M}}\big) = \sum_{p,q \in \cP_2^\eta(M)}
\delta ^{\bj}_{\bj\circ p\delta q \delta} \Phi_N(p,q).
\]
Thus
\begin{eqnarray*}\lefteqn{
\E(\Tr(W_1) \cdots \Tr(W_R)) }\\
& = &
\sum_{p,q \in \cP_2^{ \boldsymbol {\eta }  }(M)} \Phi_N(p,q)
\sum_{\bj}
\delta ^{\bj}_{\bj\circ p\delta q \delta}
\E\big(
b^{(1)}_{j_{\phi(1)}j_{\phi(-1)}} \cdots 
b^{(M)}_{j_{\phi(M)}j_{\phi(-M)}}\mfb\big).
\end{eqnarray*}

Let $\tau = \phi^{-1} p \delta q \delta \phi \in
\cP_2(\pm M)$. Then
\[
\sum_{\bj = \bj\circ p\delta q\delta}
b^{(1)}_{j_{\phi(1)}j_{\phi(-1)}} \cdots
b^{(M)}_{j_{\phi(M)}j_{\phi(-M)}} = \sum_{\bi =\bi \circ
  \tau} b^{(1)}_{i_1i_{-1}} \cdots b^{(M)}_{i_Mi_{-M}}.
\]
From Lemma \ref{lemma:pi_epsilon} (see [{\sc mp}, Lemma 5])
there exist a permutation $\pi(p, q) = \pi(\tau)$ from $
S_M$ and $\lambda \in \{-1, 1\}^M$ such that
\[
\sum_{\bi = \bi \circ  \tau} 
b^{(1)}_{i_1i_{-1}} \cdots b^{(M)}_{i_Mi_{-M}}
= \Tr_{\pi(p, q)}(B_1^{(\lambda_1)}, \dots, B_M^{(\lambda_M)}),
\]
with $\pi(p,q)$ constructed as in \S\ref{s22}, (see the discussion
preceding Lemma \ref{lemma:pi_epsilon}). 
Thus
\begin{eqnarray}\label{eq:stepone}\lefteqn{
\E(\Tr(W_1) \cdots \Tr(W_R)) }\\
& = &
\sum_{p,q \in \cP_2^\eta(M)} \Phi_N(p,q)
\E(\Tr_{\pi(p, q)}(B_1^{(\lambda_1)}, \dots, B_M^{(\lambda_M)}) \mfb).
\notag
\end{eqnarray}
Let us note that if $(i)$ is a singleton of $\pi(p, q)$
i.e. a cycle of length 1, then $(i)(-i)$ are cycles of
$\tilde{p}\delta$ where $\tilde{p}= \phi^{-1}p \delta q
\delta \phi$ and thus $(i, -i)$ is a cycle of $\tilde{p}$
and hence $(\phi(i), \phi(-i))$ is a cycle of $p\delta
q\delta$, which in turn implies that $\phi(i)$ and
$\phi(-i)$ have the same sign, i.e. either both are positive
or both negative. Suppose that both $\phi(i) = -\epsilon_i
i$ and $\phi(-i) = \epsilon_{\gamma(i)} \gamma(i)$ are
positive then $\epsilon_i = - \epsilon_{\gamma(i)}$. Also
$p(\phi(i)) = \phi(-i)$ implies that $\eta_i= \eta_{\phi(i)}
= - \eta_{\phi(-i)} = -\eta_{\gamma(i)}$, because $p \in
\cP^\eta(M)$. Thus we have $(\epsilon_i, \eta_i) = -
(\epsilon_{\gamma(i)}, \eta_{\gamma(i)})$. When both
$\phi(i)$ and $\phi(-i)$ are negative we get the same
conclusion: $(\epsilon_i, \eta_i) = - (\epsilon_{\gamma(i)},
\eta_{\gamma(i)})$ (except working with $q$). Thus by
condition (\textit{c}), we have that when $(i)$ is a
singleton of $\pi(p,q)$, we have $\E(\Tr(B_i)) = 0$.

Given $p, q \in \cP_2(M)$, we claim that
\[
\E(\Tr_{\pi(p, q)}(B_1^{(\lambda_1)}, \dots,
B_M^{(\lambda_M)}) \mfb) = O(N^{M/2})
\]
with equality only when $\pi(p,q)$ is a pairing of $[M]$.
Indeed, let $s$ be the number of singletons of $\pi(p,q)$.
By the previous paragraph if (\textit{i}) is a singleton of
$\pi(p,q)$ we have $\E(\Tr(B_i)) = 0$. So by Proposition
\ref{prop:ord}(\textit{ii})
\[
\E(\Tr_{\pi(p, q)}(B_1^{(\lambda_1)}, \dots,
B_M^{(\lambda_M)}) \mfb) = O(N^{t})
\]
where $t = \#(\pi(p,q)) - s$. Thus $t \leq (M - s)/2 \leq
M/2$ with equality only if all cycles of $\pi(p, q)$ have
length 2, i.e. $\pi(p, q)$ is a pairing. Recall from
equation (\ref{eq1}) that $\Phi_N(p,q) =
O(N^{-M+\#(pq)/2})$. So
\begin{equation}\label{eq:end_of_thm_16}
\Phi_N(p,q)\E(\Tr_{\pi(p, q)}(B_1^{(\lambda_1)}, \dots,
B_M^{(\lambda_M)}) \mfb) = O(N^{-M+\#(pq)/2+t}). 
\end{equation}
Combining this with (\ref{eq:stepone}) we get that 
\[
\E(\Tr(W_1) \cdots \Tr(W_R)) = O(1)
\]
as claimed. 
\end{proof}

\begin{remark}\label{remark:mathcalE}
Let us return to equation (\ref{eq:stepone}) to extract a
few observations. Let $ \boldsymbol{A} ( W_1, \dots, W_r)
=\{ p \in \cP_2^{\boldsymbol{\eta}} ( M ) : \pi ( p, p )
\ \text{is a pairing} \}.$ For $ p \in \boldsymbol{A} ( W_1,
\dots, W_r ) $ , we have that $ \Phi_N ( p, p ) \ab=
O(N^{\frac{-M}{2}} ) $. By Proposition
\ref{prop:ord}\,(\textit{ii}) we have for $p \in
\boldsymbol{A} ( W_1, \dots, W_r)$ that
\begin{eqnarray*}\lefteqn{
N^{-M/2}
\E \big( \Tr_{ \pi ( p, p ) }
( B_1^{ ( \lambda_1)} \cdots B_M^{( \lambda_M ) } ) \mfb ) } \\
&  = &
N^{-M/2}  \E ( \Tr_{ \pi ( p, p ) }(
B_1^{ ( \lambda_1)} \cdots B_M^{ ( \lambda_M ) } )\big) \cdot \E
( \mfb ) + O(N^{-M/2-1}).
\end{eqnarray*}
Denoting
\[
\cE ( p, W_1, \dots, W_r)= \Phi_N ( p, p ) \E ( \Tr_{ \pi (
  p, p ) }( B_1^{ ( \lambda_1)} \cdots B_M^{ ( \lambda_M ) }
) ),
\]
the equality above and equations (\ref{eq:stepone}),
and (\ref{eq:end_of_thm_16}) give that
\begin{eqnarray}\label{eq:mathcalE2}\lefteqn{%
\E( \Tr(W_1) \cdots \Tr(W_R) ) }\\ & = & \sum_{ p \in
    \boldsymbol{A} ( W_1, \dots, W_r) } \cE (p, W_1, \dots,
  W_r ) \E (\mfb ) + o(1). \notag
\end{eqnarray}
Note also that $ \cE(p, W_1, \dots, W_r ) = O(1)$ for all $
p \in \boldsymbol{A} ( W_1, \dots, W_r ) $.
\end{remark}
\qed


\begin{corollary}\label{cor:ut}
The ensembles $ \{ U_N, U_N^\ast \}_N $ and $ \{ U_N^t,
\overline{U}_N \}_N $ are asymptotically free.
\end{corollary}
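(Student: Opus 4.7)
The strategy is to combine the Weingarten calculus from Proposition \ref{prop:u2} with the asymptotic analysis carried out in the proof of Theorem \ref{thm:distrib}. By Definition \ref{defin:03}(\textit{i}), it suffices to show that for every centered alternating $n$-tuple $(D_1,\dots,D_n)$ of polynomials in $\cA := \{U,U^*\}$ and $\cB := \{U^t,\overline{U}\}$, one has $\lim_N \E(\tr(D_1 \cdots D_n)) = 0$. By linearity I would reduce to the case where each $D_i = M_i - \E(\tr(M_i))I$ with $M_i$ a monomial in the unitaries of its ensemble.

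The crucial structural feature to exploit is the following: writing $M_i = U^{(\epsilon_{i,1},\eta_{i,1})} \cdots U^{(\epsilon_{i,k_i},\eta_{i,k_i})}$, the pairs occurring in an $M_i\in\cA$ satisfy $\epsilon=\eta$, while the pairs occurring in an $M_i\in\cB$ satisfy $\epsilon=-\eta$. Concatenating the $M_i$'s into a single word of length $M$ therefore produces a sign pattern that alternates in blocks. Expanding $D_1\cdots D_n$ using the centering identities and applying Proposition \ref{prop:u2} to each resulting sub-word (as in the derivation of equation (\ref{eq:stepone})) yields an expression
\[
\E(\Tr(D_1 \cdots D_n)) = \sum_{T} c_T \sum_{p,q \in \cP_2^{\boldsymbol{\eta}_T}(M_T)} \Phi_N(p,q)\, N^{\#(\pi(p,q))},
\]
where $T$ ranges over the subsets of $[n]$ on which the monomial (rather than the scalar $-\E(\tr(M_i))$) is retained, $c_T$ is the corresponding product of expectations of normalized traces, and $M_T\leq M$ is the length of the resulting sub-word.

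By equation (\ref{eq1}) each inner term is $O(N^{-M_T+\#(pq)/2+\#(\pi(p,q))})$, so, as in Remark \ref{remark:mathcalE}, the only terms that can contribute at order $N$ to $\E(\Tr(\cdots))$ are those with $q=p$ and $\pi(p,p)$ itself a pairing of $[M_T]$. For any such $p$ I would invoke the observation from the proof of Theorem \ref{thm:distrib} that a singleton $(i)$ of $\pi(p,p)$ forces $(\epsilon_i,\eta_i) = -(\epsilon_{\gamma(i)},\eta_{\gamma(i)})$, with $\gamma$ the cyclic permutation; under the $\eta$-compatibility $\eta\circ p=-\eta$, this equation is incompatible with $i$ and $\gamma(i)$ lying in blocks of different ensemble types, since it would force $\eta_{\gamma(i)}=0$. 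Hence singletons can only be confined within a single $M_i$, and the centering of $M_i$ then kills the corresponding contribution. A parallel cycle-count shows that pairings $p$ genuinely linking distinct blocks are strictly subleading, each such cross-block pair decreasing the exponent $-M_T+\#(pp)/2+\#(\pi(p,p))$ by at least one. Verifying this last count cleanly, case by case on the alternation pattern and across all subsets $T$, is the main obstacle; once it is done one obtains $\E(\Tr(D_1\cdots D_n)) = o(N)$ and the corollary follows.
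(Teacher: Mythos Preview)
Your approach is in spirit the same as the paper's --- both rest on the Weingarten estimate from Theorem~\ref{thm:distrib} --- but you have not seen the two reductions that turn this into a two-line argument, and the ``main obstacle'' you flag is in fact a non-issue.

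First, a monomial in $\{U,U^*\}$ collapses to $U^p$ for some integer $p$, and $\E(\tr(U^p))=0$ whenever $p\neq 0$; likewise a monomial in $\{U^t,\overline U\}$ collapses to $(U^t)^p$. Hence, after reduction, a centered polynomial in one ensemble is a linear combination of $U^p$ (resp.\ $(U^t)^p$) with $p\neq 0$, and your expansion over subsets $T$ evaporates: every $c_T$ with $T\neq[n]$ vanishes.

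Second, the resulting word $U^{p_1}(U^t)^{p_2}\cdots (U^t)^{p_{2M}}$ with all $p_i\neq 0$ is already a \emph{simplified word} in the sense of Definition~\ref{defn:016}, with every $B_i=I$. The check is exactly your sign observation: at every position one has $(\epsilon_i,\eta_i)\neq -(\epsilon_{\gamma(i)},\eta_{\gamma(i)})$, both within a block (same $(\epsilon,\eta)$) and across a block boundary (your $\eta=0$ contradiction). So Theorem~\ref{thm:distrib}, applied with $\cA$ the ensemble of identity matrices, yields directly
\[
\E\big(\Tr(U^{p_1}(U^t)^{p_2}\cdots (U^t)^{p_{2M}})\big)=O(1),
\]
hence the normalized trace is $O(N^{-1})$ and we are done. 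This is precisely the paper's proof.

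The claim you were worried about --- that pairings $p$ linking distinct blocks are ``strictly subleading'' --- is not needed and is not even correct as stated: every term in the Weingarten expansion is already $O(1)$ for the unnormalized trace, and that is all one needs. There is no leading term to cancel; the centering has already done its job by forcing $p_i\neq 0$, which is what makes the word simplified.
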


\begin{proof}
By taking $ \cA $ to be the ensemble given by the identity
matrices, property (\textit{i}) in the proof of Theorem
\ref{thm:distrib} implies that
\[
 \E \left( \Tr\left( U^{ p_1} \cdot (U^t)^{ p_2 } \cdots U^{
   p_{ 2M - 1 } } (U^t)^{ p_{ 2 M } } \right) \right) =
 O(1).
\]
Since both $ \{ U_N, U_N^\ast \}_N $ and $ \{ U_N^t,
\overline{U}_N\}_N $ satisfy condition (\textit{i}$^\prime$)
from Definition \ref{defn:1}, it suffices to show that for
any non-zero integers $ p_1, p_2, \dots,\ab p_{2M} $, we
have
\[
\lim_{ N \rightarrow \infty } \E \left(\tr\left( U^{ p_1}
\cdot (U^t)^{ p_2 } \cdots U^{ p_{ 2M - 1 } } (U^t)^{ p_{ 2
    M } } \right) \right) = 0.
\]
but this follows from our first remark, hence the conclusion.
\end{proof}

\begin{remark}\label{remark:free}
For $ U $ a Haar distributed random unitary, $ \E ( \tr (
U^n ) ) = 0 $ for all nonzero integers $ n $, therefore the
definition of free independence and Corollary \ref{cor:ut}
give
\[
\lim_{ N \lra \infty } \E( \tr ( (U^{( \varepsilon, \eta )}
)^m ( U^{ ( \varepsilon^\prime, \eta^\prime )} )^n )) =
\delta_m^n \delta_{ \varepsilon}^{ -\varepsilon^\prime }
\delta_{ \eta }^{-\eta^\prime}.
\]
\end{remark}

\begin{corollary}\label{thm:bdd}
If $ \cA $ is an ensemble of random matrices with the $ t
$-bounded cumulants property and is independent from $
\mathfrak{U} $, then $ \mathfrak{U}$ and $\cA $ satisfy
equation $( \ref{eq:02} ) $ from Definition \ref{defin:03}.
\end{corollary}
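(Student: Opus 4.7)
The plan is to apply the asymptotic Weingarten expansion of Remark~\ref{remark:mathcalE} to both $\E(\Tr(W_1)\Tr(W_2))$ and to $\E(\Tr(W_1))\E(\Tr(W_2))$, where $W_1 = C_1 \cdots C_m$ and $W_2 = D_1 \cdots D_n$ are the cyclically alternating centered products appearing in equation~(\ref{eq:02}), and then to match the surviving leading-order terms with the two families of spoke-diagram products on the right-hand side of~(\ref{eq:02}). By multilinearity of covariance, together with expansion of each $\mathfrak{U}$-polynomial block of $C_i$ and $D_j$ into a sum of monomials in $U^{(\epsilon,\eta)}$, I may assume throughout that $W_1$ and $W_2$ are simplified words of the alternating form~(\ref{eq:general-word}); centering of the $C_i$'s and $D_j$'s forces condition~(\textit{c}) of Theorem~\ref{thm:distrib} to hold for the $\cA$-blocks appearing.

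With this reduction, applying~(\ref{eq:mathcalE2}) with $R = r = 2$ (so $\mfb = 1$) gives
\[
\E(\Tr(W_1)\Tr(W_2)) = \sum_{p \in \boldsymbol{A}(W_1,W_2)} \cE(p,W_1,W_2) + o(1),
\]
together with the one-word analogues $\E(\Tr(W_i)) = \sum_{p_i \in \boldsymbol{A}(W_i)} \cE(p_i,W_i) + o(1)$. I then split $\boldsymbol{A}(W_1,W_2)$ into \emph{disconnected} pairings (those never pairing a unitary slot in $W_1$ with one in $W_2$) and \emph{connecting} pairings. Disconnected pairings factor as $p = p_1 \sqcup p_2$ with $p_i \in \boldsymbol{A}(W_i)$, and a direct computation using Proposition~\ref{prop:ord}(\textit{ii}) should give $\cE(p,W_1,W_2) = \cE(p_1,W_1)\cE(p_2,W_2) + o(1)$, so that the disconnected sum asymptotically equals $\E(\Tr(W_1))\E(\Tr(W_2))$ and cancels in the covariance.

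The covariance is therefore asymptotically the sum of $\cE(p,W_1,W_2)$ over connecting $p$. A dimension count analogous to~(\ref{eq:end_of_thm_16}) will force the surviving connecting pairings to respect the macro-block structure of the cyclically alternating words, which in turn forces $m = n$ and produces the Kronecker $\delta_{m,n}$. When $m = n$, each surviving connecting $p$ induces a bijection between the $\mathfrak{U}$-blocks of $W_1$ and those of $W_2$, and the cyclically alternating constraint, combined with the requirement $p \in \cP_2^{\boldsymbol{\eta}}(M)$, restricts this bijection to $2n$ combinatorial types indexed by a cyclic shift $k \in \{1,\dots,n\}$ and a choice of orientation. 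Orientation-preserving pairings should produce the product $\prod_{i=1}^n \E(\tr(C_i D_{k-i}))$, while orientation-reversing ones should produce $\prod_{i=1}^n \E(\tr(C_i D^t_{i-k}))$, the transpose arising from the orientation reversal forced into the cycles of $\pi(p,p)$ through Lemma~\ref{lemma:pi_epsilon}.

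The main obstacle I anticipate is this last step of combinatorial bookkeeping: tracking, for each connecting $p$ and its associated sign sequence $(\lambda_i)$ supplied by Lemma~\ref{lemma:pi_epsilon}, how the Weingarten-weighted trace $\Phi_N(p,p) \E(\Tr_{\pi(p,p)}(B_1^{(\lambda_1)},\dots,B_M^{(\lambda_M)}))$ asymptotically factors into exactly one product of normalized traces $\E(\tr(C_i D_{k-i}))$ or $\E(\tr(C_i D^t_{i-k}))$. This is where Lemma~\ref{prop:u1} and the description of the cycles of $p\delta p\delta$ have to interact correctly with the cyclic structure imposed by $W_1$ and $W_2$, and I expect the argument to parallel the orthogonal-case computation in~\cite{mp}.
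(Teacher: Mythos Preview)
Your approach is a direct Weingarten-calculus computation that aims to identify the connecting pairings in $\boldsymbol{A}(W_1,W_2)$ with the spoke and reversed-spoke diagrams of equation~(\ref{eq:02}). This is plausible and in spirit would reproduce, in the unitary setting, the combinatorial analysis carried out for the orthogonal case in~\cite{mp}. The paper, however, takes a completely different and much shorter route: it observes that $\mathfrak{U}$ is \emph{orthogonally} invariant (Lemma~\ref{orthoginvar}), notes that $\mathcal{A}\cup\mathfrak{U}$ has the $t$-bounded cumulants property by Theorem~\ref{thm:distrib}, and then applies Theorem~\ref{thm:8}(\textit{ii}) directly with $\mathfrak{U}$ playing the role of the orthogonally conjugated ensemble. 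Because $\mathfrak{U}$ has the same joint distribution as $O\mathfrak{U}O^t$ for an independent Haar orthogonal $O$, the already-proved orthogonal result from~\cite{mp} yields equation~(\ref{eq:02}) for free, with no further pairing analysis. What the paper's approach buys is that the entire ``main obstacle'' you anticipate (matching connecting pairings to the two orientation classes of spoke diagrams and tracking the signs $\lambda_i$) is absorbed into the black box of Theorem~\ref{thm:8}; what your approach would buy, if completed, is an independent and self-contained derivation that does not rely on the orthogonal theory.

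One caution about your reduction step: expanding a centered $\mathfrak{U}$-polynomial $C_i$ into monomials does not produce centered pieces, and equation~(\ref{eq:02}) is only asserted for centered cyclically alternating tuples, so ``multilinearity'' alone does not justify passing to monomials. You would need to verify that the quantity $\text{LHS}-\text{RHS}$ of~(\ref{eq:02}) vanishes asymptotically for the non-centered monomial summands as well, or else carry the centering through the reduction more carefully (for instance via Lemma~\ref{lemma:017}, keeping track of the extra scalar terms $c_0$). This is fixable but adds to the bookkeeping you already flagged.
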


\begin{proof}
By Theorem \ref{thm:distrib} ${\cA} \cup \fU$ has a bounded
second order $t$-distribution. By Lemma \ref{orthoginvar},
$\fU$ is orthogonally invariant so we can apply part
(\textit{ii}) of Theorem \ref{thm:8} to conclude that $\fU$
and $\cA$ satisfy equation (\ref{eq:02}).

\end{proof}

\begin{lemma}\label{lemma:4-preliminary_to_thm_22}
Suppose $\cA$ is an ensemble of random matrices independent
from $ \mathfrak{U} $ and with the $ t $-bounded cumulants
property.  Let $ r \geq 3$ and $ m_0, m_1, \dots, m_r $ be a
strictly increasing sequence of positive integers such that
$ m_0 = 0 $ and let $W_1, \dots, W_ r $ be simplified words
in $ \cA $ and $ \mathfrak{U} $ such that each $W_i$ is a
product
\[
W_i = U^{(\epsilon_{m_i+1}, \eta_{m_i+1})} A_{m_i+1} \cdots
U^{(\epsilon_{m_{i+1}}, \eta_{m_{i+1}})} A_{m_{i+1}}
\]
for some $(\epsilon_i, \eta_i) \in \{ 1, -1 \}^2$.
Then
\[
\lim_{ N \lra \infty } k_r (\Tr(W_1), \dots, \Tr(W_r ) )  = 0
\]
\end{lemma}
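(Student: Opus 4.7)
The plan is to start from the classical moment-cumulant formula
\[
k_r\bigl(\Tr(W_1),\dots,\Tr(W_r)\bigr)=\sum_{\pi\in\cP(r)}\mu(\pi,1_r)\prod_{V\in\pi}\E\Bigl(\prod_{i\in V}\Tr(W_i)\Bigr)
\]
and expand each factor on the right using the asymptotic expansion (\ref{eq:mathcalE2}) of Remark \ref{remark:mathcalE}.  Because every $W_i$ contains at least one unitary from $\mathfrak U$, the auxiliary product $\mfb$ appearing in (\ref{eq:mathcalE2}) is empty and $\E(\mfb)=1$, so for each $V\subseteq[r]$ one obtains
\[
\E\Bigl(\prod_{i\in V}\Tr(W_i)\Bigr)=\sum_{p_V\in\boldsymbol{A}(W_V)}\cE(p_V,W_V)+o(1),
\]
the sum being over pairings of the positions belonging to the words indexed by $V$.

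The second step is to verify that $\cE(\cdot,W_1,\dots,W_r)$ is asymptotically multiplicative across the connected components of the pairing.  Each $p\in\boldsymbol{A}(W_1,\dots,W_r)$ induces a partition $\pi_p\in\cP(r)$ by declaring $i\sim j$ whenever some pair of $p$ joins a position of $W_i$ to one of $W_j$.  The Weingarten weight $\Phi_N(p,p)=N^{-M/2}+O(N^{-M/2-2})$ from (\ref{eq1}) has leading M\"obius coefficient $\moeb(\mathrm{id})=1$ and is multiplicative on disjoint unions of pairings; simultaneously the trace expectation $\E(\Tr_{\pi(p,p)}(\cdots))$ factorizes at leading order by Proposition \ref{prop:ord}(\textit{i}), using, as in the proof of Theorem \ref{thm:distrib}, that any singleton of $\pi(p,p)$ yields $\E(\Tr(B_i))=0$ because each $B_i$ is centered.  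Hence
\[
\cE(p,W_1,\dots,W_r)=\prod_{V\in\pi_p}\cE(p|_V,W_V)+o(1).
\]
Setting $T_U:=\sum_{p_U\colon\pi_{p_U}=1_U}\cE(p_U,W_U)$ for $U\subseteq[r]$ and grouping pairings by $\pi_p$ gives
\[
\E\Bigl(\prod_{i\in V}\Tr(W_i)\Bigr)=\sum_{\pi'\in\cP(V)}\prod_{U\in\pi'}T_U+o(1).
\]
Substituting this expansion into the moment-cumulant formula, exchanging the order of summation and invoking the standard identity $\sum_{\pi\ge\pi'}\mu(\pi,1_r)=\delta_{\pi',1_r}$, one obtains
\[
k_r\bigl(\Tr(W_1),\dots,\Tr(W_r)\bigr)=T_{[r]}+o(1).
\]

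The main obstacle is then to establish $T_{[r]}=o(1)$ when $r\ge 3$.  Individually each $\cE(p)$ with $\pi_p=1_r$ is only $O(1)$, so the argument must produce genuine cancellation across the family of pairings whose induced graph on $[r]$ is connected.  Two natural routes are available: refining the $O(N^{-M/2-2})$ correction in the Weingarten expansion (\ref{eq1}) and exploiting the combinatorial rigidity that a connected $p$ with $\pi(p,p)$ still a pairing is over-determined as soon as $r\ge 3$, which should drop the leading contribution to $O(N^{-1})$; or invoking the orthogonal invariance of $\mathfrak U$ (Lemma \ref{orthoginvar}) together with Theorem \ref{thm:8}(\textit{ii}) to transport the vanishing of higher cumulants from the orthogonal ensemble.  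Either way, the combinatorial heart of the proof lies in showing that, among the pairings in $\boldsymbol{A}$, only those with $\pi_p<1_r$ contribute at order $1$.
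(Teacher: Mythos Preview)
Your setup through the identity $k_r(\Tr(W_1),\dots,\Tr(W_r))=T_{[r]}+o(1)$ is essentially the same decomposition the paper uses, and your multiplicativity claim for $\cE$ matches the paper's equation (\ref{eq:cc22}). The genuine gap is in your final paragraph: you treat $T_{[r]}=o(1)$ as something still to be earned, by cancellation or by subleading Weingarten corrections. That is not how the argument closes.

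The point you are missing is that $T_{[r]}$ is an \emph{empty sum} once $r\ge 3$. Recall that $T_{[r]}$ ranges over $p\in\boldsymbol{A}(W_1,\dots,W_r)$ with $\pi_p=1_r$. Membership in $\boldsymbol{A}$ already forces $\pi(p,p)$ to be a pairing of $[M]$, and Corollary~22 of \cite{mp} (quoted in the paper's proof) says that when $\pi(p,p)$ is a pairing, every block of $p\vee\gamma$ contains at most two cycles of $\gamma$; equivalently, your induced partition $\pi_p$ lies in $\cP_{1,2}(r)$. Hence no $p\in\boldsymbol{A}$ can have $\pi_p=1_r$ when $r\ge 3$, and $T_{[r]}=0$ on the nose. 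Your ``route~1'' gestures at this rigidity but mis-diagnoses it as an order drop rather than an outright exclusion; your ``route~2'' via Theorem~\ref{thm:8} would be circular, since Theorem~\ref{thm:8} already presupposes a second order limit $t$-distribution, which is precisely what this lemma is used to establish.

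So the fix is not analytical but combinatorial: insert the statement (and reference to \cite{mp}, Corollary~22) that $p\in\boldsymbol{A}$ implies $\pi_p\in\cP_{1,2}(r)$, after which $T_{[r]}=0$ for $r\ge 3$ and your argument is complete.
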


\begin{proof} 
Denote by $ \cP_{ 1, 2 } (r ) $ the set of all partitions on
$ [ r ] $ whose blocks have no more than 2 elements.  To
prove the lemma it suffices to show that we have
\begin{eqnarray}\lefteqn{\label{eq:011}
\lim_{ N \lra \infty }\E (
\Tr(W_1)\cdots \Tr(W_r )
)}\\
&& \mbox{}- 
\sum_{ \pi \in \cP_{ 1, 2 } ( r ) } \cc_\pi (\Tr(W_1 ),
\dots, \Tr( W_r  ) ) =0, \notag
\end{eqnarray}
as this will inductively force all cumulants higher than 2
to vanish.

By Remark \ref{remark:mathcalE} we have that
\begin{eqnarray*}\lefteqn{
\lim_{N \rightarrow \infty}
\E(\Tr(W_1, \dots, W_r ) ) }\\
&&\mbox{}-
\sum_{p \in \boldsymbol{A} ( W_1, \dots, W_r ) }
\Phi(p,p) \E(\Tr_{\pi(p,p)}(A_1, \dots, A_{m_r}))
= 0
\end{eqnarray*}
where $\boldsymbol{A} ( W_1, \dots, W_r ) =\{ p \in
\cP_2^{\boldsymbol{\eta}} ( m_r ) \mid \pi ( p, p )$ is a
pairing $\}$.

Thus it suffices to show that 
\begin{eqnarray*}\lefteqn{
\lim_{N \rightarrow \infty}
\sum_{ \pi \in \cP_{ 1, 2 } ( l ) } \cc_\pi (\Tr(W_1 ),
\dots, \Tr( W_r ) ) }\\
&& \mbox{}-
\sum_{p \in \boldsymbol{A} ( W_1, \dots, W_r ) }
\Phi(p,p) \E(\Tr_{\pi(p,p)}(A_1, \dots, A_{m_r})) =0.
\end{eqnarray*}

Let $\gamma$ be the permutation of $[ m_r ]$ with the $ r $
cycles
\[
(1, 2, \dots, m_1) (m_1 +1, \dots, m_2) \cdots (m_{ r-1}+1,
\dots, m_r ),
\]
and let $ \overline{\gamma} $ be the partition of $ [ \pm
  m_r ] $ with $ r $ blocks $ B_1, \dots, B_r $ with
\[B_k 
= \ab \{-m_k, \dots, -(m_{k-1}+1)\} \cup \{m_{k-1}+1, \dots,
m_k\}. 
\]
The blocks of the partition $\overline{\gamma}$ are the
union of a cycle of $\gamma$ with the image of the cycle
under the map $\delta$. For $ l,s \in [ r ]$, let $
\overline{\gamma}_{ l, s} = \overline{\gamma}_{ | B_l \cup
  B_s } $.

Let $ p \in \textbf{A} ( W_1, \dots, W_r )$ and suppose that
$ p\delta p \delta \vee \overline{\gamma} $ has the blocks $
C_1, \dots, C_t $.  Each $ C_j $ is a union of blocks of $
\overline{\gamma} $, hence $ p $ determines a partition $
\rho( p ) \in \cP(r ) $, with blocks $ D_1, \dots, D_t $
given by $ l \in D_j $ if and only if $ B_l \subset C_j $.

Let $ \mathcal{W}_j = \{ W_k ; B_k \subseteq C_j \} .$ We
will first show that
\begin{equation}\label{eq:cc22}
\cE(p, W_1, \dots, W_r ) = \cE(p_{ | C_1 },
\mathcal{W}_1)\cdots \cE( p_{| C_t }, \mathcal{W}_t) + o(1).
\end{equation}
To prove (\ref{eq:cc22}) it suffices to prove that the blocks
$ C_j $ are invariant under the permutation $\pi(p, p) $;
because we have the M\"obius function of equation
(\ref{eq1}) is multiplicative in the sense of \cite[Lecture
  1]{ns}. But, as described in Section \ref{s22}, the blocks
of $ \pi(p, p) $ are obtained by taking the absolute value
of the elements of the cycles of $ \delta \widetilde{p} $,
for $ \widetilde{ p } $ defined as in the proof of Theorem
\ref{thm:distrib}, i.e.  $ \widetilde{p} = \phi^{-1} p
\delta p \delta \phi $.  But, by construction, $ \phi( B_l)
= B_l $ (from the definition of $\phi$) and $ \delta ( B_l )
= B_l $ for all $ B_l $; and $ p( C_j ) = C_j $. Therefore $
\delta \widetilde{p } (C_j ) = C_j $ for all $ C_ j $. This
establishes equation (\ref{eq:cc22}).

Next, we shall  show that each $ C_j $ contains at most two
blocks of $ \overline{\gamma} $, which will show
that $\rho( p)\in P_{ 1, 2 } ( r ) $.  In Corollary 22 of
\cite{mp} we showed that if $\pi(p,p)$ (there denoted
$\pi_{p \mskip 2mu\cdot_\epsilon p}$) is a pairing then each
block of $p \vee \gamma$ contains at most two cycles of
$\gamma$. The connection between $p \vee \gamma$ and $
p\delta p \delta \vee \overline{\gamma} $ is quite
simple. With $p \vee \gamma$ we have a partition of $[M]$;
if we reflect this to a partition of $[-M]$ by applying
$\delta$ and then join each block of $p \vee \gamma$ to its
image under $\delta$ we then get $ p\delta p \delta \vee
\overline{\gamma} $. Thus each $C_j$ contains at most two
blocks of $\overline{\gamma}$.

Thus we have 
\[
\sum_{p\in \textbf{A}(W_1, \dots, W_r )} \cE(p, W_1, \dots,
W_r )
=
\sum_{\pi \in \cP_{1,2}(r)} 
\sum_{ 
\substack{ p\in \textbf{A}(W_1, \dots, W_r ) \\ 
\rho(p ) = \pi }
}
 \cE(p, W_1, \dots, W_r )
\]
and thereby have reduced the proof of  to
showing that
\begin{equation}\label{eq:the-final-step}
 \sum_{ 
\substack{ p\in \textbf{A}(W_1, \dots, W_r ) \\ 
\rho(p ) = \pi }
}
 \cE(p, W_1, \dots, W_r )
= \cc_{ \pi}( \Tr( W_1), \dots, \Tr(W_r ) ) + o (1 ). 
\end{equation}
Since $ \cE(p, W_1, \dots, W_r) $ factorizes, up to $o(1)$,
over the blocks of $ p\delta p \delta \vee \overline{\gamma}
$, it suffices to show that if $ ( j ) $ , and respectively
$ ( k, l ) $, are blocks of $ \rho( p ) $, then
\[
\cc_1( \Tr( W_j ) ) = \sum_{ p \in \textbf{A} ( W_j ) }
\cE(p, W_j ) + o(1) ,
\]
and respectively
\[
\cc_2(\Tr(W_l), \Tr(W_k ) ) = \sum _{ \substack { p \in
    \textbf{A} ( W_l, W_k ) \\ p\delta p \delta \vee
    \overline{\gamma}_{ l, k } = \textbf { 1 } } } \cE(p, W_l, W_k ) +
o(1).
\]
The first equality follows from the definition of $ \cE(p,
W_j ) $ and equation (\ref{eq:mathcalE2}).  For the second
equality,  according to equation
(\ref{eq:mathcalE2}), we have 
\begin{align*}
\cc_1(\Tr(W_l) ) \cc_1(\Tr(W_k) ) =& \sum_{p\in\textbf{A}(
  W_ l ) } \cE(p, W_l) \cdot \sum_{p\in\textbf{A}( W_ k ) }
\cE(p, W_k) + o (1 )\\ =& \sum_{ \substack{
    p\in\textbf{A}(W_l, W_k )\\ p\delta p \delta\vee
    \overline{\gamma}_{ l, k} = ( B_l, B_k ) } } \cE(p, W_l, W_k ) + o
( 1 ).
\end{align*}
The set $ \textbf{A} (W_l, W_k ) $ is the disjoint union of
the sets $ \{ p \in \textbf{A} (W_l, W_k ): p\delta p \delta
\vee \overline{\gamma}_{ l, k } = \textbf{1}_{B_l\cup B_k}
\} $ and $\{ p \in \textbf{A} ( W_l, W_k ) : p\delta p
\delta \vee \overline{\gamma}_{ l, k } = (B_l, B_k ) \} $,
therefore
\begin{align*}
\cc_2( \Tr( W_l ),& \Tr( W_k ) )
 = E ( \Tr(W_l ) \Tr(W_k ) ) - \cc_1(\Tr(W_l) ) \cc_1(\Tr(W_k)  ) \\
=&
 \sum_{ p \in \textbf{A}( W_l, W_k ) } \cE ( p, W_l, W_k  ) - 
\sum_{ \substack{ p\in\textbf{A}(W_l, W_k )\\
 p\delta p \delta \vee \overline{\gamma}_{ l, k} = ( B_l, B_k ) } } 
\cE( p, W_l, W_k ) + o ( 1 )\\
=&
\sum_{ \substack{ p\in\textbf{A}(W_l, W_k )\\
p\delta p \delta \vee \overline{\gamma}_{ l, k} =
\textbf{1}_{B_l\cup B_k} } } \cE(p, W_l, W_k ) + o( 1 ).
\end{align*}
This proves equation (\ref{eq:the-final-step}) and thus
completes the proof.

\end{proof}

\begin{theorem}\label{thm:u2a}
If $ \cA $ has a second order limit $ t$-distribution and is
independent from $\fU$, then $ \mathfrak{U} $ and $ \cA $
are asymptotically real second order free.

\end{theorem}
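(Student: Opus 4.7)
The plan is to verify the three conditions in Definition \ref{defin:03}(iii) making $\fU$ and $\cA$ asymptotically real second order free: first-order asymptotic freeness, a second order limit $t$-distribution of $\fU \cup \cA$, and the covariance identity (\ref{eq:02}). A second order limit $t$-distribution is stronger than the $t$-bounded cumulants property, so the hypothesis on $\cA$ lets me immediately apply Theorem \ref{thm:distrib} to get $t$-bounded cumulants for $\fU \cup \cA$; Corollary \ref{thm:bdd} then delivers equation (\ref{eq:02}).

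Next, to upgrade $t$-bounded cumulants to an honest second order limit $t$-distribution for $\fU \cup \cA$, I would use Lemma \ref{lemma:017} to reduce verification of conditions (i$'$)--(iii$'$) of Definition \ref{defn:1} to simplified words in $\cA \cup \cA^t \cup \fU$. Condition (iii$'$) is exactly Lemma \ref{lemma:4-preliminary_to_thm_22}. For (i$'$) and (ii$'$), I would use the leading-order expansion from Remark \ref{remark:mathcalE}: for a single simplified word
\[
\E(\Tr(W)) = \sum_{p \in \boldsymbol{A}(W)} \cE(p,W) + o(1),
\]
and similarly for two words with the restriction $p \delta p \delta \vee \overline{\gamma}_{1,2} = \mathbf{1}$ identified in the proof of Lemma \ref{lemma:4-preliminary_to_thm_22}. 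Because $\pi(p,p)$ is a pairing, $\Tr_{\pi(p,p)}(\cdots)$ equals $N^{M/2}$ times a product of normalized traces of pairs of elements from $\cA \cup \cA^t$; combined with $\Phi_N(p,p) = N^{-M/2}\moeb(\sigma_p) + O(N^{-M/2-2})$ from equation (\ref{eq1}), each $\cE(p, W_1, \dots, W_r)$ converges to $\moeb(\sigma_p)$ times a product of limiting first-order moments of $\cA \cup \cA^t$, with the non-factorization correction supplied by Proposition \ref{prop:ord} producing an extra contribution in (ii$'$) coming from the limiting second-order covariance of $\cA$.

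For first-order asymptotic freeness, the key observation is that the proof of Theorem \ref{thm:distrib} shows $\E(\Tr(W)) = O(1)$ for every simplified word containing at least one $\fU$-factor, so $\E(\tr(W)) = O(N^{-1}) \to 0$ for such $W$. Given a centered alternating product $D_1 \cdots D_n$ with $D_i$ alternately from polynomials in $\fU$-matrices and in $\cA$-matrices, expanding each $\fU$-polynomial as a $\CC$-linear combination of monomials in $U^{(\epsilon, \eta)}$ and using that the $\cA$-parts are already centered writes $D_1 \cdots D_n$ as a linear combination of simplified words each containing at least one $U^{(\epsilon, \eta)}$, so the bound applies termwise and yields $\lim_N \E(\tr(D_1 \cdots D_n)) = 0$.

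The main obstacle will be the bookkeeping in step two, specifically for (ii$'$): for each $p \in \boldsymbol{A}(W_1, W_2)$ contributing to the covariance one must determine which blocks of $\pi(p,p)$ couple factors across $W_1$ and $W_2$, and match each contribution to either a product of limiting first-order moments or a limiting second-order covariance of $\cA \cup \cA^t$. Once these limits are identified, the three conditions of Definition \ref{defin:03}(iii) are met and equation (\ref{eq:02}) automatically identifies the limiting covariance with the spoke-plus-reversed-spoke diagram sum.
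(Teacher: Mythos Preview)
Your overall architecture matches the paper's: invoke Corollary~\ref{thm:bdd} for equation~(\ref{eq:02}), reduce via Lemma~\ref{lemma:017} to simplified words, and then verify (\textit{i}$'$)--(\textit{iii}$'$). There is, however, one genuine gap and one place where the paper's route is noticeably shorter.

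\medskip
\textbf{The gap in (\textit{iii}$'$).} You write that ``Condition (\textit{iii}$'$) is exactly Lemma~\ref{lemma:4-preliminary_to_thm_22}.'' It is not. That lemma is stated only for simplified words \emph{each of which} contains at least one $U^{(\epsilon,\eta)}$ factor. After the reduction of Lemma~\ref{lemma:017}, the simplified words split as $W_1,\dots,W_r$ (each containing $U$-factors) and $W_{r+1},\dots,W_R$ (centered polynomials purely in $\cA\cup\cA^t$), and the cumulant $k_R(\Tr(W_1),\dots,\Tr(W_R))$ with $R\geq 3$ can mix both kinds. The paper handles this mixed case separately: Remark~\ref{remark:mathcalE} carries the pure-$\cA$ part along as the factor $\mfb$, equation~(\ref{eq:02}) gives $k_2(\Tr(W_i),\Tr(W_j))\to 0$ whenever $i\le r<j$, and then equations~(\ref{eq:012}) and~(\ref{eq:013}) combine with Lemma~\ref{lemma:4-preliminary_to_thm_22} (for the $i\le r$ block) and the hypothesis on $\cA$ (for the $i>r$ block) to force all higher cumulants to vanish. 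Without this step your argument does not cover the mixed case.

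\medskip
\textbf{A simpler route for (\textit{ii}$'$).} Your plan for (\textit{ii}$'$) is to analyse $\cE(p,W_1,W_2)$ directly, tracking which pairings couple the two words and pulling out limiting first- and second-order $\cA$-moments. This can be made to work, but the paper avoids the bookkeeping entirely: once equation~(\ref{eq:02}) holds and (\textit{i}$'$) is known, the covariance of two alternating words equals, up to $o(1)$, a finite sum of products of terms $\E(\tr(C_iD_j))$ and $\E(\tr(C_iD_j^t))$ with $C_i,D_j$ coming from $\fU$ or from $\cA\cup\cA^t$. Each such trace has a limit because $\fU$ has a limit distribution (Remark~\ref{remark:free}) and $\cA$ has a second order limit $t$-distribution. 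So existence of the limit in (\textit{ii}$'$) is immediate, with no need to open up the Weingarten expansion again.
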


\begin{proof}
According to Corollary \ref{thm:bdd}, the ensembles $ \cA $
and $ \mathfrak{U} $ satisfy equation (\ref{eq:02}), hence
we only need to prove that $ \cA \cup \mathfrak{U} $ has a
second order limit $ t $-distribution. That is, we must show
that any polynomials, $ W_1, \dots, W_R $, with complex
coefficients in elements from $ \cA \cup \cA^t \cup
\mathfrak{U} $ satisfy the conditions ($i^\prime$)--($
iii^\prime$) from Definition \ref{defn:1}(2).

From Lemma \ref{lemma:017}, each $ W_i $ has the property
\[
\Tr(W_i ) = c_{i, 0} + c_{i, 1} \Tr(W_{i, 1}) + \cdots + c_{
  i, m(i) } \Tr( W_{ i, m(i) } )
\]
for some $ c_{i, k }$, depending on $N$, and some simplified
words, $ W_{i, k}$, in $\cA \cup \cA^t$ and $\mathfrak{U}
$. Since $ \cA $ has a second order limit $t$-distribution,
it follows that the limit $ \lim_{ N \lra \infty } c_{ i, k}
$ exists and it is finite, so using the multilinearity of
the cumulant functions, so only need to show the result for
$ W_1, \dots, W_R $ as in expressions from
(\ref{eq:general-word}).

Since the hypotheses of Theorem \ref{thm:distrib} are weaker
than our present theorem, we then have that $\E(\Tr(W)) =
O(1)$; so Property (\textit{i}$'$) holds.

We have assumed that $ \cA $ has a second order limit $ t
$-distribution; by Remark \ref{remark:free} $\fU$ has a
limit distribution, so all the terms of the form
\[
\sum_{ k =1}^n [ \prod_{ i =1}^n \E(\tr( C_i \cdot
  D_{ k -i } )) + \prod_{ i =1}^n \E(\tr( C_i\cdot D_{ i - k }^t
  )) ] \}
\]
from equation (\ref{eq:02}) have a limit, hence property
(\textit{ii}$^\prime$) is satisfied.

To show that Property ($iii^\prime$) is also satisfied, let
us remember that for each $ r < j \leq R $, $ W_{j} $ is a
centered polynomial with complex coefficients in elements
from $ \cA \cup\cA^t $.  Since $ \cA $ and $ \mathfrak{U} $
satisfy equation (\ref{eq:02}), $ \displaystyle \lim_{ N
  \lra \infty } \cc_2 ( \Tr( W_i ), \Tr (W_j ) ) = 0 $
whenever $ i \leq r $ and $ r < j \leq R $, thus

\begin{eqnarray}\label{eq:012} \lefteqn{
\qquad\lim_{ N \lra \infty }\Big\{
\sum_{ \pi\in \cP_{ 1, 2 } ( R ) } 
\cc_\pi ( \Tr(W_1 ),  \dots, \Tr(W_R) ) }  \\  \notag
&-& \kern-1em
\sum_{ \pi \in P_{ 1, 2} ( r )  } 
\cc_\pi ( \Tr(W_1), \dots, \Tr(W_r ) )
\kern-1em
\sum_{ \sigma \in P_{ 1, 2} (R- r ) } \kern-1em
\cc_\sigma ( \Tr(W_{ r + 1 } ), \dots, \Tr (W_R ) )\Big\} \\ \notag
& = & 0. \\ \notag
\end{eqnarray}
Also $ \Tr(W_{ r + 1 }), \dots, \Tr(W_R ) $ have the
following properties: the first order cumulants are zero
(from the choice of $ W_1, \dots, W_R $ ); the second order
cumulants have a finite limit; and their cumulants of order
higher than 2 are $ o( 1 )$ (since the ensemble $ \cA $ has
a second order limit $ t $ -distribution). Thus we have that
\begin{eqnarray}\label{eq:013}\lefteqn{
\E \big( \Tr( W_{r + 1 }) \cdots \Tr ( W_R ) \big) }\\
& = &
\sum_{ \sigma \in \cP_{ 1, 2 } ( R - r ) } 
\cc_\sigma ( \Tr ( W_{ r + 1 } ), \dots, \Tr ( W_R ) ) +
o(1).\notag
\end{eqnarray}

\noindent 
Finally, equations (\ref{eq:mathcalE2}) and (\ref{eq:011})
give
\begin{align*}
\E(&\Tr(W_1)  \cdots \Tr(W_R))  \\
&
= \sum_{ \pi \in P_{ 1, 2 } (r ) } k_\pi ( \Tr(W_1) , \dots, \Tr(W_r ) ) \cdot
\E (\Tr(W_{ r + 1 } \cdots \Tr(W_R ) ) + o(1)
\end{align*}
so the conclusion follows from equations (\ref{eq:012}) and
(\ref{eq:013}).


\end{proof}

Since the ensemble generated by the identity matrices has
a second order limit $ t $-distribution, Theorem \ref{thm:u2a}
implies the following corollary.

\begin{corollary}\label{cor:dut}
The Haar distributed random unitary $ U $ has a second order
limit $ t $-distribu\-tion.
\end{corollary}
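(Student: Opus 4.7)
The plan is to apply Theorem \ref{thm:u2a} with $\cA$ taken to be the trivial ensemble $\cA = \{I_N\}_{N \geq 1}$ of identity matrices. First, I would verify that this $\cA$ has a second order limit $t$-distribution in the sense of Definition \ref{defn:1}(2). This is immediate: for any polynomial $p$ in a single variable, $W_N = p(I_N)$ is a scalar multiple of $I_N$, so $\tr(W_N)$ is a deterministic constant and all classical cumulants $\cc_r(\Tr(W_{N,1}),\dots,\Tr(W_{N,r}))$ with $r \geq 2$ are zero. Since $I_N^t = I_N$, the same is true for the ensemble $\cA \cup \cA^t$, which establishes the $t$-version.

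Next, $\cA$ is deterministic and therefore independent from $\fU$, so the hypotheses of Theorem \ref{thm:u2a} are satisfied. The conclusion of that theorem is that $\fU$ and $\cA$ are asymptotically real second order free, which by Definition \ref{defin:03}(\textit{iii}) requires in particular that $\fU \cup \cA$ has a second order limit $t$-distribution, i.e.\ that $(\fU \cup \cA) \cup (\fU \cup \cA)^t = \fU \cup \fU^t \cup \cA \cup \cA^t$ has a second order limit distribution. Since $\fU = \{U, U^t, U^*, \bl{U}\}$ is itself closed under the transpose operation, $\fU^t = \fU$, and of course $\cA^t = \cA$.

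Finally, to conclude that the single ensemble $\{U\}$ has a second order limit $t$-distribution, I need that $\{U\} \cup \{U\}^t = \{U, U^t\}$ has a second order limit distribution. But $\{U, U^t\} \subset \fU \subset \fU \cup \cA$, and the second order limit distribution of the smaller ensemble is inherited from the larger one by restricting the defining conditions (\textit{i}$'$), (\textit{ii}$'$), (\textit{iii}$'$) of Definition \ref{defn:1}(2) to polynomials that only involve $U$ and $U^t$. This gives the claim. There is no real obstacle here; the content has already been done in Theorem \ref{thm:u2a}, and the corollary is just the specialization to the trivial coefficient ensemble.
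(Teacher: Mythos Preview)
Your proposal is correct and follows exactly the paper's approach: the paper's entire proof is the one-line remark preceding the corollary, namely that the ensemble of identity matrices has a second order limit $t$-distribution, so Theorem~\ref{thm:u2a} applies. You have simply unpacked the implicit steps (verifying the hypotheses for $\cA=\{I_N\}$, noting $\fU^t=\fU$, and restricting from $\fU\cup\cA$ down to $\{U,U^t\}$), all of which are routine.
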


An immediate consequence of Theorem \ref{thm:8}, Theorem
\ref{thm:u2a} from above and Corollary 21 from \cite{mss} is
the following corollary.

\begin{corollary} 
Suppose that, $ \cA $ is an ensemble of random matrices with
a second order limit $ t $-distribution, that $ \mathcal{O}
$ is an ensemble of Haar distributed random orthogonal
matrices, that $ \mathcal{U}$ is an ensemble of Haar
distributed random unitary matrices, and furthermore suppose
that $ \cA$, $ \mathcal{O} $, and $ \mathcal{U} $ are
independent. Then $ \mathcal{O} $ and $ \mathcal{U }$ are
both real and complex second order free, $ \mathcal{U }$ and
$\cA $ are complex second order free, and $ \mathcal{O} $
and $\cA $ are real second order free.
\end{corollary}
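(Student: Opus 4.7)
The proof is essentially an assembly of three results that have already been proved or cited: Theorem~\ref{thm:8}(\textit{i}), Theorem~\ref{thm:u2a} (via its consequence Corollary~\ref{cor:dut}), and Corollary~21 of \cite{mss}. The plan is to verify each of the three pairwise freeness claims in turn.

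For $\mathcal{O}$ and $\cA$ real second order free, apply Theorem~\ref{thm:8}(\textit{i}) directly: by hypothesis $\cA$ has a second order limit $t$-distribution, $\mathcal{O}$ is a family of Haar orthogonal matrices, and the two ensembles are independent, so the theorem gives the desired conclusion. For $\mathcal{U}$ and $\cA$ complex second order free, invoke Corollary~21 of \cite{mss}: since $\mathcal{U}$ is unitarily invariant (by translation invariance of Haar measure on $\mathcal{U}(N)$) and $\cA$ has a second order limit distribution (a consequence of having a second order limit $t$-distribution), complex second order asymptotic freeness follows.

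For the pair $\mathcal{O}$ and $\mathcal{U}$ we need both types of second order freeness. The complex statement is immediate from Corollary~21 of \cite{mss}, using unitary invariance of $\mathcal{U}$. The real statement follows by regarding $\mathcal{U}$ in the role of $\cA$ in Theorem~\ref{thm:8}(\textit{i}); the hypothesis this requires is that $\mathcal{U}$ itself has a second order limit $t$-distribution, which is precisely Corollary~\ref{cor:dut}.

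The only conceptual point worth noting is that the combination ``both real and complex second order free'' is internally consistent. The real rule \eqref{eq:realsecondorderfreeness} differs from the complex rule \eqref{eq:secondorderfreeness} by the reversed spoke terms $\prod_i \phi(a_i b_{k+i}^t)$. In each such factor, $a_i$ and $b_{k+i}^t$ live in the two distinct (first-order free) subalgebras generated by $\mathcal{O}$ and $\mathcal{U}$, and both are centered; hence first-order asymptotic freeness already forces each $\phi(a_i b_{k+i}^t) \to 0$, so the extra terms vanish asymptotically. Consequently there is no tension in asserting both rules simultaneously, and no additional computation is needed beyond invoking the three cited results.
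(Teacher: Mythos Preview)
Your proof of the three freeness assertions is correct and matches the paper's approach: the paper simply cites Theorem~\ref{thm:8}, Theorem~\ref{thm:u2a} (whence Corollary~\ref{cor:dut}), and Corollary~21 of \cite{mss}, and you have assembled them exactly as intended.

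Your final consistency paragraph, however, contains a slip. It is not true that in every reversed-spoke factor $\phi(a_i b_{k+i}^t)$ the elements $a_i$ and $b_{k+i}^t$ lie in different subalgebras. Both sequences $(a_i)$ and $(b_j)$ are cyclically alternating, and in a reversed spoke diagram the indices advance in the same direction; so if one spoke connects same-subalgebra elements then they all do. In particular, when $a_i$ and $b_{k+i}$ are both polynomials in $O$, so is $b_{k+i}^t$ (since $O^t = O^{-1}$), and that factor need not vanish. The paper's explanation (given just after the corollary) instead uses Remark~\ref{remark:free}: in any same-subalgebra reversed spoke diagram there is necessarily at least one spoke with $a_i$ and $b_{k+i}$ in the $\mathcal{U}$-algebra, and then $b_{k+i}^t$ is a centered polynomial in $U^t$, so the asymptotic freeness of $\{U, U^*\}$ from $\{U^t, \overline{U}\}$ (Corollary~\ref{cor:ut}) kills that factor. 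Since this consistency remark is auxiliary and not part of the proof of the corollary itself, your argument for the stated result stands.
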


There is no contradiction in $ \mathcal{O} $ and $
\mathcal{U} $ being both asymptotically complex and real
second order free, since, from Remark \ref{remark:free}, all
traces from the second summation of equation (\ref{eq:02})
cancel asymptotically for Haar distributed random
unitaries. In the next section we will show that a similar
situation takes place for a larger class of random matrices.

\section{Unitarily-Invariant Random Matrices}

\begin{theorem}\label{thm:4-1}
Let $ \cA_1 $ and $ \cA_2 $ be two independent ensembles of
random matrices such that $ \cA_1 $ is unitarily invariant
and has the bounded cumulants property and $ \cA_2 $ has the
$ t$-bounded cumulants property. Then the ensemble $ \cA_1
\cup \cA_2 $ has the $ t $-bounded cumulants property.
\end{theorem}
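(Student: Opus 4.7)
The plan is to use the unitary invariance of $\cA_1$ to introduce an auxiliary Haar unitary $U$ and then adapt the Weingarten calculus already developed in the proof of Theorem~\ref{thm:distrib}. The first step is to observe that, by the unitary invariance of $\cA_1$ together with its independence from $\cA_2$, for any Haar distributed unitary $U$ independent of $\cA_1$ and $\cA_2$ one has
\[
(\cA_1,\cA_1^t,\cA_2,\cA_2^t) \overset{d}{=} (U\cA_1 U^*,\,\overline U\cA_1^t U^t,\,\cA_2,\cA_2^t).
\]
Hence any moment or joint cumulant of traces of polynomials in $\cA_1\cup\cA_1^t\cup\cA_2\cup\cA_2^t$ equals the corresponding quantity after this substitution, and the substituted polynomials become polynomials in $\mathfrak{U}\cup\cA_1\cup\cA_1^t\cup\cA_2\cup\cA_2^t$ in which each $\cA_1$-letter is flanked by the pair $(U,U^*)$ and each $\cA_1^t$-letter by $(\overline U,U^t)$.

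Next I would apply Lemma~\ref{lemma:017} to rewrite the traces of such polynomials as linear combinations of traces of simplified words in $\mathfrak{U}$ and centered polynomial-letters from $\cA_1\cup\cA_1^t\cup\cA_2\cup\cA_2^t$, and integrate $U$ out using Proposition~\ref{prop:u2}, exactly as in Theorem~\ref{thm:distrib}. This produces the expansion
\[
\E(\Tr(W_1)\cdots\Tr(W_R)) = \sum_{p,q\in\cP_2^{\boldsymbol{\eta}}(M)} \Phi_N(p,q)\,\E\bigl(\Tr_{\pi(p,q)}(B_1^{(\lambda_1)},\ldots,B_M^{(\lambda_M)})\,\mfb\bigr),
\]
in which the letters $B_j^{(\lambda_j)}$ lie in $\cA_1\cup\cA_1^t\cup\cA_2\cup\cA_2^t$.

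The principal obstacle is that $\cA_1\cup\cA_1^t$ is not assumed to have the bounded cumulants property, so Proposition~\ref{prop:ord}\,(\textit{ii}) does not directly yield the bound $\E(\Tr_{\pi(p,q)}(\ldots)\mfb)=O(N^t)$ in terms of the number $t$ of non-singleton cycles. The key new ingredient is a parity-preservation lemma tailored to the present substitution, in the spirit of Proposition~13 of \cite{mp}: because of the rigid pattern of $U,U^*,\overline U,U^t$ around the $\cA_1$- and $\cA_1^t$-letters, every cycle of $\pi(p,q)$ consists entirely of letters from $\cA_1\cup\cA_1^t$ or entirely from $\cA_2\cup\cA_2^t$; moreover, within an $\cA_1\cup\cA_1^t$-cycle the contractions coming from the $(\overline U,U^t)$-pairs effectively transpose every $\cA_1^t$-letter back into an $\cA_1$-letter, so that the cycle trace reduces to a trace of a product of $\cA_1$-matrices alone. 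A simple toy instance of this phenomenon is the identity $\E_U\Tr(UAU^*\overline U B^tU^t)=(N+1)^{-1}(\Tr(A)\Tr(B)+\Tr(AB))$, in which no transpose survives. Granted this parity-preservation, the remainder is routine: Proposition~\ref{prop:ord}\,(\textit{ii}) applies within each category, using the bounded cumulants property of $\cA_1$ for the $\cA_1$-cycles and the $t$-bounded cumulants property of $\cA_2$ for the $\cA_2\cup\cA_2^t$-cycles; combined with the usual power-counting estimate $\Phi_N(p,q)=O(N^{-M+\#(pq)/2})$ and the concluding step of the proof of Theorem~\ref{thm:distrib}, this gives $\E(\Tr(W_1)\cdots\Tr(W_R))=O(1)$, which via equation~(\ref{eq:cumulant-moment}) yields both properties (\textit{i}) and (\textit{ii}) of Definition~\ref{defn:1}(\textbf{1}). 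As an alternative organization, the same parity-preservation argument (applied with $\cA_2$ empty) first shows that $\cA_1$ itself already has the $t$-bounded cumulants property, after which one may invoke Theorem~\ref{thm:8}\,(\textit{ii}) directly with orthogonal conjugation---available since unitary invariance implies orthogonal invariance---and $\cB=\cA_2$ to conclude.
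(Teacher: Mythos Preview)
Your proposal is correct and follows the same overall strategy as the paper: use the unitary invariance of $\cA_1$ to introduce an auxiliary Haar unitary $U$, expand via the Weingarten formula of Proposition~\ref{prop:u2}, and observe that after integrating $U$ out the $\cA_1$-letters and $\cA_2$-letters separate and no transposes survive on the $\cA_1$-side. The alternative organization via Theorem~\ref{thm:8}\,(\textit{ii}) is also valid once the special case $\cA_2=\emptyset$ is handled.

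The organizational difference is worth noting. You route everything through the general $\pi(p,q)$-machinery of Theorem~\ref{thm:distrib} and then appeal to a parity-preservation lemma (stated but not proved) asserting that the cycles of $\pi(p,q)$ never mix $\cA_1$- and $\cA_2$-letters and that the transposes on the $\cA_1$-side cancel. The paper instead bypasses $\pi(p,q)$ entirely: it writes the entries of $(UA_kU^*)^{(\epsilon_k)}$ explicitly so that the $\cA_1$-entries $a^{(k)}_{\cdot,\cdot}$ depend only on the \emph{negative} indices $i_{-1},\ldots,i_{-2M}$ while the $\cA_2$-entries $b^{(k)}_{\cdot,\cdot}$ depend (via an explicit map $\psi$) only on the \emph{positive} indices $i_1,\ldots,i_{2M}$. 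Since $p,q\in\cP_2'(2M)$ pair odd with even, $p$ acts only on $[2M]$ and $\delta q\delta$ only on $[-2M]$, so the Weingarten constraint $\bi=\bi\circ p\delta q\delta$ factors as two independent constraints, and the inner sum factors as a product of an $\cA_1$-sum times an $\cA_2$-sum before any cycle analysis. The disappearance of transposes on the $\cA_1$-side is then just the observation (via a second explicit map $\omega$ and Remark~\ref{rem:2}) that $\omega^{-1}q\omega$ sends $[M]$ to $[-M]$. This direct factorization is what your parity-preservation lemma would ultimately unpack to; the paper's route is more concrete and avoids having to formulate and prove the lemma separately, at the cost of introducing the auxiliary maps $\psi,\omega$ by hand.
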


\begin{proof}
We shall start with an ad-hoc refinement of Lemma
\ref{lemma:017}.  More precisely, if $ W $ is a word in
polynomials with complex coefficients in elements from $
\cA_2 \cup \cA_2^t $ and $ {\cA_1 } \cup {\cA_1^t} $, then
 \begin{equation}\label{eq:wab}
 \Tr(W) = c_0 + c_1 \Tr(W_1) + \dots c_m \Tr(W_m )
 \end{equation}
for some $ c_0, c_1, \dots, c_m $ polynomials with complex
coefficients in expectations of normalized traces of
products of elements either from $ \cA_1 $ or from $ \cA_2
\cup \cA_2^t $, and $ W_1, \dots, W_m $ either centered
polynomials with complex coefficients in elements from
$\cA_2$, or of the form
  \[ 
   \, W_i = (UA_{1} U^*)^{ (\epsilon_1) } B_{1} \cdots
  (UA_{s}U^*)^{(\epsilon_s ) } B_{ s } 
  \]
where $ \epsilon_i \in \{ 1, -1 \} $, $ A_i$'s are centered
polynomials of elements from $ \cA_1 $ and $ B_i$'s are
either centered polynomials of elements from $ \cA_2 \cup
\cA_2^t$ or identity matrices, last situation possible only
if $\epsilon_i \not=\epsilon_{i+1}$, for $ i < s $,
respectively if $ \epsilon_1 \neq \epsilon _s $, for $ i = s
$.
 
Using the unitary invariance of $ \cA_1$, it suffices to
show (\ref{eq:wab}) for 
$ \displaystyle W = (UA_{1} U^*)^{
  (\epsilon_1) } B_{1} \cdots (UA_{n}U^*)^{(\epsilon_n ) }
B_{ n } $ 
with $ A_k $'s and $ B_k $'s arbitrary polynomials in
elements from $ \cA_1 \cup \cA_1^t $, respectively in
elements from $ \cA_2 \cup \cA_2^t $.  Similarly to Lemma
\ref{lemma:017}, we will use induction on $ (n, a, b) $,
where $ a $, respectively $ b $ are the numbers of $ A_k
$'s, respectively $ B_k $'s from the expansion of $ W $ not
satisfying the conditions from (\ref{eq:wab}).  For $ n =1 $
or $ a = b = 0 $, the result is trivial. For the induction
step, if $ a \neq 0 $, then, using traciality, we can
suppose that $ A_n $ is not centered. Then, using again the
notation $ \centre{A} = A - \E(\tr(A) ) I $,
   \begin{align*}
   W = (UA_{1} & U^*)^{ (\epsilon_1) }  B_{1} \cdots
     (U \centre{A_{n}}U^*)^{(\epsilon_n ) } B_{ n }\\
      &+ \E ( \tr(A_n )) (UA_{1} U^*)^{ (\epsilon_1) } B_{1} \cdots
           (U {A_{n-1}}U^*)^{(\epsilon_{n-1} ) } B_{ n-1 } B_{ n }
   \end{align*}
and the first term has fewer not centered $ A_k $'s, while
the second term has length strictly less than $ W $.  If $ b
\neq 0 $, then similarly we can suppose that $ B_n $ does
not satisfy the conditions from (\ref{eq:wab}). Then
  \begin{align*}
    \Tr( & W  )  = \Tr((U  A_{1} U^*)^{ (\epsilon_1) }  B_{1} \cdots
      (U {A_{n}}U^*)^{(\epsilon_n ) } \centre{B_{ n }} ) \\
       + &\E ( \tr(B_n ))
       \Tr( (U {A_{n}}U^*)^{(\epsilon_n ) }(UA_{1} U^*)^{ (\epsilon_1) } B_{1} \cdots
            (U {A_{n-1}}U^*)^{(\epsilon_{n-1} ) } B_{ n-1 })
    \end{align*}
and both the arguments of $ \Tr $ from the first term have
fewer $ B_k $'s not satisfying the conditions from
(\ref{eq:wab}) than in the expression of $ W $.
 
Since the ensembles $ \cA_1 $ and $ \cA_2 \cup \cA_2^t $
have second order bounded distributions, each $ c_k $ is
bounded in $ N $.  Therefore it suffices to show that $ W_1,
W_2. \dots, W_R $ satisfy the condition from
Definition\ref{defn:1}(1), where $ W_i $'s are defined as
follows. For $ r \leq R $ and $ 1 \leq t \leq R - r $,
\[
 W_{r + t } = B_{ M + t } 
\]
with $ B_{M + t } $ a centered polynomial in elements from $
\cA_2 $.  For $ 1 \leq s \leq r $,
\[
 W_s = ( U A_{ M_{ s - 1 } + 1 } U^\ast ) ^{ \epsilon ({ M_{ s - 1 } + 1
   } ) } B_{ M_{ s - 1 } + 1 }  \cdots 
( U A_{ M_s }U^\ast ) ^{ \epsilon( M_s ) } B_{ M_s }
\]
where given positive integers $ l_1, l_2, \dots,
l_r $, we set $ M_0 = 0 $, $ M_s = M_{ s - 1 } + l_s $, $
M_r = M $, and  $ \varepsilon_i \in \{ 1, -1 \} $, each $ A_i $ are
centered polynomials of elements from $ \cA_1$ and each $
B_i$ is either a centered polynomial of elements from $
\cA_2 $ or an identity matrix, but $ B _i $
can be an identity matrix only if $ \epsilon_i = -
\epsilon_{i + 1}$ , if $ i < M_s $, or if $ \epsilon_{M_{
s-1} + 1 } =- \epsilon_{M_s}$, if $ i = M_s $. 

To show that $ {\cA_1 } \cup \cA_2 $ has the $ t $-bounded
cumulants property, as in the proof of Theorem
\ref{thm:distrib} it suffices to show that
\begin{equation}\label{eq:028}
 \E ( \Tr(W_1 ) \cdots \Tr(W_R ) ) = O(1).
\end{equation}

Note that, for $ A = ( a_{ i, j } )_{i, j =1}^N \in \cA_1$,
the $ (i_1, i_2) $-entries of $ UAU^\ast $ and 
$ ( U A U^\ast )^t $ are, respectively, 
\begin{align*}
\left[ U A U^\ast \right]_{i_1, i_2 } 
& = \sum_{ i_{ -1}, i_{ -2 } =1}^N 
    u_{ i_ 1 , i_{ -1 } } a_{ i_{ - 1 }, i_{ -2 }} 
    \overline{ u_{ i_{ 2 } , i_{ -2 } } } \\ 
\left[ (U A U^\ast )^t \right]_{i_1, i_2 } 
& = \sum_{ i_{ -1}, i_{ -2 } =1}^N 
    u_{ i_2, i_{ -1 } } a_{ i_{ - 1 }, i_{ - 2 }} 
   \overline{ u_{ i_1, i_{ - 2 } } }
= \left[ U A U^\ast  \right]_{ i_2, i_1},
\end{align*}
\noindent henceforth
\begin{align*}
\Tr(W_s) =&
 \Tr( (U A_{ M_{ s - 1 } + 1 } U^\ast )^{ 
       \epsilon({ M_{ s - 1 } + 1 }) } 
B_{ M_{ s - 1 } + 1 }
\cdots (U A_{M_s} U^\ast )^{ \epsilon ( M_s )} B_{ M_s } )\\
&\kern-4em=
\sum_{ \bi ( s ) }
( 
\prod_{ k = M_{ s - 1 } }^{ M_s }
u_{ i_{ 2 k + 1 }, i_{ - ( 2 k + 1 ) } }
\cdot
a_{ i_{ - ( 2 k + 1 ) } , i_{ - ( 2 k + 2 ) } }^{ ( k + 1 ) }
\cdot
\overline{ u_{ i_{ 2 k + 2 }, i_{ - ( 2 k + 2 ) } } }
\cdot
b_{ i_{ \psi(k) }, i_{ \psi ( - k ) } }^{ ( k ) }
)
\end{align*}
where 
$ \bi(s) = ( i_{ 2 M_{ s - 1} + 1}, i_{ - ( 2 M_{ s - 1} +  1) }, 
\dots i_{ 2 M_s},  i_{ -2M_s } ) $ 
and the map $ \psi $ is given by
\begin{enumerate}
\item[\raise0.12em\hbox{$\centerdot$}] if
 $ M_{ s -1 } <  k \leq M_s $, 
 then 
$\displaystyle \psi( k ) = \left\{ 
\begin{array}{ l l } 2 k -1 ,  & \text{ if } \epsilon ( k ) = 1 \\
 2k, & \text{ if } \epsilon ( k ) = - 1  
\end{array}
 \right. $
\item[\raise0.12em\hbox{$\centerdot$}] if
$ M_{ s -1 } <  k  < M_s $, 
then   
$\displaystyle \psi( - k  ) = \left\{ \kern-0.5em
\begin{array}{ l l } 2 k +1 ,  & \text{ if } \epsilon ( k+ 1 ) =  1  \\
 2k+2, & \text{ if } \epsilon ( k+ 1 ) =  - 1  
\end{array}
 \right. $
\item[\raise0.12em\hbox{$\centerdot$}]  
$\displaystyle \psi( -  M_s  ) = \left\{ 
\begin{array}{ l l } 2 M_{ s - 1 } + 1 ,  & \text{ if } \epsilon ( 1 ) =   1  \\
 2 M_{ s - 1 } + 2, & \text{ if } \epsilon ( 1  ) =  - 1  .
\end{array}
 \right. $
\end{enumerate}

Let $ \cP_2^{\prime} ( 2 M ) = \{ p\in P_2( 2 M ) :  l+ p(l)
= 1  (\text{mod}\ 2) \text{ for all }\ l \in [ 2M] \}  $.
Expanding as above and applying Proposition \ref{prop:u2}, we obtain 
\begin{align*}
\E( \Tr&( W_1 ) \Tr( W_2 ) \cdots \Tr( W_r ) )\\
=&
\sum_{ \bi }
\E(
\prod_{ k = 1 }^{ M}
u_{ i_{ 2 k + 1 }, i_{ - ( 2 k + 1 ) } }
\cdot
a_{ i_{ - ( 2 k + 1 ) } , i_{ - ( 2 k + 2 ) } }^{ ( k + 1 ) }
\cdot
\overline{ u_{ i_{ 2 k + 2 }, i_{ - ( 2 k + 2 ) } } }
\cdot
b_{ i_{ \psi(k) }, i_{ \psi ( - k ) } }^{ ( k ) }
)
\\
=&
\sum_{ \bi } 
[ 
\E( 
u_{ i_1, i_{ - 1 } } \overline{ u_{ i_{ 2 }, i_{ - 2 } } } \cdots
 u_{ i_{ 2 M -1}, i_{ -2 M + 1 } }
\overline{ u_{ i_{ 2M  }, i_{ - 2 M } } } 
   )
\\
&\hspace{1cm} 
 \cdot \E(
a^{ ( 1 ) }_{ i_{ -1 }, i_{ -2 } } \cdots a^{ ( M) }_{ i_{ -2 M+ 1 }, i_{ -2M } }
   )
\cdot \E(
b^{ ( 1 ) }_{ i_{ \psi( 1 ) }, i_{ \psi ( - 1 ) } } \cdots b^{ ( M  ) }_{ i_{ \psi(  M ) }, i_{ \psi ( - M ) } }
)
     ]
\\
=& 
\sum_{ p,q \in \cP_2^{\prime} ( 2 M ) } \Phi_N (p, q ) \cdot 
\sum_{\bi = \bi \circ p \delta q \delta }
[
 \E ( a^{ ( 1 ) }_{ i_{ -1 }, i_{ -2 } } \cdots a^{ ( M) }_{ i_{ -2 M+ 1 }, i_{ -2M } } ) \\
&\hspace{6cm}
\cdot
\E (  b^{ ( 1 ) }_{ i_{ \psi( 1 ) }, i_{ \psi ( - 1 ) } } \cdots b^{ ( M  ) }_{ i_{ \psi(  M ) }, i_{ \psi ( - M ) } }  ) 
] 
\end{align*}

Since $ p, q \in P_2^\prime ( 2 M ) $, we have that $ p$
acts only on $ [ 2 M ] $, while $ \delta q \delta $ acts
only on $ [ -2 M ] $, hence the condition $ \bi = \bi \circ
p \delta q \delta $ is equivalent to $ \bi = \bi\circ p $
and $ \bi = \bi \circ \delta q \delta $.

Moreover, denoting $ l_k = i_{\phi (k ) } $, the condition $
\bi = \bi \circ p $ is equivalent to $\boldsymbol{l}
=\boldsymbol{l} \circ \phi^{-1} p \phi $.

Consider the map $ \omega : [ \pm M ] \lra [ 2 M ] $, given by
\[
\omega ( k ) = \left\{ 
\begin{array}{l l}
2k, & \text{ if } k < 0\\
-2 k + 1, & \text{ if } k > 0
\end{array}
\right.
\]
Denoting now $ j_k = i_{- \omega ( k ) } $, the condition $
\bi = \bi \circ \delta q \delta $ is equivalent to $ \bj =
\bj \circ \omega^{ -1 } q \omega $, hence we obtain
\begin{align*}
\sum_{\bi = \bi \circ p \delta q \delta } &
[
 \E ( a^{ ( 1 ) }_{ i_{ -1 }, i_{ -2 } } \cdots a^{ ( M) }_{ i_{ -2 M+ 1 }, i_{ -2M } } ) 
\cdot
\E (  b^{ ( 1 ) }_{ i_{ \psi( 1 ) }, i_{ \psi ( - 1 ) } } \cdots b^{ ( M  ) }_{ i_{ \psi(  M ) }, i_{ \psi ( - M ) } }  ) 
] 
\\
=&
\E (
\sum_{ \bj = \bj \circ \omega^{ -1 } q \omega } 
a_{ j_1, j_{ -1} }^{ ( 1 ) }  \cdots a_{ j_{ M }, j_{ -M } }^{ ( M ) }
 )
\cdot 
\E(
\sum_{  \boldsymbol{l} =\boldsymbol{l} \circ \psi^{-1} p \psi }
 b_{ l_1, l_{ - 1 } }^{ ( 1 ) }\cdots b_{ l_M, l_{ - M } }^{ ( M ) } 
     )
\end{align*}
From Lemma \ref{lemma:pi_epsilon}, there exist some
$\sigma_1 \in S_M $ and some $ \boldsymbol{\eta}\in \{ -1,
1\}^M $ such that
\[
\sum_{  \boldsymbol{l} =\boldsymbol{l} \circ \psi^{-1} p \psi }
 b_{ l_1, l_{ - 1 } }^{ ( 1 ) }\cdots b_{ l_M, l_{ - M } }^{ ( M ) } 
=\Tr_{\sigma_1 }( B_1^{(\eta_1 ) }\cdots B_M^{ ( \eta_M ) } ).
\]
From the definition of $ \psi $ and since $ p \in P_2^\prime
( M ) $, we have that $ l_k = l_{ - k } $ implies $
\epsilon_k = \epsilon_{ - k} $, so if $ ( s ) $ is a
singleton of $ \sigma_1 $, then $ B_s \neq I $.  Henceforth,
Proposition \ref{prop:ord}($ i $) gives that
\[
\E(
\sum_{  \boldsymbol{l} =\boldsymbol{l} \circ \psi^{-1} p \psi }
 b_{ l_1, l_{ - 1 } }^{ ( 1 ) }\cdots b_{ l_M, l_{ - M } }^{ ( M ) } 
     )= 
O(N^{  - \frac{M}{2}})
\]

Next, observe that $ \omega^{ -1 } q \omega ( [ M ] ) = [ -M
] $.  Indeed, if $ k > 0 $, then we have $ \omega( k ) = 1
$(mod 2), hence $ q \omega ( k ) =0$(mod 2) and $ \omega^{ -
  1 } q \omega( k ) < 0 $.  Therefore, according to Remark
\ref{rem:2}, there exists some $ \sigma_2 \in S_M $ such
that
\[
\sum_{\bj = \bj \circ \omega^{ -1 } q \omega } 
a_{ j_1, j_{ -1} }^{ ( 1 ) } \cdots a_{ j_{ M }, j_{ -M } }^{ ( M ) }
=\Tr_{\sigma_2 } ( A_1 \cdots A_M ).
\] 
and applying Proposition \ref{prop:ord}($ i $) we obtain
\[
\E (
\sum_{ \bj = \bj \circ \omega^{ -1 } q \omega } 
a_{ j_1, j_{ -1} }^{ ( 1 ) }  \cdots a_{ j_{ M }, j_{ -M } }^{ ( M ) }
 )
=
O(N^{ -  \frac{M}{2}}).
\]

Since $ \Phi_N ( p, q ) = O( N ^M ) $, Proposition
\ref{prop:ord}($ii$) gives equation (\ref{eq:028}), hence
the conclusion.

\end{proof}
Taking $ \cA_2 $ to consist only in scalar matrices, Theorem
\ref{thm:4-1} has the following immediate consequence.
\begin{corollary}
Let $ \cA $ be a unitarily invariant ensemble of random
matrices.  If $ \cA $ has the bounded cumulants property,
then it has the $ t $-bounded cumulants property.
\end{corollary}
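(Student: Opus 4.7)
The plan is to simply instantiate Theorem~\ref{thm:4-1} with a trivial choice of second ensemble and verify that the hypotheses and conclusion transfer to $\cA$ itself. Specifically, I would take $\cA_2 = \{cI : c \in \bC\}$, or more concretely, the constant ensemble $\{I\}_{N \geq 1}$ where $I$ denotes the $N \times N$ identity matrix, and take $\cA_1 = \cA$. Independence is automatic since $\cA_2$ is deterministic, and unitary invariance of $\cA_1 = \cA$ is given by hypothesis.

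Next I would verify the two remaining hypotheses of Theorem~\ref{thm:4-1}. First, $\cA_1 = \cA$ has the bounded cumulants property by assumption. Second, I need that $\cA_2$ has the $t$-bounded cumulants property, i.e.\ that $\cA_2 \cup \cA_2^t$ has the bounded cumulants property. But $I^t = I$, so $\cA_2^t = \cA_2$; any polynomial $W_{N,i}$ in elements from $\cA_2$ is a scalar multiple of $I$, whence $\E(\tr(W_{N,i}))$ is a deterministic complex number bounded in $N$, and $\Tr(W_{N,i})$ is a deterministic multiple of $N$, so all classical cumulants $\cc_r(\Tr(W_{N,1}), \dots, \Tr(W_{N,r}))$ with $r \geq 2$ vanish identically. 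Thus both conditions \emph{(i)} and \emph{(ii)} of Definition~\ref{defn:1}(\textbf{1}) hold trivially.

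Applying Theorem~\ref{thm:4-1}, I conclude that $\cA_1 \cup \cA_2 = \cA \cup \{I\}$ has the $t$-bounded cumulants property. Finally, I would observe that every polynomial with complex coefficients in the elements of $\cA \cup \cA^t$ is, a fortiori, a polynomial with complex coefficients in the elements of $(\cA \cup \{I\}) \cup (\cA \cup \{I\})^t = \cA \cup \cA^t \cup \{I\}$ (adjoining $I$ only enlarges the polynomial algebra). Hence the uniform bounds on expected normalized traces and on higher classical cumulants of traces guaranteed by the $t$-bounded cumulants property of $\cA \cup \{I\}$ specialize to yield the $t$-bounded cumulants property of $\cA$. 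There is no genuine obstacle here: the entire content lies in Theorem~\ref{thm:4-1}, and the corollary is a one-line specialization.
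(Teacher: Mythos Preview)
Your proposal is correct and matches the paper's own approach: the paper simply states that taking $\cA_2$ to consist of scalar matrices makes the corollary an immediate consequence of Theorem~\ref{thm:4-1}. You have supplied the routine verifications the paper leaves implicit, but the argument is the same one-line specialization.
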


Another relevant consequence of Theorem \ref{thm:4-1} is the
result below.

\begin{corollary}\label{cor:t-free}
If $ \cA $ is a ensemble of unitarily invariant random
matrices with a limit distribution and the bounded cumulants
property, then $ \cA $ and $ \cA^t = \{ A^t :\ A \in \cA \}
$ are asymptotically free. In particular, the ensemble $ \cA
\cup \cA^t $ has a limit distribution.

\end{corollary}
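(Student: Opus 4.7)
The plan begins by invoking the immediately preceding corollary of Theorem \ref{thm:4-1} (taking $\cA_2$ to consist of scalar matrices) to deduce that $\cA$ has the $t$-bounded cumulants property. In particular, $\cA \cup \cA^t$ has bounded cumulants, and the ``in particular'' clause---that $\cA \cup \cA^t$ has a limit distribution---will follow from the asymptotic freeness statement, since every mixed moment reduces, by centering and alternation, to moments of $\cA$ alone, whose limits exist by hypothesis.

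To prove that $\cA$ and $\cA^t$ are asymptotically free, I fix centered polynomials $D_1, D_3, \ldots$ in elements of $\cA$ and $D_2, D_4, \ldots$ in elements of $\cA^t$, alternating, and must show that $\lim_N \E \tr(D_1 \cdots D_n) = 0$. The essential trick is to enlarge the probability space by an independent $N \times N$ Haar unitary $U$ and exploit the unitary invariance of $\cA$ to obtain the distributional identity
$$(\cA, \cA^t) \stackrel{d}{=} (U\cA U^*, \overline{U}\cA^t U^t).$$
Writing each $D_{2k}$ as $Q_{2k}^t$ for a polynomial $Q_{2k}$ in $\cA$ (automatically centered, since $\tr(M) = \tr(M^t)$), this becomes
$$\E \tr(D_1 \cdots D_n) = \E \tr\bigl[(UD_1U^*)(\overline{U}Q_2U^t)(UD_3U^*)(\overline{U}Q_4U^t)\cdots\bigr].$$
Integration over $U$ via Proposition \ref{prop:u2} produces a sum over pairings $p, q \in \cP_2^{\boldsymbol\eta}(2n)$, with sign pattern $\boldsymbol\eta = (1, -1, -1, 1, 1, -1, -1, 1, \ldots)$ alternating in blocks of two (forced by the factor sequence $U, U^*, \overline{U}, U^t, U, U^*, \ldots$), of terms $\Phi_N(p,q) \cdot \E \Tr_{\pi(p,q)}(D_1^{(\lambda_1)}, Q_2^{(\lambda_2)}, D_3^{(\lambda_3)}, \ldots)$.

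The concluding analysis parallels the proof of Theorem \ref{thm:distrib}. Using $\Phi_N(p,q) = O(N^{-2n + \#(pq)/2})$ together with Proposition \ref{prop:ord}, the leading-order contributions require $\pi(p,q)$ to be a pairing; but the constraint $\boldsymbol\eta \circ p = -\boldsymbol\eta$ combined with the $\boldsymbol\eta$-pattern above forces any such $p$ to produce a singleton in $\pi(p,q)$ at an index where the corresponding $D_i^{(\lambda_i)}$ or $Q_i^{(\lambda_i)}$ is centered, so its expected trace vanishes and the whole term is $o(N)$. Dividing by $N$ yields $\E \tr(D_1 \cdots D_n) = o(1)$. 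The main obstacle is this final combinatorial step---verifying that the alternation between $\cA$-type and $\cA^t$-type factors (encoded in $\boldsymbol\eta$) indeed forces the required singleton at a centered matrix. This is directly analogous to the sign-matching argument $(\epsilon_i, \eta_i) = -(\epsilon_{\gamma(i)}, \eta_{\gamma(i)})$ appearing in the proof of Theorem \ref{thm:distrib} just before equation~\eqref{eq:end_of_thm_16}, and uses the explicit description of $\pi(p,q)$ from Section~\ref{s22}.
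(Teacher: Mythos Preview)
Your approach is correct and follows essentially the same path as the paper: use unitary invariance to introduce an independent Haar unitary $U$, rewrite the alternating word as a word in $\mathfrak{U}$ and centered polynomials from $\cA$, and deduce that its expected trace is $o(1)$. The paper's own proof is a one-line citation of equation~(\ref{eq:028}) from Theorem~\ref{thm:4-1} (specializing $\cA_2$ to scalar matrices, so every $B_i$ is the identity and the $\epsilon_i$'s alternate), which gives $\E(\Tr(A_1 A_2^t A_3 \cdots)) = O(1)$ directly, hence $\E(\tr(\cdots))\to 0$.

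Your route through the framework of Theorem~\ref{thm:distrib} arrives at the same place: once you observe that $(UD_1U^*)(\overline{U}Q_2 U^t)(UD_3U^*)\cdots$ is a \emph{simplified word} in the sense of Definition~\ref{defn:016} (it is, since between $U^*$ and $\overline{U}$, and between $U^t$ and $U$, the pairs $(\epsilon,\eta)$ are not negatives of each other, so the intervening identity matrices are allowed), equation~(\ref{eq:moment-bound}) already delivers $\E(\Tr(W)) = O(1)$. Your final paragraph tries to re-derive this bound but the phrasing is muddled: if $\pi(p,q)$ is a pairing it has \emph{no} singletons, so ``forces a singleton'' is not the right mechanism. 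What the proof of Theorem~\ref{thm:distrib} actually establishes is that \emph{whenever} $\pi(p,q)$ has a singleton, that singleton lands on a centered matrix; this caps the order of $\E(\Tr_{\pi(p,q)}(\cdots))$ at $N^{M/2}$, and combined with $\Phi_N(p,q)=O(N^{-M/2})$ one gets $O(1)$ term by term. You do not need to rule out pairings---$O(1)$ for $\Tr$ is already $o(1)$ for $\tr$, which is all that asymptotic freeness requires.
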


\begin{proof}
Taking the ensemble $ \cA_2 $ from Theorem \ref{thm:4-1} to
consist only on scalar matrices, equation (\ref{eq:028})
implies that for all $ A_1, \dots, A_m $ centered
polynomials in elements from $ \cA$,
\[ \lim_{ N \lra \infty } \E( \tr( A_1 A_2^t A_3 \cdots ) ) =  0 \]
hence the conclusion.
\end{proof}

Note that if $ \cB \subset \cA $ is an ensemble of symmetric
random matrices, then $ \cB $ is also contained in $ \cA^t
$.  This situation is discussed in Corollary \ref{cor:symm}
below. Note that in a non-commutative probability space
$(\cC, \phi)$ an element $a$ with the property that
$\phi(a^n) = 0$ for all $n \geq 1$ is free from itself. For
example a Haar unitary has this this property, but of course
it is not $*$-free from itself.

\begin{lemma}\label{lemma:symm}
Suppose that $\{ B_N\}_N $ is an ensemble of random matrices
with a limit distribution and $ \displaystyle \beta = \lim_{
  N \longrightarrow\infty } E ( \tr ( B_N) ) $.
\begin{enumerate}

\item[(\textit{i})]If $ \{ B_N \}_n $ and $ \{ B^\ast_N \}_n
  $ are asymptotically free, then $ \{ B_N - \beta I_N \}_N
  $ converges to $0$ in $ L^2 ( E \circ \tr ) $.

\item[(\textit{ii})]If $ \{ B_N \}_N $ is asymptotically
  free from itself, then $ \{ B_N - \beta I_N \}_N $
  converges to $0$ in distribution with respect to $ E \circ
  \tr $, but not necessarily in $ L^2 ( E \circ \tr ) $.

\end{enumerate}
\end{lemma}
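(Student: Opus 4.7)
For part (\textit{i}), the plan is to apply the asymptotic freeness hypothesis directly. Write $\widetilde B_N = B_N - \beta I_N$; $L^2$ convergence to $0$ amounts to showing $\E(\tr(\widetilde B_N^* \widetilde B_N)) \to 0$. Since $\E(\tr(B_N)) \to \beta$, both $\widetilde B_N$ and $\widetilde B_N^*$ are asymptotically centered polynomials in a single element of $\{B_N\}$, respectively $\{B_N^*\}$. After replacing $\widetilde B_N$ by the exactly centered shift $C_N := B_N - \E(\tr(B_N)) I_N$ in order to satisfy the hypothesis $\E(\Tr(D_l)) = 0$ of Definition \ref{defin:03}(\textit{i}) literally, the asymptotic freeness of $\{B_N\}$ and $\{B_N^*\}$ applied to the alternating pair $(C_N^*, C_N)$ gives $\lim_N \E(\tr(C_N^* C_N)) = 0$. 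A direct expansion shows that $\E(\tr(\widetilde B_N^* \widetilde B_N))$ differs from $\E(\tr(C_N^* C_N))$ by a term that is $O(|\E(\tr(B_N)) - \beta|^2) \to 0$, which completes part (\textit{i}).

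For part (\textit{ii}), convergence to $0$ in distribution means $\lim_N \E(\tr(\widetilde B_N^n)) = 0$ for each $n \geq 1$. The case $n = 1$ is the definition of $\beta$. For $n \geq 2$, the plan is to view the product $C_N^n$ as an alternating product of $n$ copies of $C_N$, where the factors are regarded as coming alternately from two formal copies of the ensemble $\{B_N\}$. The hypothesis that $\{B_N\}$ is asymptotically free from itself means precisely that these two copies satisfy Definition \ref{defin:03}(\textit{i}), so $\E(\tr(C_N^n)) \to 0$. Expanding $\widetilde B_N^n$ in terms of $C_N$ by the binomial theorem and using $\E(\tr(B_N)) - \beta \to 0$ transfers the conclusion to $\widetilde B_N$.

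For the claim that $L^2$ convergence is not automatic in (\textit{ii}), the counterexample to produce is the Haar distributed unitary. By Remark \ref{remark:free} we have $\lim_N \E(\tr(U_N^m)) = 0$ for every nonzero integer $m$, so $U_N$ is asymptotically free from itself with $\beta = 0$, giving $\widetilde U_N = U_N$; yet $U_N^* U_N = I_N$, so $\E(\tr(\widetilde U_N^* \widetilde U_N)) = 1$ for all $N$ and $L^2$ convergence fails. The only real subtlety throughout the argument is the exact-versus-asymptotic centering bookkeeping; structurally, parts (\textit{i}) and (\textit{ii}) differ because freeness with respect to $B_N^*$ controls the $\ast$-moment $\tr(\widetilde B_N^* \widetilde B_N)$, whereas self-freeness controls only products built purely from $\widetilde B_N$.
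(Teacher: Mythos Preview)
Your arguments for parts (\textit{i}) and (\textit{ii}) follow the same route as the paper's, with the welcome addition of explicit exact-versus-asymptotic centering bookkeeping; the paper applies the freeness relation directly to $B_N - \beta I_N$ as though it were centered, which works in the limit but is less transparent than your passage through $C_N = B_N - \E(\tr(B_N))I_N$.

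The counterexamples differ. You take the Haar unitary $U_N$ itself, which is immediately self-free (all nonconstant moments vanish) yet has $\E(\tr(U_N^* U_N)) = 1$; this is valid and simpler. The paper instead uses $B_N = U_N + U_N^t$: self-freeness comes from Corollary~\ref{cor:t-free} (a unitarily invariant ensemble is asymptotically free from its own transpose, and here $B_N^t = B_N$), and $\E(\tr(B_N B_N^*)) \to 2$ via Corollary~\ref{cor:ut}. The more elaborate choice is deliberate: this example is symmetric and unitarily invariant, and the paper reuses it verbatim in the proof of Corollary~\ref{cor:symm}(\textit{ii}), which specifically requires a counterexample of that shape. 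Your $U_N$ settles the lemma but would not serve in the corollary, since $U_N \neq U_N^t$.
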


\begin{proof}

Suppose that $ \{ B_N \}_N $ and $ \{ B^\ast_N \}_N $ are
asymptotically free. The definition implies that
\begin{align*}
\lim_{ N \longrightarrow \infty } E ( \tr ( &( B_N - \beta
I_N ) ( B_N - \beta I_N )^\ast ) ) ) \\ =& \lim_{ N
  \longrightarrow \infty } E ( \tr ( ( B_N - \beta I_N ))
\cdot \lim_{ N \longrightarrow \infty } E ( \tr ( ( B_N -
\beta I_N )^\ast ) =0.
\end{align*}

Suppose that $ \{ B_N \}_N $ is asymptotically free from
itself.  Then, for any positive integer $ m $, we have that
\begin{align*}
\lim_{ N \longrightarrow \infty } &(E \circ \tr ) ( ( B_N - \beta I_ N )^{  m + 1 } )\\
 = &
\lim_{ N \longrightarrow \infty } (E \circ \tr ) ( ( B_N - \beta I_ N )^{  m } ) \cdot 
\lim_{ N \longrightarrow \infty } (E \circ \tr ) ( ( B_N - \beta I_ N ) ) =0
\end{align*}

For the last part, take $ \{ B_N \}_N = \{ U + U^t \}_N $.
Then $ \{ B_N \}_N $ is an ensemble of unitarily invariant
random matrices and, according to Corollary
\ref{cor:t-free}, it is asymptotically free from $ \{ B_N^t
\}_N = \{ B_N \}_N $. Corollary \ref{cor:ut} gives
\begin{align*}
\lim_{ N \longrightarrow \infty } ( E \circ \tr ) ( &( U + U^t )(U+U^t)^\ast ) = \\
\lim_{ N \longrightarrow \infty } &( E \circ \tr ) (UU^\ast + U (U^t)^\ast + ( U^\ast U )^t + U^t U^\ast )
= 2\neq 0.
\end{align*}

\end{proof}

\begin{corollary}\label{cor:symm}
Suppose that $ \cA $ and $ \{ B_N \}_N $ are ensembles of
unitarily invariant random matrices having limit
distributions and the bounded cumulants property such that $
\{ B_N \}_N \subset \cA \cup \cA^t $ and let $ \displaystyle
\beta = \lim_{ N \longrightarrow \infty } E ( \tr ( B_N ) )
$.
\begin{enumerate}

\item[(\textit{i})]If $ \cA $ is closed under taking
  adjoints (i.e. $ A \in \cA $ implies $ A^\ast \in \cA $),
  then $ \{ B_N - \beta I_N \}_N $ converges to 0 in $ L^2 (
  E \circ \tr ) $.

\item[(\textit{ii})]If $ \cA $ is not closed under taking
  adjoints, then $ \{ B_N - \beta I_N \}_N $ converges to 0
  in distribution with respect to $ E \circ \tr $, but not
  necessarily in $ L^2 ( E \circ \tr ) $.

\end{enumerate}
\end{corollary}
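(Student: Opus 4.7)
The plan is to deduce Corollary \ref{cor:symm} from Lemma \ref{lemma:symm}, with the requisite asymptotic freeness supplied by Corollary \ref{cor:t-free}. As the remark preceding the statement makes clear, the setting is the one in which each $B_N$ is symmetric, so that $B_N = B_N^t$ and the ensemble $\{B_N\}_N$ sits simultaneously inside $\cA$ and inside $\cA^t$; this double location is precisely what will provide the various freeness inputs in each case.

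For part (\textit{ii}), the asymptotic freeness of $\cA$ from $\cA^t$ supplied by Corollary \ref{cor:t-free} restricts, via the two copies of $\{B_N\}_N$ sitting in $\cA$ and in $\cA^t = \{B_N^t\}_N$, to asymptotic freeness of $\{B_N\}_N$ from itself. Lemma \ref{lemma:symm}(\textit{ii}) then gives convergence of $B_N - \beta I_N$ to $0$ in distribution. For the ``not necessarily in $L^2$'' claim, one simply reuses the example worked out in the proof of Lemma \ref{lemma:symm}: for $B_N = U + U^t$ with $\cA = \{U + U^t\}_N$, which is not closed under adjoints, Corollary \ref{cor:ut} gives $\lim_{N \to \infty} E(\tr(B_N B_N^*)) = 2 \neq 0 = |\beta|^2$, so $L^2$-convergence fails.

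For part (\textit{i}), the hypothesis that $\cA$ is closed under adjoints places $B_N^*$ in $\cA$. Combining this with the symmetry identity $B_N^* = \overline{B_N} = (B_N^*)^t$ (valid because $B_N^*$, like $B_N$, is symmetric), we recognize $B_N^*$ as the transpose of an element of $\cA$, so $B_N^* \in \cA^t$ as well. Thus $\{B_N\}_N \subset \cA$ while $\{B_N^*\}_N \subset \cA^t$, and Corollary \ref{cor:t-free} yields asymptotic freeness of $\{B_N\}_N$ and $\{B_N^*\}_N$. Lemma \ref{lemma:symm}(\textit{i}) then produces the $L^2$-convergence of $B_N - \beta I_N$ to $0$.

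The only step that requires a moment's thought is the transpose/adjoint bookkeeping in case (\textit{i}): the key observation is that symmetry lets one rewrite $B_N^* = (B_N^*)^t$, whereupon the adjoint-closure of $\cA$ automatically lifts to membership of $B_N^*$ in $\cA^t$. Once that is set up, both parts of the corollary are immediate from the combination of Corollary \ref{cor:t-free} and Lemma \ref{lemma:symm}.
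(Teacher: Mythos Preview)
Your proof is correct and follows essentially the same approach as the paper: reduce to Lemma \ref{lemma:symm} via the asymptotic freeness of $\cA$ and $\cA^t$ from Corollary \ref{cor:t-free}, using the symmetry of $B_N$ to locate $B_N$ and $B_N^*$ in the right places. Your bookkeeping in part (\textit{i}) is phrased slightly differently---you use the symmetry of $B_N^*$ directly, whereas the paper first observes that $\cA^t$ is itself closed under adjoints---but the substance is identical, and your part (\textit{ii}) is actually a bit more complete, since you spell out the distribution-convergence step that the paper leaves implicit.
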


\begin{proof}

For part (i), since taking transposes and taking adjoints
commute, it follows that if $ \cA $ is closed under taking
adjoints, then so is $ \cA^t $ hence $ \{ B_N^\ast \}_N
\subset \cA^t $ is asymptotically free (according to
Corollary \ref{cor:t-free}) from $\{ B_N \}_N \subset \cA $, and
the conclusion follows from Lemma \ref{lemma:symm}
(\textit{i}).

For part (\textit{ii}), it suffices to note that the example
from the proof of Lemma \ref{lemma:symm} (\textit{ii}) is
symmetric and unitarily invariant.

\end{proof}

\begin{theorem}\label{thm:4-2}
Let $ \cA_1 $ and $ \cA_2 $ be two independent ensembles of
random matrices such that $ \cA_1 $ is unitarily invariant
and has a second order limit distribution and $ \cA_2 $ has
a second order limit $ t $-distribution.  Then the ensemble
$ \cA_1 \cup \cA_2 $ has a second order limit $ t
$-distribution.
\end{theorem}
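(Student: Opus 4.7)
The plan is to run the argument of Theorem~\ref{thm:u2a} in parallel, with Theorem~\ref{thm:4-1} playing the role of Theorem~\ref{thm:distrib} and with the unitary invariance of $\cA_1$ used to introduce auxiliary Haar unitaries via the Weingarten calculus. First I would apply the refinement of Lemma~\ref{lemma:017} established inside the proof of Theorem~\ref{thm:4-1} to write every polynomial $W$ in elements of $\cA_1 \cup \cA_1^t \cup \cA_2 \cup \cA_2^t$ as a linear combination
\[
\Tr(W) = c_0 + c_1 \Tr(W_1) + \cdots + c_m \Tr(W_m)
\]
whose coefficients $c_k$ are polynomials in expectations of normalized traces of products from $\cA_1$ or from $\cA_2 \cup \cA_2^t$, and whose $W_k$ are simplified words alternating centered polynomials of the two kinds with $\epsilon$-flags. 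Under our stronger present hypotheses these coefficients actually \emph{converge} as $N\to\infty$, so by multilinearity of the classical cumulants it suffices to verify properties (i$'$), (ii$'$), (iii$'$) of Definition~\ref{defn:1}(2) on simplified words.

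Next, using unitary invariance, I would replace each centered $A_j \in \cA_1$ appearing in such a $W_i$ by $UA_jU^*$ for an independent Haar unitary $U$, and expand $\E(\Tr(W_1)\cdots\Tr(W_R))$ exactly as in the proof of Theorem~\ref{thm:4-1} using Proposition~\ref{prop:u2}. This yields a sum over pairings $p,q \in \cP_2'(2M)$ of $\Phi_N(p,q)$ times a product of a trace-moment in $\cA_1$ (via some $\sigma_2 \in S_M$) and a trace-moment in $\cA_2 \cup \cA_2^t$ (via some $\sigma_1 \in S_M$). The estimates from Theorem~\ref{thm:4-1} show each term is $O(1)$, and as in Remark~\ref{remark:mathcalE} the dominant contributions come from pairs $(p,p)$ for which $\pi(p,p)$ is a pairing; for these the Weingarten factor $\Phi_N(p,p)$ has a finite limit (via \eqref{eq1}) and both trace-moments converge, by the second order limit distribution of $\cA_1$ and the second order limit $t$-distribution of $\cA_2$. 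This gives property (i$'$).

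For properties (ii$'$) and (iii$'$), I would follow the template of Lemma~\ref{lemma:4-preliminary_to_thm_22} essentially verbatim. The key structural fact is that the dominant quantities $\cE(p, W_1,\dots,W_R)$ factorize, up to $o(1)$, over the blocks of $p\delta p\delta \vee \overline{\gamma}$, and by Corollary~22 of~\cite{mp} each such block contains at most two cycles of $\gamma$. Thus the sum of dominant terms telescopes into the cumulant expansion indexed by $\cP_{1,2}(R)$. For (ii$'$), the covariance $\cc_2(\Tr(W_1),\Tr(W_2))$ is asymptotically the sum over those $p$ whose block structure joins the two $\gamma$-cycles, and each contribution is a product of a Weingarten factor together with trace moments in $\cA_1$ and in $\cA_2 \cup \cA_2^t$ — all of which admit limits by hypothesis. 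For (iii$'$), matching the factorization against the partitioned cumulant expansion \eqref{eq:cumulant-moment} forces all classical cumulants of order $\geq 3$ to vanish in the limit.

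The main obstacle is the combinatorial bookkeeping required to identify precisely the dominant Weingarten terms after the expansion and to recognize their factorized form as the expected cumulant sum. The ``at most two cycles of $\gamma$'' observation from~\cite[Cor.~22]{mp} is the decisive combinatorial input that kills higher cumulants, while the $t$-bounded (in fact, $t$-convergent) cumulants property of $\cA_2$ is what handles the transposes appearing in the $\cA_2^t$ factors. Apart from this bookkeeping, each step mirrors the Haar unitary case proved in Theorem~\ref{thm:u2a}.
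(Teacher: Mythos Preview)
Your overall strategy is sound and runs close to the paper's, but the paper economizes in two places where you propose to redo Weingarten bookkeeping.

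For (ii$'$), the paper does \emph{not} extract the covariance directly from the expansion of Theorem~\ref{thm:4-1}. Instead it observes that $\cA_1$, being unitarily invariant, is in particular orthogonally invariant, so Theorem~\ref{thm:8} applies and $\cA_1$, $\cA_2$ satisfy equation~(\ref{eq:02}). This immediately forces the cross covariances $k_2(\Tr(W_i),\Tr(W_j))$ with $i\le r<j$ to be $o(1)$, and for $i,j\le r$ it rewrites the covariance as a finite sum of products of normalized traces $\E(\tr(C_l D_k))$ and $\E(\tr(C_l D_k^t))$, each of which converges by the first-order limit distributions of $\cA_1\cup\cA_1^t$ (Corollary~\ref{cor:t-free}) and $\cA_2\cup\cA_2^t$. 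Your route---identifying the connected dominant pairings and checking that each $\cE$-term converges---also works, but it requires redoing the factorization argument of Lemma~\ref{lemma:4-preliminary_to_thm_22} inside the two-layer expansion of Theorem~\ref{thm:4-1} (with its separate $\sigma_1$ and $\sigma_2$), whereas the paper simply invokes the already-established real second order freeness formula.

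For (iii$'$), the paper does not rerun the template of Lemma~\ref{lemma:4-preliminary_to_thm_22}; it \emph{cites} it. After Theorem~\ref{thm:4-1} establishes the $t$-bounded cumulants property of $\cA_1\cup\cA_2$, the simplified words $W_s$ ($s\le r$) are simplified words in $\mathfrak{U}$ and an ensemble with $t$-bounded cumulants, so Lemma~\ref{lemma:4-preliminary_to_thm_22} applies verbatim and gives $k_s\to 0$ for $s\ge 3$ whenever all indices are $\le r$. The remaining cases are handled by a case split you do not make explicit: if all indices are $>r$ one uses the $\cA_2$ hypothesis directly, and if the indices are mixed one uses Proposition~\ref{prop:ord}(\textit{ii}) (the asymptotic factorization over centered versus non-centered traces). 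Your unified factorization picture would cover these too, but only after the analogue of Remark~\ref{remark:mathcalE} is stated and proved for the Theorem~\ref{thm:4-1} expansion---which the paper avoids by this three-way split.

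In short: your plan is correct but more hands-on; the paper's proof is shorter because it leans on Theorem~\ref{thm:8} for (ii$'$) and on a direct citation of Lemma~\ref{lemma:4-preliminary_to_thm_22} plus a case split for (iii$'$).
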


\begin{proof}
Since the coefficients $ c_0, c_1, \dots, c_m $ from
equation (\ref{eq:wab}) are polynomials with complex
coefficients in expectations of normalized traces of
products of elements either from $ \cA_1 $ or from $ \cA_2
\cup \cA_2^t $, it follows that under present assumptions
$\displaystyle \lim_{ N \lra \infty} c_k $ exists and it is
finite for all $ k $, hence it suffices to show that the
conditions ($ i^\prime $)-($ iii^\prime $) from Definition
\ref{defn:1}(2) hold true for $ W_1, \dots, W_R $ as in the
proof of Theorem \ref{thm:4-1}.  More precisely, For $ r
\leq R $ and $ 1 \leq t \leq R - r $,
\[
 W_{r + t } = B_{ M + t } 
\]
 with $ B_{M + t } $ a centered polynomial in elements from $ \cA_2 $.
For $ 1 \leq s \leq r $, 
\[
 W_s = ( U A_{ M_{ s - 1 } + 1 } U^\ast ) ^{ \epsilon ({ M_{ s - 1 } + 1
   } ) } B_{ M_{ s - 1 } + 1 }  \cdots 
( U A_{ M_s }U^\ast ) ^{ \epsilon( M_s ) } B_{ M_s }
\]
where given positive integers $ l_1, l_2, \dots,
l_r $, we set $ M_0 = 0 $, $ M_s = M_{ s - 1 } + l_s $, $
M_r = M $, and  $ \varepsilon_i \in \{ 1, -1 \} $, each $ A_i $ are
centered polynomials of elements from $ \cA_1$ and each $
B_i$ is either a centered polynomial of elements from $
\cA_2 $ or an identity matrix, but $ B _i $
can be an identity matrix only if $ \epsilon_i = -
\epsilon_{i + 1}$ , if $ i < M_s $, or if $ \epsilon_{M_{
s-1} + 1 } =- \epsilon_{M_s}$, if $ i = M_s $.

Property ($i^\prime$) is an immediate consequence of
equation (\ref{eq:028}), since the hypotheses of Theorem
\ref{thm:4-1} are satisfied.

Since the ensemble $ \cA_1 $ is unitarily invariant, it is
also orthogonally invariant, so, from Theorem \ref{thm:8},
the ensembles $ \cA_1 $ and $ \cA_2 $ satisfy equation
(\ref{eq:02}), hence $ k_2 ( \E ( \Tr(W_i ), \E( \Tr (W_j )
) = o(1 ) $ whenever $ i \leq r < j $. The ensemble $ \cA_2
$ has a second order limit $ t $-distribution, so property
($ii^\prime$) is proved if we show that $ \displaystyle
\lim_{ N \lra \infty } k_2 ( \E ( \Tr(W_i ), \E( \Tr (W_j )
) $ exists and it is finite for $ i, j \leq r $.  From
Corollary \ref{cor:t-free}, the ensembles $ \cA_1 $ and $
\cA_1^t $ are asymptotically free, so an alternating product
of centered polynomials in elements from $ \cA_1$,
respectively $ \cA_1^t $ is asymptotically centered, so we
can suppose that $ W_i = C_1 C_2 \cdots C_m $ and $ W_j =
D_1D_2 \cdots D_n $ where if $ k, p $ are odd then $ C_k $
and $ D_p $ are centered polynomials in elements from $
\cA_1 \cup \cA_1^t $ and if $ k, p $ are even, then $ C_k $
and $ D_p $ are centered polynomials in elements from $
\cA_2 \cup \cA_2^t $.  Equation (\ref{eq:02}) gives then:
\begin{align*}
\lim_{ N \lra \infty}  k_2 &( \E ( \Tr(W_i ), \E( \Tr (W_j ) ) - \\
& \delta_{m, n} \cdot 
\sum_{ k =1}^n [ \prod_{ l =1}^n \E(\tr( C_l  \cdot
  D_{ k -  l } )) + \prod_{ l  =1}^n \E(\tr( C_l \cdot D_{ l  - k }^t
  )) ] \} = 0
\end{align*}
therefore property ($ii^\prime $) from Definition
\ref{defn:1}(2) is proved if we show that $ \displaystyle
\lim_{ N \lra \infty } \tr ( C_k D_l ) $ exists and it is
finite for all $ k, l $.  If $ k $ and $ l $ have the same
parity, the limit exists since from the hypothesis,
respectively Corollary \ref{cor:t-free}, the ensembles $
\cA_2 \cup \cA_2^ t$ and $ \cA_1 \cup \cA_1^t $ have limit
distributions. If $ k $ and $ l $ have different parities,
then the unitary invariance of $ \cA_1 $ gives (see, for
example \cite{mss}) that $ \cA_1 \cup \cA_1^t $ and $
\cA_2\cup\cA_2^t $ are asymptotically free, so the limit is
zero.

For property ($iii^\prime$), we need to show that
\begin{equation} \label{eq:4k3}
\lim_{N\lra \infty } k_3 ( \Tr( W_{ i_1} ), \dots, \Tr( W_{ i_s } ) = 0
\end{equation}
 whenever $ s \geq 3 $ and $ 1 \leq i_k \leq R $.

If the set $\{ i_1, \dots, i_s \} $ contains both elements
less than $ r $ and greater than $ r $, note first that,
from Theorem \ref{thm:4-1}, the ensemble $ \cA_1 \cup \cA_2
$ has a second order bounded $ t $-distribution. Therefore
we can apply Proposition \ref{prop:ord}($ii$) to $ W_1,
\dots, W_R $, which implies (\ref{eq:4k3}).  If $ i_k \leq r
$ for all $ k $, then (\ref{eq:4k3}) is given by Lemma
\ref{lemma:4-preliminary_to_thm_22}.  Finally, if $ i_k > r
$ for all $ k $, then (\ref{eq:4k3}) is holds true since $
\cA_2 $ has a second order limit $ t $-distribution.
\end{proof}

Since ensembles of scalar matrices matrices have a second
order limit $ t $-distribution, Theorem \ref{thm:4-2}
implies the following corollary.
\begin{corollary}\label{cor:19}
Let $ \cA $ be a unitarily invariant ensemble of random matrices. 
If $ \cA $ has a second order limit distribution,
 then it
has a second order limit $ t$-distribution.
\end{corollary}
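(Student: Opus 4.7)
The plan is to apply Theorem \ref{thm:4-2} with $\cA_1 = \cA$ and with $\cA_2$ taken to be the trivial ensemble of scalar matrices, for instance $\cA_2 = \{I_N\}_N$ (one could equally well allow finitely many scalar generators). First I would verify that this choice of $\cA_2$ has a second order limit $t$-distribution. Since $I_N^t = I_N$, we have $\cA_2^t = \cA_2$, so any polynomial $W_{N,i}$ in the generators of $\cA_2 \cup \cA_2^t$ is a deterministic scalar multiple of $I_N$. Thus $\Tr(W_{N,i}) = c_i N$ for some $c_i \in \bC$ independent of $N$, which is a deterministic quantity. Consequently $\E(\tr(W_{N,i})) = c_i$ converges trivially, and for every $r \geq 2$ we have $\cc_r(\Tr(W_{N,1}),\dots,\Tr(W_{N,r})) = 0$ identically. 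This verifies conditions ($i'$), ($ii'$), ($iii'$) of Definition \ref{defn:1}(2) for the ensemble $\cA_2 \cup \cA_2^t = \cA_2$.

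The remaining hypotheses of Theorem \ref{thm:4-2} hold by assumption: $\cA_1 = \cA$ is unitarily invariant and has a second order limit distribution, and $\cA_2$, being deterministic, is automatically independent of $\cA_1$. Theorem \ref{thm:4-2} therefore yields that $\cA \cup \cA_2$ has a second order limit $t$-distribution. Since every polynomial in elements of $\cA \cup \cA^t$ is in particular a polynomial in elements of $(\cA \cup \cA_2) \cup (\cA \cup \cA_2)^t$, conditions ($i'$)--($iii'$) restrict from $\cA \cup \cA_2$ to $\cA$, which is the desired conclusion. There is no genuine obstacle; the corollary is really just a specialization of Theorem \ref{thm:4-2}, and the only point to check is the entirely routine observation that scalar matrices form a trivial example of an ensemble with a second order limit $t$-distribution.
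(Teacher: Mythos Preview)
Your proposal is correct and follows exactly the approach indicated in the paper: specialize Theorem~\ref{thm:4-2} by taking $\cA_2$ to be the trivial ensemble of scalar matrices, which obviously has a second order limit $t$-distribution and is independent of $\cA$. Your added remark that the conclusion for $\cA \cup \cA_2$ restricts to $\cA$ is a small clarification the paper leaves implicit.
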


\begin{remark} 
Let $ \cA $ be an ensemble with a second order limit
distribution. The ensemble
\[
\widetilde{\cA} = \{ \int_{\mathcal{U}(N ) } U A U^\ast
d\,\mathcal{U}_N(U) : A \in \cA \}
\]
is unitarily invariant (from the properties of the Haar
measure) and has a second order limit distribution (since $
\cA $ has a second order limit distribution). Hence,
Corollary \ref{cor:19} implies that $ \widetilde{ \cA } $
has a second order limit $ t $-distribution.
\end{remark}

Since orthogonal matrices are unitary, any unitarily invariant
ensemble is also orthogonally invariant.  From Corollary
\ref{cor:19}, a unitarily invariant ensemble with a second
order limit distribution also has a second order limit $ t
$-distribution, hence Theorem \ref{thm:8} and Corollary
\ref{cor:19} implies the following

\begin{corollary} 
If $ \cA_1 $ and $ \cA_2 $ are two independent ensembles of
random matrices such that $ \cA_1 $ is unitarily invariant and
has a second order limit distribution and $ \cA_2 $ has a
second order limit $ t $-distribution, then $ \cA_1$ and
$\cA_2 $ are asymptotically real second order free.
\end{corollary}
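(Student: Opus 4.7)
The plan is to reduce the statement to Theorem \ref{thm:8}\,(\textit{ii}) via the averaging trick built into the unitary invariance of $\cA_1$. First I would apply Corollary \ref{cor:19} to upgrade the hypothesis on $\cA_1$: since $\cA_1$ is unitarily invariant and has a second order limit distribution, Corollary \ref{cor:19} gives that $\cA_1$ in fact has a second order limit $t$-distribution. Together with the assumption on $\cA_2$, both ensembles now meet the hypothesis of Theorem \ref{thm:8}\,(\textit{ii}).

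Next, I would introduce an auxiliary ensemble $\mathcal{O}=\{O_N\}_N$ of $N\times N$ Haar distributed orthogonal random matrices, chosen independent from both $\cA_1$ and $\cA_2$. Set $\widetilde{\cA_1} = \{O A O^t : A \in \cA_1\}$. By Theorem \ref{thm:8}\,(\textit{ii}), since $\cA_1$ and $\cA_2$ both have second order limit $t$-distributions, $\widetilde{\cA_1}$ and $\cA_2$ are asymptotically real second order free.

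The last step is to transfer this freeness back from $\widetilde{\cA_1}$ to $\cA_1$. Because $O_N$ is orthogonal, $O^t=O^{-1}$, and because every orthogonal matrix is unitary, the unitary invariance of $\cA_1$ gives that, for each fixed realization of $O$, the ensemble $\{OAO^{-1} : A\in\cA_1\}$ has the same joint distribution as $\cA_1$. Combined with the independence of $\mathcal{O}$ from $\cA_2$ and of $\cA_1$ from $\cA_2$, this means that the joint distribution of $(\widetilde{\cA_1}, \cA_2)$ coincides with that of $(\cA_1,\cA_2)$. In particular all mixed moments, fluctuation moments, and transposed versions thereof agree, so the real second order asymptotic freeness of $\widetilde{\cA_1}$ and $\cA_2$ passes to $\cA_1$ and $\cA_2$.

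I do not expect a genuine obstacle here; the only point that requires care is the transfer step, where one must verify that the joint distribution that enters the definition of real second order freeness (which involves not just $\cA_1\cup\cA_2$ but also $\cA_1^t\cup\cA_2^t$) is truly preserved under $A\mapsto OAO^t$. This is immediate because transposition commutes with conjugation by an orthogonal matrix, $(OAO^t)^t=OA^tO^t$, so the ensemble $\widetilde{\cA_1}\cup\widetilde{\cA_1}^t$ is the conjugate of $\cA_1\cup\cA_1^t$ by the same $O$, and unitary invariance of $\cA_1$ propagates in the expected way.
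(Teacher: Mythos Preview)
Your proposal is correct and follows essentially the same route as the paper: the paper simply remarks that unitary invariance implies orthogonal invariance, invokes Corollary~\ref{cor:19} to upgrade $\cA_1$ to a second order limit $t$-distribution, and then appeals to Theorem~\ref{thm:8}. You have spelled out the transfer step (that $(\widetilde{\cA_1},\cA_2)$ and $(\cA_1,\cA_2)$ have the same joint $t$-distribution because $(OAO^t)^t=OA^tO^t$ and $\cA_1$ is orthogonally invariant) which the paper leaves implicit.
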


\begin{remark}\label{rem:23}
Some relevant classes of unitarily invariant ensembles of
random matrices (namely complex Gaussian and Wishart) are
describ\-ed in \cite{me}, \cite{ms}, \cite{mss}. Moreover, in
\cite{mss} it is shown that a unitarily invariant ensemble of
random matrices with a second order limit distribution is
asymptotically \emph{complex} second order free from any
independent ensemble of random matrices with a second order
limit distribution. This result, together with Corollary 21
imply that unitarily invariant ensembles of random matrices
with a second order limit distribution are \emph{both real
  and complex} second order free from any independent
ensemble of random matrices with a second order limit
distribution (see also \cite{mp} Section 10). Particularly,
the following relation (under a weaker form also mentioned
in \cite{mp}, Section 10) holds true.

\begin{proposition}
If $ \cA = \{ A_N, B_N \}_N $ is an ensemble of random
matrices such that $ \displaystyle \sup_{ N \rightarrow
  \infty} | \E ( \Tr(A B ) ) | < \infty $ and $
\displaystyle \sup_{ N \rightarrow \infty} | \E ( \Tr (A)
\Tr( B ) ) |< \infty $, and $ \{ U_N \}_N $ is an ensemble
of Haar unitary matrices independent from $ \cA $, then
\[ \displaystyle \lim_{ N
  \rightarrow \infty } \E \left( \tr ( U A U^\ast ( U B
U^\ast )^t ) \right) = 0. \]
\end{proposition}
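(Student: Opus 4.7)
The plan is a direct computation using the Weingarten formula of Theorem~\ref{thm:unit1}; no abstract second-order machinery is required. First I would expand the trace in matrix entries. Using $(UBU^*)^t_{j,i} = (UBU^*)_{i,j} = \sum_{k,l} u_{i,k}\, b_{k,l}\, \overline{u_{j,l}}$, we obtain
\[
\tr\bigl(UAU^*(UBU^*)^t\bigr)
= \frac{1}{N}\sum_{i,j,p,q,k,l} u_{i,p}\, a_{p,q}\, \overline{u_{j,q}}\, u_{i,k}\, b_{k,l}\, \overline{u_{j,l}}.
\]
Independence of $U$ from $A,B$ reduces the problem to evaluating $\E\bigl(u_{i,p}\, u_{i,k}\, \overline{u_{j,q}}\,\overline{u_{j,l}}\bigr)$ at each choice of the six indices.

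Second I would apply Theorem~\ref{thm:unit1} with $n=2$. Reading off the row-vectors $\bi=(i,i)$, $\bi'=(j,j)$ and column-vectors $\bj=(p,k)$, $\bj'=(q,l)$, the row constraint $\delta^{\bi}_{\bi'\circ\sigma}$ collapses to $\delta_{i,j}$ for every $\sigma\in S_2$, while the column constraint yields $\delta_{p,q}\delta_{k,l}$ when $\tau=e$ and $\delta_{p,l}\delta_{k,q}$ when $\tau=(1,2)$. Summing the four resulting terms and using
\[
\Wg_N(e) + \Wg_N\bigl((1,2)\bigr) = \frac{1}{N^2-1} - \frac{1}{N(N^2-1)} = \frac{1}{N(N+1)},
\]
one obtains
\[
\E\bigl(u_{i,p}\, u_{i,k}\,\overline{u_{j,q}}\,\overline{u_{j,l}}\bigr)
= \frac{\delta_{i,j}}{N(N+1)}\bigl(\delta_{p,q}\delta_{k,l}+\delta_{p,l}\delta_{k,q}\bigr).
\]

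Third I would substitute this back and perform the contractions. The factor $\delta_{i,j}$ collapses the double sum over $i,j$ to a single one contributing a factor $N$, and the two $\delta$-patterns pick out exactly $\Tr(A)\Tr(B)$ and $\Tr(AB)$. The outcome is
\[
\E\bigl(\tr(UAU^*(UBU^*)^t)\bigr)
= \frac{1}{N(N+1)}\Bigl(\E\bigl(\Tr(A)\Tr(B)\bigr) + \E\bigl(\Tr(AB)\bigr)\Bigr).
\]
Both expectations on the right are uniformly bounded in $N$ by hypothesis, so the left-hand side is $O(N^{-2})$ and in particular tends to zero.

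There is no substantial obstacle; the only step requiring care is correctly assigning each of the four unitary factors $u,u,\overline{u},\overline{u}$ to its position in the vectors $\bi,\bi',\bj,\bj'$ of Theorem~\ref{thm:unit1}, after which the argument is mechanical.
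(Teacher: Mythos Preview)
Your proof is correct and follows essentially the same route as the paper: expand the trace in matrix entries, apply the Weingarten formula of Theorem~\ref{thm:unit1} with $n=2$, and contract the resulting Kronecker deltas against $\E(a_{pq}b_{kl})$ to recover $\E(\Tr(A)\Tr(B))$ and $\E(\Tr(AB))$. The only difference is cosmetic: the paper treats the two column-constraint cases separately and bounds each contribution by $O(N^{-1})$, whereas you combine the four $(\sigma,\tau)$ terms to obtain the exact closed formula
\[
\E\bigl(\tr(UAU^*(UBU^*)^t)\bigr)=\frac{1}{N(N+1)}\bigl(\E(\Tr(A)\Tr(B))+\E(\Tr(AB))\bigr),
\]
which yields the sharper $O(N^{-2})$ decay.
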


\begin{proof}
Denote $ A = ( a_{ i, j } )_{i,j = 1}^N $ and $ B = ( b_{ i,
  j })_{ i,j = 1}^N $. Then
\begin{eqnarray*} \lefteqn{
\E ( \tr ( U A U^\ast ( U B U^\ast )^t )) } \\
& = &
 \frac{1}{N} \sum_{ \substack{ 1 \leq i_k \leq N\\ 1 \leq | k | \leq 4 } }
\E(
u_{i_1, i_{-1 }} a_{ i_{ -1 } , i_{ -2 } } \overline{ u_{ i_2, i_{ -2 } } } 
u_{i_3, i_{-3 }} b_{ i_{ -3 } , i_{ -4 } } \overline{ u_{ i_4, i_{ -4 } } } )\\
& = &
 \frac{1}{N} \sum_{ \substack{ 1 \leq i_k \leq N\\ 1 \leq | k | \leq 4 } }
\E(a_{ i_{ -1 } , i_{ -2 } } b_{ i_{ -3 } , i_{ -4 } } )
\cdot
 \E ( u_{i_1, i_{-1 }}\cdot  u_{i_3, i_{-3 }} \cdot \overline{ u_{ i_2, i_{ -2 } } } \cdot \overline{ u_{ i_4, i_{ -4 } } } ).
\end{eqnarray*}

From Theorem \ref{thm:unit1}, the right-hand side summand
cancels unless\\ $( i_{ -1}, i_{ -3} ) \in \{ ( i_{ -2} ,
i_{ -4 } ), ( i_{ -4 }, i_{ -2} ) \} $. If $( i_{ -1}, i_{
  -3} ) = ( i_{ -2} , i_{ -4 } ) $, then from Remark
\ref{rem:ord} and Theorem \ref{thm:unit1}, the right-hand
side of the above equation equals
\begin{align*}
\frac{1}{N}
\E \left(\Tr(A)\Tr(B) \right )
 \sum_{ \substack { 1 \leq i_k \leq N\\ 1 \leq | k | \leq 4 } }
 \E ( u_{ i_1, j } \cdot  u_{ i_3, k }  \overline { u_{ i_2, j } } 
\cdot \overline { u_{ i_4, k } } ) = O( N^{ -1 } ).
\end{align*}
Similarly, if $( i_{ -1}, i_{ -3} ) = ( i_{ - 4 } , i_{ - 2
} ) $, then the summation equals
\begin{align*}
\frac{1}{N}
\E \left(\text{Tr}(AB) \right )
 \sum_{ \substack { 1 \leq i_k \leq N\\ 1 \leq | k | \leq 4 } }
 \E ( u_{ i_1, j } \cdot  u_{ i_3, k }  \overline { u_{ i_2, j } } \cdot \overline { u_{ i_4, k } } ) = O( N^{ -1 } ).
\end{align*}
\end{proof}

\begin{proposition}\label{prop:31}
Suppose that $ \cA $ is a unitarily-invariant ensemble of
random matrices that have a second order limit $ t
$-distribution.  Then $ \cA $ and $ \cA^t = \{ A^t : \ A \in
\cA \} $ are asymptotically real second order free.

In particular, if $ U_N $ is a $ N \times N $ Haar unitary
random matrix, then $ \{ U_N, U_N^\ast \}_N $ and $ \{ U_N^t
, \overline{U}_N \} $ are asymptotically real second order free.
\end{proposition}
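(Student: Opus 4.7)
The plan is to reduce the ``In particular'' case to the general assertion and then decouple $\cA$ from $\cA^t$ via an auxiliary Haar unitary. First, the ``In particular'' statement follows from the main claim by taking $\cA = \{U_N, U_N^*\}_N$, which is unitarily invariant (since for any fixed unitary $V$ the pair $(VUV^*, VU^*V^*) = (VUV^*, (VUV^*)^*)$ has the same law as $(U,U^*)$) and which has a second order limit $t$-distribution by Corollary~\ref{cor:dut}. For the main claim, Corollary~\ref{cor:t-free} already provides first-order asymptotic freeness of $\cA$ and $\cA^t$, and by hypothesis $\cA \cup \cA^t$ has a second order limit distribution; so the only remaining task is to check the real second order freeness identity (\ref{eq:02}) for cyclically alternating centered tuples $(C_1,\dots,C_m)$, $(D_1,\dots,D_n)$ taken alternately from $\cA$ and $\cA^t$.

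The key step is a coupling trick via an auxiliary Haar unitary. Let $U = U_N$ be a Haar distributed unitary random matrix independent of $\cA$. By the unitary invariance of $\cA$, the joint distribution of $(\cA,\cA^t)$ is identical to that of $(U\cA U^*, (U\cA U^*)^t) = (U\cA U^*, \overline{U}\cA^t U^t)$. Thus it suffices to verify (\ref{eq:02}) after replacing each $C_i$ by $\widetilde C_i$ and each $D_j$ by $\widetilde D_j$, where $\widetilde C_i$ (resp.\ $\widetilde D_j$) is obtained by substituting $UAU^*$ for every letter $A \in \cA$ (resp.\ $\overline{U}A^t U^t$ for every letter $A^t \in \cA^t$) appearing in $C_i$ (resp.\ $D_j$). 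The resulting $\widetilde C_i$'s and $\widetilde D_j$'s are polynomials in entries of $\mathfrak{U} = \{U, U^*, U^t, \overline{U}\}$ and of $\cA \cup \cA^t$, and the covariance $\cov(\Tr(\widetilde C_1 \cdots \widetilde C_m), \Tr(\widetilde D_1 \cdots \widetilde D_n))$ is a joint second-order moment of $\mathfrak{U} \cup \cA \cup \cA^t$.

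Theorem~\ref{thm:u2a} asserts that $\mathfrak{U}$ and $\cA$ are asymptotically real second order free, so this joint second-order moment is asymptotically computable: the first-order moments of $\mathfrak{U}$ alone are explicit (Remark~\ref{remark:free}), and the $t$-first-order moments of $\cA$ determine all the remaining ingredients. Concretely, I would expand the relevant expectations via the pairing Weingarten formula (Proposition~\ref{prop:u2}) and argue, as in the proof of Theorem~\ref{thm:distrib} and specifically as in the reasoning around equation (\ref{eq:mathcalE2}), that by the estimate $\Phi_N(p,q) = O(N^{-M + \#(pq)/2})$ together with Proposition~\ref{prop:ord}, only those pairings $p$ for which $\pi(p,p)$ is itself a pair-partition can contribute to the $O(1)$ part.

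The final, and most delicate, step is to match these surviving pairings with the two families of spoke diagrams appearing on the right-hand side of (\ref{eq:realsecondorderfreeness}). A leading pairing that connects the Haar-unitary letters of the $\widetilde C$-word to those of the $\widetilde D$-word consistently with the cyclic order of the $\widetilde D_j$'s will factorize into a ``straight spoke'' product $\prod_i \E(\tr(C_i D_{k-i}))$, while a leading pairing that connects them in the reverse cyclic order will factorize into a ``reversed spoke'' product $\prod_i \E(\tr(C_i D_{i-k}^t))$. Summing over all leading pairings should then exactly reproduce the right-hand side of (\ref{eq:02}). I expect the main obstacle to be precisely this combinatorial bookkeeping: exhibiting an explicit bijection between leading Weingarten pairings and the union of the two spoke families, and verifying that every other pairing contributes $o(1)$. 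This is the fluctuation-level refinement of the first-order analysis underlying Corollary~\ref{cor:t-free}.
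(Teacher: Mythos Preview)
Your overall strategy---introduce an auxiliary Haar unitary $U$ independent of $\cA$, use unitary invariance to replace $\cA$ by $U\cA U^*$, and then invoke Theorem~\ref{thm:u2a}---is exactly the paper's. The ``in particular'' reduction and the appeal to Corollary~\ref{cor:t-free} for first order are also the same.

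Where you diverge is in the final step. After the substitution, the product $\widetilde C_1\widetilde C_2\cdots\widetilde C_m$ is
\[
U C_1 U^*\,\overline{U}\,C_2\,U^t\,U\,C_3\,U^*\cdots
\;=\;
C_1\,(U^tU)^*\,C_2\,(U^tU)\,C_3\,\cdots
\]
(up to a cyclic rotation of the trace), and similarly for the $D$'s. The paper simply observes that, setting $V=U^tU$, this is already a \emph{cyclically alternating product of centered elements} from $\mathfrak U$ (the factors $V$, $V^*$, centered by Remark~\ref{remark:free}) and from $\cA\cup\cA^t$ (the $C_i$'s). Hence Theorem~\ref{thm:u2a} gives the spoke formula~(\ref{eq:02}) \emph{directly} for this longer alternating word. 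In each spoke product the $V$-factors pair among themselves; Remark~\ref{remark:free} gives $\lim\E(\tr(V V^*))=1$, $\lim\E(\tr(V\cdot V))=\lim\E(\tr(V\cdot V^t))=0$, etc., so the only surviving spoke configurations are precisely those that induce the two families of spoke diagrams on the $C_i$'s and $D_j$'s required by~(\ref{eq:02}) for $\cA$ versus $\cA^t$. No Weingarten expansion is needed.

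Your proposal to re-enter the Weingarten calculus and classify leading pairings by hand would eventually work, but it reproves from scratch what Theorem~\ref{thm:u2a} already packages. The piece you flag as the ``main obstacle''---the bijection between leading pairings and spoke diagrams---is avoided entirely by the reparsing above. So your argument is not wrong, but it is incomplete as written and takes a much harder route than necessary; the missing ingredient is recognizing that the conjugated product is itself in alternating form for $\mathfrak U$ versus $\cA\cup\cA^t$.
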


\begin{proof}
The asymptotic free independence of the ensembles $ \cA $
and $ \cA^t $ is shown in Corollary \ref{cor:t-free}.

Let $ \{ U_N \}_N $ be an ensemble of Haar unitary random
matrices independent from $ \cA $ (hence from $ \cA^t $) and
denote by $ \mathfrak{U} $ the ensemble $\{ U_N, U_N^t,
U_N^\ast, \overline{U}_N \}_N $.

Since $ \cA $ is unitarily invariant, any cumulant of traces
of products of elements from $ \cA $ and $ \cA^t $ equals a
cumulant of same order of traces of products of elements
from $ \cA $, $ \cA^t $ and $ \mathfrak{U} $, which vanishes
asymptotically from Theorem \ref{thm:u2a} (i.e. from the
asymptotic real second order freeness of $ \mathfrak{U} $
and $ \cA\cup \cA^t $).

It remains to prove that $ \cA $ and $ \cA^t $ satisfy
equation (\ref{eq:02}). Let $ A_1, \dots, \ab A_n $ and $
B_1, \dots, B_m $ be centered random matrices such that if $
k $ is odd then $ A_k, B_k \in \cA $ and if $ k $ is even
then $ A_k, B_k \in \cA^t $. Then
\begin{align*}
\Tr(A_1 A_2 \cdots A_n ) & = \Tr( U A_1 U^\ast ( U^\ast )^t
A_2 U^t U A_3 \cdots )\\
& = 
\Tr (A_1 ( U^t U )^\ast A_2 U^t U A_3 \cdots )
\end{align*}
where the last factors of the last product above are $ ( U^t U
)^\ast A_n $ if $ n $ is odd, respectively $ A_n U^t U $ if
$ n $ is even.

Since $ E( \tr ( U^t U ) ) = 0 $ and $ E ( \tr ( U^t U ( U^t
U )^\ast ) ) = 1 $, applying again Theorem \ref{thm:u2a} we
obtain
\begin{align*}
\lim_{ N \rightarrow\infty} 
\Big\{\text{cov}(\Tr(A_1 &  A_2   \cdots A_n ), \Tr( B_1, B_2 \cdots B_m) )\ 
-
\\
& \delta_{m, n}\cdot \sum_{ k =1}^n  
\Big[ 
 \prod_{ i =1}^n \tr( A_i \cdot B_{ k -i } ) + \prod_{ i =1}^n \tr( A_i\cdot  B_{ i - k }^t ) 
 \Big] \Big\} =0
\end{align*}
hence the conclusion.

Moreover, from Remark \ref{rem:23}, $ \displaystyle \lim_{ N
  \rightarrow \infty} \tr( A_l \cdot B_{ k } ) = 0 $ for all
$ l $ and $ k $, therefore $ \cA $ and $ \cA^t $ are
\emph{not complex second order free}.
\end{proof}

\emph{It is an open question at this moment if there exist $
  \cA $, $ \cB $ two ensembles of random matrices with a
  second order limit $t$-distribution such that $ \cA $ and
  $\cB $ are asymptotically complex second order free, but
  not real second order free.}
\end{remark}


\begin{thebibliography}{XXX}

\bibitem[{\sc agz}]{agz} G. Anderson, A. Guionnet,
  O. Zeitouni, \textit{An Introduction to Random Matrices},
  Cambridge Univ. Press, 2010.

\bibitem[{\sc b}]{biane} P. Biane, Representations of
  symmetric groups and free probability,
  \textit{Adv. Math.}, \textbf{138} (1998), 126-181.

\bibitem[{\sc cm}]{cm} B.~Collins and S.~Matsumoto, On some
  properties of Orthogonal Weingarten functions,
  \textit{J. Math. Phy.} \textbf{50} (2009), 113515.
 
\bibitem[{\sc cs}]{cs} B. Collins and P. \'Sniady,
  Integration with Respect to the Haar Measure on Unitary,
  Orthogonal, and Symplectic Group,
  \textit{Comm. Math. Phy.}, \textbf{264} (2006), 773-795.

\bibitem[{\sc d$_1$}]{d1}F.J. Dyson, Statistical theory of
  the energy levels of complex
  systems. II. \emph{J. Math. Phys.} \textbf{3} (1962),
  157-165.

\bibitem[{\sc d$_2$}]{d2}F.J. Dyson, Statistical theory of
  the energy levels of complex
  systems. III. \emph{J. Math. Phys.} \textbf{3} (1962),
  166-172.

\bibitem[{\sc m}]{me}M. L.  Mehta, \textit{Random Matrices}
  2nd ed., Academic Press, 1991.

\bibitem[{\sc mn}]{mn}J. A. Mingo, A. Nica, Annular
  noncrossing permutations and partitions, and second-order
  asymptotics for random
  matrices. \emph{Int. Math. Res. Not.} \textbf{2004},
  no. 28, 1413-1460.

\bibitem[{\sc mp}]{mp}J. A. Mingo, M. Popa, Real second
  order freeness and Haar orthogonal matrices,
  \textit{J. Math. Phys.} \textbf{54} (2013), no. 5, 051701,
  35 pp.

\bibitem[{\sc mss}]{mss} J. A. Mingo, R. Speicher,
  P. Sniady, Second Order Freeness and Fluctuations of
  Random Matrices: II. Unitary Random Matrices,
  \emph{Adv. Math.} \textbf{209} (2007), no. 1,
  212--240.

\bibitem[{\sc ms}]{ms} J. A. Mingo, R. Speicher, Second
  Order Freeness and Fluctuations of Random Matrices:
  I. Gaussian and Wishart matrices and Cyclic Fock spaces
  \emph{J. Funct. Anal.} \textbf{235} (2006), no. 1, 226-270.

\bibitem[{\sc mst}]{mst} J. A. Mingo, R. Speicher, and
  E. Tan, Second Order Cumulants of Products,
  \textit{Trans. Amer. Math. Soc.}, \textbf{361}, (2009),
  4751-4781.

\bibitem[{\sc ns}]{ns} A. Nica and R. Speicher,
  \textit{Lectures on the Combinatorics of Free
    Probability}, Cambridge Univ. Press, 2006.

\bibitem[{\sc r}]{r} C. E. I. Redelmeier, Real second-order
  freeness and the asymptotic real second-order freeness of
  several real matrix ensembles,
  \textit{Int. Math. Res. Not.}  \textbf{2014}, no. 12,
  pp. 3353-3395.


\bibitem[{\sc v$_1$}]{v1} D.-V. Voiculescu, Limit laws for
  random matrices and free products, \textit{Invent. Math.}
  \textbf{104} (1991), no. 1, 201-220.

\bibitem[{\sc v$_2$}]{v2} D.-V. Voiculescu, A strengthened
  asymptotic freeness result for random matrices with
  applications to free entropy,
  \textit{Int. Math. Res. Not.}, \textbf{1998}, no. 1,
  41-63.


\bibitem[{\sc vdn}]{vdn} D.-V. Voiculescu, K. J. Dykema,
  A. Nica, \textit{Free random variables. A non-commutative
    probability approach to free products with applications
    to random matrices, operator algebras and harmonic
    analysis on free groups}. CRM Monograph Series, vol. 1,
  Amer. Math. Soc., 1992.

\end{thebibliography}
\end{document}